\documentclass[english,11pt,a4paper]{article}

\usepackage{a4wide}
\usepackage{amsmath,amsthm,epsfig,amssymb,amsbsy}
\usepackage{enumerate}
\usepackage{comment}
\usepackage{algorithm}
\usepackage{algorithmic}
\usepackage{amsfonts}       
\usepackage{dsfont}
\usepackage{enumitem}
\usepackage{mathtools}

%

\usepackage{pgfplots}
\usepackage{subfig}


\newenvironment{keywords}{\begin{paragraph}{Keywords:}
}
{
\end{paragraph}
}
    
\newenvironment{subclass}{\begin{paragraph}{AMS Subject Classification:}
}
{\end{paragraph}
}

\DeclareMathOperator*{\argmin}{arg\,min}
\DeclareMathOperator*{\epsargmin}{{\varepsilon}\text{--}arg\,min}
\DeclareMathOperator*{\argmax}{arg\,max}

\DeclareMathOperator{\infconv}{\mathbin{\square}}
\DeclareMathOperator{\infdeconv}{\boxminus}

\DeclareMathOperator{\gph}{gph}

\DeclareMathOperator{\SumExp}{SumExp}
\DeclareMathOperator{\Exp}{Exp}
\DeclareMathOperator{\Log}{Log}
\DeclareMathOperator{\dom}{dom}
\DeclareMathOperator{\diag}{Diag}

\DeclareMathOperator{\intr}{int}
\DeclareMathOperator{\bdry}{bdry}

\DeclareMathOperator{\cl}{cl}

\DeclareMathOperator{\ran}{rge}

\DeclareMathOperator{\sign}{sign}
\DeclareMathOperator{\relint}{ri}

\DeclareMathOperator{\id}{id}

\DeclareMathOperator{\logsumexp}{LogSumExp}
\DeclareMathOperator{\vecmax}{vecmax}
\DeclareMathOperator{\Li}{Li}

\newcommand{\bR}{\mathbb{R}}

\newcommand{\bN}{\mathbb{N}}

\newcommand{\exR}{\overline{\mathbb{R}}}

\newcommand{\cC}{\mathcal{C}}

\makeatletter
\newcommand{\aprox}[3][\@nil]{%
  \def\tmp{#1}%
   \ifx\tmp\@nnil
       \operatorname{aprox}_{#3}^{#2}
    \else
         \operatorname{aprox}_{#3}^{#1 \star #2}
    \fi}

\newcommand{\bprox}[3][\@nil]{%
  \def\tmp{#1}%
   \ifx\tmp\@nnil
       \operatorname{bprox}_{#3}^{#2}
    \else
         \operatorname{bprox}_{#3}^{#1 #2}
    \fi}

\makeatother

\newcommand{\gap}[2]{\mathcal{G}_{#2}^{#1}}

\def\ifsvjour{false}
\def\true{true}

\usepackage{hyperref}
\hypersetup{
    colorlinks=true,
    linkcolor=blue,
    filecolor=magenta,
    citecolor =magenta,  
    urlcolor=magenta,
    pdftitle={Anisotropic Proximal Gradient}
    }
\usepackage[capitalize]{cleveref}[0.19]

\crefname{section}{section}{sections}
\crefname{subsection}{subsection}{subsections}
\Crefname{section}{Section}{Sections}
\Crefname{subsection}{Subsection}{Subsections}

\Crefname{figure}{Figure}{Figures}

\crefformat{equation}{\textup{#2(#1)#3}}
\crefrangeformat{equation}{\textup{#3(#1)#4--#5(#2)#6}}
\crefmultiformat{equation}{\textup{#2(#1)#3}}{ and \textup{#2(#1)#3}}
{, \textup{#2(#1)#3}}{, and \textup{#2(#1)#3}}
\crefrangemultiformat{equation}{\textup{#3(#1)#4--#5(#2)#6}}%
{ and \textup{#3(#1)#4--#5(#2)#6}}{, \textup{#3(#1)#4--#5(#2)#6}}{, and \textup{#3(#1)#4--#5(#2)#6}}

\Crefformat{equation}{#2Equation~\textup{(#1)}#3}
\Crefrangeformat{equation}{Equations~\textup{#3(#1)#4--#5(#2)#6}}
\Crefmultiformat{equation}{Equations~\textup{#2(#1)#3}}{ and \textup{#2(#1)#3}}
{, \textup{#2(#1)#3}}{, and \textup{#2(#1)#3}}
\Crefrangemultiformat{equation}{Equations~\textup{#3(#1)#4--#5(#2)#6}}%
{ and \textup{#3(#1)#4--#5(#2)#6}}{, \textup{#3(#1)#4--#5(#2)#6}}{, and \textup{#3(#1)#4--#5(#2)#6}}

\usepackage{crossreftools}
\pdfstringdefDisableCommands{%
    \let\Cref\crtCref
    \let\cref\crtcref
}

\newtheorem{theorem}{Theorem}[section]
\newtheorem{corollary}[theorem]{Corollary}
\newtheorem{lemma}[theorem]{Lemma}
\newlist{lemenum}{enumerate}{1} 
\setlist[lemenum]{label=(\roman*), ref=\theproposition(\roman*), font=\rm}
\crefalias{lemenumi}{lemma} 

\newtheorem{proposition}[theorem]{Proposition}
\newlist{propenum}{enumerate}{1} 
\setlist[propenum]{label=(\roman*), ref=\theproposition(\roman*), font=\rm}
\crefalias{propenumi}{proposition} 

\newtheorem{definition}[theorem]{Definition}

\theoremstyle{remark}
\newtheorem{remark}[theorem]{Remark}
\newtheorem{example}[theorem]{Example}

\newlist{assumenum}{enumerate}{1} 
\setlist[assumenum]{leftmargin=2.1cm,label=(A\arabic*),font=\bfseries}
\creflabelformat{assumenumi}{#2#1#3}
\crefname{assumenumi}{assumption}{assumptions}
\Crefname{assumenumi}{Assumption}{Assumptions}

\numberwithin{equation}{section}

\title{Anisotropic Proximal Gradient}
\author{Emanuel Laude\thanks{	KU Leuven,
		Department of Electrical Engineering (ESAT-STADIUS),
		Kasteelpark Arenberg 10, 3001 Leuven, Belgium~
		{\tt%
			\href{mailto:emanuel.laude@esat.kuleuven.be}{\{emanuel.laude,}%
			\href{mailto:panos.patrinos@esat.kuleuven.be}{panos.patrinos\}}%
			\href{mailto:emanuel.laude@esat.kuleuven.be,panos.patrinos@esat.kuleuven.be}{@esat.kuleuven.be}%
		}
	} \and Panagiotis Patrinos\footnotemark[1]}

\begin{document}

\maketitle
\begin{abstract}
This paper studies a novel algorithm for nonconvex composite minimization which can be interpreted in terms of dual space nonlinear preconditioning for the classical proximal gradient method. The proposed scheme can be applied to additive composite minimization problems whose smooth part exhibits an anisotropic descent inequality relative to a reference function. It is proved that the anisotropic descent property is closed under pointwise average if the Bregman distance generated by the conjugate reference function is jointly convex. More specifically, for the exponential reference function we prove its closedness under pointwise conic combinations. We analyze the method's asymptotic convergence and prove its linear convergence under an anisotropic proximal gradient dominance condition. Applications are discussed including exponentially regularized LPs and logistic regression with nonsmooth regularization. In numerical experiments we show significant improvements of the proposed method over its Euclidean counterparts.
\end{abstract}
\begin{keywords}
Bregman distance $\cdot$ proximal gradient method $\cdot$ duality $\cdot$ nonlinear preconditioning
\end{keywords}
\begin{subclass}
65K05 $\cdot$ 49J52 $\cdot$ 90C30
\end{subclass}

\section{Introduction}
\subsection{Motivation and contributions}
A classical first-order approach for the minimization of an additive composite problem is the celebrated proximal gradient algorithm. In particular, if the gradient mapping of the smooth part of the cost function is Lipschitz continuous the algorithm converges with a constant step-size which avoids a possibly expensive, in terms of function evaluations, linesearch procedure.
However, there are many functions which do not exhibit a globally Lipschitz continuous gradient mapping and thus proximal gradient does not converge with a constant step-size. For example this is the case for problems involving exponential penalties.
It is well known that the proximal gradient method can be written in terms of a majorize-minimize procedure with a quadratic upper bound. However, in some cases more structure of the cost function is available. This offers the opportunity to construct problem specific and potentially tighter upper approximations that perhaps lead to faster convergence yet preserving the simplicity of a first-order method.
In this paper we study a novel, nonlinear extension of the proximal gradient method, called anisotropic proximal gradient method. In the same way that the Euclidean proximal gradient method at each step minimizes a simple quadratic model the proposed method minimizes a certain problem specific upper bound obtained from an \emph{anisotropic} generalization of the celebrated descent lemma (anisotropic smoothness) \cite{laude2021conjugate,laude2021lower}. This is conceptually similar to the notion of relative smoothness and the Bregman proximal gradient method \cite{birnbaum2011distributed,bauschke2017descent,lu2018relatively}. In fact, in the convex non-composite case the proposed method can be seen as a certain dual counterpart thereof where the roles of reference function and cost function are flipped \cite{maddison2021dual}.
The approach applies but is not limited to entropically regularized LPs, logistic regression and soft maximum-type problems covering problems that are not handled with the Bregman proximal gradient method. In experiments we show promising improvements over Euclidean methods including a recent family of linesearch-free adaptive proximal gradient methods \cite{latafat2023convergence,malitsky2020adaptive}.
More precisely the contributions of this work are summerized as follows:
\begin{itemize}
\item We introduce the notion of anisotropic smoothness, an extension of \cite{laude2021conjugate,laude2021lower}, that is characterized in terms of an inequality akin to (and in fact a generalization of) the well-known Euclidean descent inequality. In particular, we develop a calculus for anisotropic smoothness providing equivalent characterisations and invariances that allow one to assemble more complicated functions from elementary ones. For example in the exponential case we show that the property is closed under pointwise conic combinations which distinguishes it from the notion of Euclidean Lipschitz smoothness and the notion of relative smoothness.
\item Based on the anisotropic descent inequality we propose a novel algorithm that can be seen as a nonlinearly preconditioned version of the Euclidean proximal gradient method. For improved practicality we also consider a linesearch extension. We show that the method converges subsequentially using an unconventional regularized gap function as a measure of stationarity. In particular, this guarantees a sublinear rate wrt. the stationarity gap.
\item We prove the $Q$-linear convergence of the anisotropic proximal gradient method under a non-Euclidean generalization of the proximal gradient dominated condition \cite{karimi2016linear}. This is closely related to the PL-inequality. We prove that the anisotropic proximal gradient dominated condition is implied by anisotropic strong convexity of the smooth part. We show that relative smoothness \cite{bauschke2017descent,lu2018relatively} is a dual characterization of anisotropic strong convexity generalizing \cite{laude2021conjugate} to reference functions $\phi$ which are not necessarily super-coercive. Also note that for the Bregman proximal gradient method only an $R$-linear convergence result is known \cite{lu2018relatively}.
\ifx\ifsvjour\true
\else
\item We also provide an equivalent interpretation in terms of a difference of $\Phi$-convex approach. In particular, this allows us to employ a transfer of smoothness via a certain double-min duality to show linear convergence if the composite part is strongly convex rather than the smooth part. 
\fi
\item In numerical experiments we show that the algorithm improves over its Euclidean counterparts. On the task of regularized logistic regression we show that for small regularization weight the algorithm achieves a better performance than a recent family of linesearch-free adaptive proximal gradient methods \cite{latafat2023convergence,malitsky2020adaptive}.
\end{itemize}
\subsection{Related work}
A popular nonlinear extension of the classical proximal gradient method is the Bregman proximal gradient algorithm \cite{birnbaum2011distributed,bauschke2017descent,lu2018relatively}. Similar to our approach the algorithm applies to composite problems whose smooth part exhibits relative smoothness (in the Bregman sense), a generalization of Lipschitz smoothness introduced by the same authors. For instance this allows one to consider functions whose Hessians exhibit a certain polynomial growth \cite{lu2018relatively}. Yet, there exist examples, such as problems with exponential penalties, for which it is hard to construct ``simple'' reference functions. Simplicity of the reference function is this context refers to an easy to invert gradient mapping and simple proximal mappings both of which are crucial for tractability of the algorithm.
As a remedy we introduce the notion of anisotropic smoothness as an alternative generalization of Lipschitz smoothness. The notion of anisotropic smoothness considered in this paper is an extension of \cite{laude2021conjugate} relaxing the super-coercivity assumption of the reference function. Up to sign-flip, anisotropic smoothness as introduced in \cite{laude2021conjugate} can also be regarded as a globalization of anisotropic prox-regularity \cite{laude2021lower} specialized to smooth functions. 
Alongside relative smoothness \cite{lu2018relatively,bauschke2019linear} also propose the notion of relative strong convexity which allows the authors to study the linear convergence of the Bregman (proximal) gradient method. In particular, \cite{lu2018relatively} pose the open question of the existence of a meaningful conjugate duality for relative smoothness and strong convexity akin to the Euclidean case. Under full domain and super-coercivity of the reference function this is addressed in \cite{laude2021conjugate} where it is shown that such a duality correspondence (in the convex case) holds between relative smoothness and anisotropic strong convexity as well as relative strong convexity and anisotropic smoothness. However, the super-coercivity restriction limits the practical applicability of the framework. In this paper we further generalize these results to reference functions which are not necessarily super-coercive which allows us to include the important exponential reference function. 
Expanding on \cite{laude2021conjugate} we show that anisotropic smoothness is equivalent to the existence of an infimal convolution expression, or equivalently, to a certain notion of generalized concavity \cite{Vil08}, which is an important concept in optimal transport \cite{Vil08}.
In general, however, the pointwise sum of infimal convolutions is not an infimal convolution \cite{laude2021conjugate} limiting the practical applicability. Averages of infimal convolutions also appear in the context of the Bregman proximal average \cite{wang2021bregman}. In particular the authors study the preservation of its convexity for reference functions whose conjugates generate a jointly convex Bregman distance. Building upon these results we show that this property furnishes a sufficient condition for closedness of anisotropic smoothness under pointwise average even in the nonconvex case which is important for practice. Furthermore, for the exponential reference function, we show that anisotropic smoothness is closed under pointwise conic combinations--a property shared with the notion of \emph{quasi self-concordance} \cite{doikov2023minimizing,bach2010self}. In this case, a second-order characterization of anisotropic smoothness reveals a close relation to the recent notion of $(L_0, L_1)$-smoothness \cite{Zhang2020Why}.

\subsection{Paper organization}
The remainder of this paper is organized as follows:

In \Cref{sec:notation} we introduce the notation and some required basic concepts such as Bregman distances.

In \Cref{sec:aproxgrad_method} we introduce the notion of anisotropic smoothness, propose the algorithm and discuss the well-definedness of its iterates under certain constraint qualifications.

In \Cref{sec:calculus_asmooth} we develop a calculus for anisotropic smoothness and provide examples.

In \Cref{sec:aproxgrad_analysis} we analyse the algorithm.

\ifx\ifsvjour\true
\else
In \Cref{sec:phi_dca_and_phi_cvxt} we provide an equivalent interpretation in terms of a difference of $\Phi$-convex approach. 
\fi

In \Cref{sec:apps} we consider applications based on the examples developed in \Cref{sec:calculus_asmooth}.

\section{Notation and preliminaries} \label{sec:notation}
We denote by $\langle \cdot, \cdot\rangle$ the standard Euclidean inner product on $\bR^n$ and by $\|x\|:=\sqrt{\langle x, x \rangle}$ for any $x\in \bR^n$ the standard Euclidean norm on $\bR^n$.
In accordance with \cite{moreau1966fonctionnelles,RoWe98} we extend the classical arithmetic on $\bR$ to the extended real line $\exR:=\bR\cup\{-\infty, +\infty\}$. We define upper addition $-\infty \mathbin{\dot{+}} \infty = \infty$ and lower addition $-\infty \mathbin{\text{\d{\ensuremath{+}}}} \infty = -\infty$, and accordingly upper subtraction $\infty \mathbin{\dot{-}} \infty = \infty$ and lower subtraction $\infty \mathbin{\text{\d{\ensuremath{-}}}} \infty = -\infty$. The effective domain of an extended real-valued function $f : \bR^n \to \exR$ is denoted by $\dom f:=\{x\in\bR^n : f(x)<\infty\}$, and we say that $f$ is proper if $\dom f\neq\emptyset$ and $f(x) > -\infty$ for all $x \in \bR^n$; lower semicontinuous (lsc) if $f(\bar x)\leq\liminf_{x\to\bar x}f(x)$ for all $\bar x\in\bR^n$; super-coercive if $f(x)/\|x\|\to\infty$ as $\|x\|\to\infty$. We define by $\Gamma_0(\bR^n)$ the class of all proper, lsc convex functions $f:\bR^n \to \exR$.
For any functions $f :\bR^n \to \exR$ and $g:\bR^n \to \exR$ we define the infimal convolution or epi-addition of $f$ and $g$ as $(f \infconv g)(x) = \inf_{y \in \bR^n} g(x-y) + f(y)$.
For a proper function $f :\bR^n \to \exR$ and $\lambda \geq 0$ we define the epi-scaling $(\lambda \star f)(x) = \lambda f(\lambda^{-1} x)$ for $\lambda > 0$ and $(\lambda \star f)(x)=\delta_{\{0\}}(x)$ otherwise. We adopt the operator precedence for epi-multiplication and epi-addition from the pointwise case. We denote by $\bR^n_+=[0, +\infty)^n$ the nonnegative orthant and by $\bR^n_{++}=(0, +\infty)^n$ the positive orthant.
Let $g_-:=g(-(\cdot))$ denote the reflection of $g$. Since convex conjugation and reflection commute we adopt the notation: $(g_-)^* = (g^*)_- =:g^*_-$. We adopt the notions of essential smoothness, essential strict convexity and Legendre functions from \cite[Section 26]{Roc70}: We say that a function $f \in \Gamma_0(\bR^n)$ is \emph{essentially smooth}, if $\intr(\dom f) \neq \emptyset$ and $f$ is differentiable on $\intr(\dom f)$ such that $\|\nabla f(x^\nu)\|\to \infty$, whenever $\intr(\dom f) \ni x^\nu \to x \in \bdry\dom f$, and \emph{essentially strictly convex}, if $f$ is strictly convex on every convex subset of $\dom \partial f$, and \emph{Legendre}, if $f$ is both essentially smooth and essentially strictly convex.
We denote by $\bN_0:=\bN \cup\{0\}$.
Otherwise we adopt the notation from \cite{RoWe98}. 

Our concepts and algorithm involve a reference function $\phi$ which complies with the following standing requirement which is assumed valid across the entire paper unless stated otherwise:
\begin{assumenum}
\item \label{assum:a1} $\phi \in \Gamma_0(\bR^n)$ is Legendre 
with $\dom \phi = \bR^n$.
\end{assumenum}
The dual reference function $\phi^*$ is Legendre as well \cite[Theorem 26.3]{Roc70} but does not necessarily have full domain. Instead, thanks to a conjugate duality between super-coercivity and full domain \cite[Proposition 2.16]{bauschke1997legendre}, $\phi^*$ is super-coercive.
The gradient mapping $\nabla \phi : \bR^n \to \intr \dom \phi^*$ is a diffeomorphism between $\bR^n$ and $\intr \dom \phi^*$ with inverse $(\nabla \phi)^{-1} = \nabla \phi^*$ \cite[Theorem 26.5]{Roc70}.
In fact, super-coercivity is a standard assumption for the mirror-descent and Bregman (proximal) gradient algorithm as it ensures well-definedness of the mirror update \cite[Lemma 2(ii)]{bauschke2017descent}. Since the framework that will be developed in this paper corresponds to a certain dual Bregman approach the full-domain restriction is somewhat natural.

The running example considered in this paper is the exponential reference function:
\begin{example}
\ifx\ifsvjour\true
 \label[example]{ex:reference_function}
\else
 \label{ex:reference_function}
\fi
Let $\phi :\bR^n \to \bR$ defined by $\phi(x)=\SumExp(x) :=\sum_{j=1}^n \exp(x_j)$. Then $\phi \in \Gamma_0(\bR^n)$ is Legendre with full domain and thus complies with our standing assumption. The convex conjugate $\phi^*$ is the von Neumann entropy defined by
$$
\phi^*(x) = H(x):= \begin{cases} \sum_{j=1}^n x_j \ln(x_j) - x_j & \text{if $x \in \bR_{+}^n$} \\
+\infty & \text{otherwise,}
\end{cases}
$$
where $0\ln(0):= 0$.

For the gradients we have $\nabla \phi(x) =\Exp(x):=(\exp(x_j))_{j=1}^n$ and $\nabla \phi^*(x^*) =\Log(x^*):=(\ln(x^*_j))_{j=1}^n$.
\end{example}

In the course of the paper we often consider the Bregman distance $D_{\phi^*}$ generated by the dual reference function $\phi^*$:
\begin{definition}[Bregman distance]
We define the Bregman distance generated by $\phi^*$ as:
$$
D_{\phi^*}(x, y) := \begin{cases} \phi^*(x) - \phi^*(y) - \langle \nabla \phi^*(y), x-y \rangle & \text{if $x \in \dom \phi^*, y \in \intr \dom \phi^*$} \\
+\infty & \text{otherwise.}
\end{cases}
$$
\end{definition}
Thanks to \cite[Theorem 3.7(iv)]{bauschke1997legendre} we have that $D_{\phi^*}(x, y) = 0$ if and only if $x=y$.

For $\phi=\SumExp$ being the exponential reference function as in \cref{ex:reference_function} the Bregman distance $D_{\phi^*}$ is the classical KL-divergence.
Thanks to \cite[Theorem 3.7(v)]{bauschke1997legendre} have the following key relation between $D_{\phi^*}$ and $D_{\phi}$:
\begin{lemma} \label{thm:dual_bregman}
We have that the identity
$$
D_{\phi^*}(x, y) = D_{\phi}(\nabla \phi^*(y), \nabla \phi^*(x)),
$$
holds true for any $x,y \in \intr \dom \phi^*$.
\end{lemma}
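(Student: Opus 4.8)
The plan is to reduce the identity to a direct computation built on the Fenchel--Young equality, exploiting the fact recorded after \cref{assum:a1} that $\nabla\phi^* = (\nabla\phi)^{-1}$ is a diffeomorphism. Since both sides of the asserted equality are Bregman distances, everything is explicit once the right change of variables is made, so I expect no structural difficulty beyond careful bookkeeping.

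First I would introduce the substitution $u := \nabla\phi^*(y)$ and $v := \nabla\phi^*(x)$. As $x,y \in \intr\dom\phi^*$, both $u,v$ are well defined and lie in $\bR^n = \dom\phi$, and the inverse-gradient relation yields $\nabla\phi(u) = y$ and $\nabla\phi(v) = x$. Writing out the right-hand side of the claim then gives
$$
D_\phi(u,v) = \phi(u) - \phi(v) - \langle \nabla\phi(v), u-v\rangle = \phi(u) - \phi(v) - \langle x, u-v\rangle.
$$

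Next I would eliminate $\phi(u)$ and $\phi(v)$ through the Fenchel--Young equality. Because $y = \nabla\phi(u)$ and $x = \nabla\phi(v)$, the pairs $(u,y)$ and $(v,x)$ are conjugate, so $\phi(u) + \phi^*(y) = \langle u,y\rangle$ and $\phi(v) + \phi^*(x) = \langle v,x\rangle$ both hold \emph{with equality}. Substituting these into the displayed expression and expanding, the terms $\langle x,v\rangle$ cancel, leaving $\phi^*(x) - \phi^*(y) + \langle y-x, u\rangle$. Recalling $u = \nabla\phi^*(y)$, this is exactly $\phi^*(x) - \phi^*(y) - \langle \nabla\phi^*(y), x-y\rangle = D_{\phi^*}(x,y)$, which closes the argument.

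The arithmetic is routine; the only point deserving care is the justification that the two Fenchel--Young relations hold as equalities rather than merely as inequalities. This is guaranteed because we evaluate at conjugate pairs --- equivalently $u \in \partial\phi^*(y)$ and $v \in \partial\phi^*(x)$ --- which follows from $x,y \in \intr\dom\phi^*$ together with the Legendre property of $\phi$ making $\nabla\phi$ and $\nabla\phi^*$ mutually inverse. The same observation shows that $D_\phi(u,v)$ is finite, so no $\infty-\infty$ ambiguity can arise in the manipulation.
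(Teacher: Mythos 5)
Your proof is correct: the substitution $u=\nabla\phi^*(y)$, $v=\nabla\phi^*(x)$ together with the two Fenchel--Young equalities (valid because $(u,y)$ and $(v,x)$ are conjugate pairs under the Legendre property) reduces $D_\phi(u,v)$ exactly to $D_{\phi^*}(x,y)$, and your finiteness remark disposes of any $\infty-\infty$ issue since $\dom\phi=\bR^n$ and $x,y\in\intr\dom\phi^*$. The paper itself gives no proof here --- it simply cites \cite[Theorem 3.7(v)]{bauschke1997legendre} --- and your computation is precisely the standard argument underlying that cited result, so the two approaches coincide in substance.
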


\section{Anisotropic proximal gradient} \label{sec:aproxgrad_method}
\subsection{Anisotropic descent inequality}
The algorithm that is developed in this paper is based on the following anisotropic descent property generalizing \cite[Remark 3.10]{laude2021conjugate} to reference functions $\phi$ which are not necessarily super-coercive.
Up to sign-flip it can also be regarded as a globalization of anisotropic prox-regularity \cite[Definition 2.13]{laude2021lower} specialized to smooth functions.
\begin{definition}[anisotropic descent inequality (anisotropic smoothness)]
\ifx\ifsvjour\true
\label[definition]{def:adescent}
\else
\label{def:adescent}
\fi
Let $f \in \mathcal{C}^1(\bR^n)$ such that the following constraint qualification holds true
\begin{align} \label{eq:cq_adescent}
\ran \nabla f \subseteq \ran \nabla \phi.
\end{align}
Then we say that $f$ satisfies the anisotropic descent property (is anisotropically smooth) relative to $\phi$ with constant $L>0$ if for all $\bar x \in \bR^n$
\begin{equation}\label{eq:adescent}
f(x) \leq f(\bar x) + \tfrac{1}{L} \star \phi(x-\bar x + L^{-1}\nabla\phi^*(\nabla f(\bar x))) - \tfrac{1}{L} \star \phi(L^{-1}\nabla\phi^*(\nabla f(\bar x))) \quad \forall x\in\bR^n.
\end{equation}
If $L=1$ we say that $f$ satisfies the anisotropic descent property relative to $\phi$ without further mention of $L$.
\end{definition}
The following remarks are in order:
\begin{remark}[generalization of Euclidean descent lemma]
\ifx\ifsvjour\true
 \label[remark]{rem:euclidean_descent_lemma}
\else
 \label{rem:euclidean_descent_lemma}
\fi
For $\phi=\tfrac{1}{2}\|\cdot\|^2$ by expanding the square since $\nabla \phi^* = \id$ and $\tfrac1 L \star \phi = \frac{L}{2}\|\cdot\|^2$ the upper bound inequality \cref{eq:adescent} specializes to the classical descent lemma:
\begin{align*}
f(x) &\leq f(\bar x) + \tfrac{L}{2}\|x-\bar x + L^{-1} \nabla f(\bar x)\|^2 - \tfrac L 2 \|L^{-1} \nabla f(\bar x)\|^2 \\
&= f(\bar x) + \langle \nabla f(\bar x), x - \bar x \rangle + \tfrac{L}{2}\|x - \bar x\|^2.
\end{align*}
\end{remark}

\begin{remark}[proximal linearization representation of anisotropic smoothness]
Using the definition of the Bregman distance the descent inequality \cref{eq:adescent} can be rewritten in terms of a proximal linearization as follows:
$$
f(x) \leq f(\bar x) + \langle \nabla f(\bar x), x- \bar x \rangle + D_{L^{-1} \star \phi}(x - \bar y, \bar x - \bar y),
$$
for all $x,\bar x \in \bR^n$ and $\bar y = \bar x - L^{-1}\nabla \phi^*(\nabla f(\bar x))$. 
The inequality is reminiscent of the Bregman descent inequality \cite{birnbaum2011distributed,bauschke2017descent,lu2018relatively} where the arguments in the Bregman distance are translated by the point $\bar y = \bar x - L^{-1}\nabla \phi^*(\nabla f(\bar x))$. As we shall see this guarantees shift-invariance, a property which is in general lacking in the classical Bregman descent inequality.
\end{remark}

\begin{remark}
Since $\nabla (L^{-1} \star \phi)^* = L^{-1}\nabla \phi^*$, saying that $f$ satisfies the anisotropic descent property relative to $\phi$ with constant $L>0$ means equivalently that $f$ satisfies the anisotropic descent property relative to $\frac1L \star \phi$ (with constant $1$).
\end{remark}

The constraint qualification \cref{eq:cq_adescent} ensures that the expression $\nabla\phi^*(\nabla f(\bar x)) \in \bR^n$ is well defined.
If we choose $x:=\bar x$ the descent inequality \cref{eq:adescent} collapses to $f(\bar x) \leq f(\bar x)$ and thus 
$$
x \mapsto f(\bar x) + \tfrac{1}{L} \star \phi(x-\bar x + L^{-1}\nabla\phi^*(\nabla f(\bar x))) - \tfrac{1}{L} \star \phi(L^{-1}\nabla\phi^*(\nabla f(\bar x))),
$$
as a function in $x$ is an upper bound or majorizer of $f$ at $\bar x$.
With some abuse of terminology we often refer to a function that only has anisotropic upper bounds as anisotropically smooth. This differs from \cite[Definition 3.8(iii)]{laude2021conjugate} which assumes the existence of both, upper and lower bounds. For convex functions, however, the lower bound inequality holds automatically in which case the two notions coincide.

The upper bounds obtained from the descent inequality suggest a majorize-minimize procedure akin to the classical proximal gradient scheme that is discussed in the next subsection. Practical examples and a calculus for anisotropic smoothness are developed in \Cref{sec:calculus_asmooth}.

\subsection{Algorithm definition, well-definedness and standing assumptions}
In this paper our goal is to develop a novel algorithm for nonconvex composite minimization problems of the form:
\begin{align} \label{eq:opt_prob} \tag{P}
\text{minimize}~ F := f + g,
\end{align}
where 
\begin{assumenum}[resume*]
\item \label{assum:a2} $f\in \mathcal{C}^1(\bR^n)$ has the anisotropic descent property with constant $L>0$;
\item \label{assum:a3} $g:\bR^n \to \exR$ is proper lsc;  
\item \label{assum:a4} $\inf F > -\infty$.
\end{assumenum}

This suggests the following iterative procedure for minimizing the cost function: In each iteration we minimize the anisotropic upper bound at the current iterate $x^k$ to compute the next iterate $x^{k+1}$:
\begin{align} \label{eq:majorize_minimize}
x^{k+1} &= \argmin_{x \in \bR^n} \left\{f(x^k) + \tfrac{1}{L} \star \phi(x-x^k + L^{-1}\nabla\phi^*(\nabla f(x^k))) - \tfrac{1}{L} \star \phi(L^{-1}\nabla\phi^*(\nabla f(x^k))) + g(x) \right\}.
\end{align}
Choose \(\lambda=1/L\). We will show later that what follows is actually valid for any \(\lambda \in (0, 1/L]\). Introducing a new variable $y^k := x^k - \lambda \nabla \phi^*(\nabla f(x^k))$ we can rewrite the update \cref{eq:majorize_minimize} in terms of a forward-backward like scheme listed in \cref{alg:aproxgrad}.
\begin{algorithm}[H]
\caption{Anisotropic proximal gradient (anisoPG)}
\label{alg:aproxgrad}
\begin{algorithmic}
\REQUIRE Let $\lambda >0$. Let $x^0 \in \bR^n$.
\FORALL{$k=0, 1, \dots$}
   \STATE \begin{align}
	y^k &= x^k - \lambda \nabla \phi^*(\nabla f(x^k)) \label{eq:forward} \\
   	x^{k+1} &\in \argmin_{x \in \bR^n} \left\{g(x) + \lambda \star \phi(x - y^k)\right\}, \label{eq:backward}
	\end{align}
\ENDFOR
\end{algorithmic}
\end{algorithm}
We refer to \cref{eq:forward} as the anisotropic \emph{forward-step} and \cref{eq:backward} as the anisotropic \emph{backward-step}.

The following remarks are in order:
\begin{remark}[generalization of dual space preconditioning and descent direction]
\ifx\ifsvjour\true
 \label[remark]{rem:dual_space_pc}
\else
 \label{rem:dual_space_pc}
\fi
Suppose that $g\equiv 0$. Minimizing the anisotropic upper bound at $x^k$ wrt. $x$ and assuming that $0 \in \ran \nabla \phi$ we obtain by first-order stationarity:
\begin{align}
0 = \nabla \phi(\lambda^{-1}(x^{k+1} - x^k) + \nabla\phi^*(\nabla f(x^k))),
\end{align}
which reads after rearranging
\begin{align}
x^{k+1}= x^k - \lambda \nabla \phi^*(\nabla f(x^k)) + \lambda\nabla \phi^*(0).
\end{align}
By replacing $\phi^*$ with $\phi^*- \langle \nabla \phi^*(0), \cdot \rangle$ we can assume without loss of generality that $\nabla \phi^*(0)=0$ or equivalently that $\phi$ is minimized at $0$ and thus the update simplifies to
\begin{align}
x^{k+1}= x^k - \lambda \nabla \phi^*(\nabla f(x^k)).
\end{align}
For convex $f$ this is \emph{dual space preconditioning} for gradient descent \cite{maddison2021dual}.
Furthermore, note that $-\nabla \phi^*(\nabla f(x))$ is always a descent direction of $f$ at $x$ since by strict monotonicity of $\nabla \phi^*$ and the identity $\nabla \phi^*(0)=0$ one has:
\begin{align*}
\langle \nabla \phi^*(\nabla f(x)), \nabla f(x) \rangle &= \langle \nabla \phi^*(\nabla f(x)) - \nabla \phi^*(0), \nabla f(x) - 0 \rangle \geq 0,
\end{align*}
with equality if and only if $\nabla f(x)=0$.
\end{remark}

\begin{remark}[generalization of scaled proximal gradient descent]
\ifx\ifsvjour\true
 \label[remark]{rem:euclidean_pgd}
\else
 \label[remark]{rem:euclidean_pgd}
\fi
For $\phi \in \Gamma_0(\bR^n)$ being a squared Euclidean norm defined as $\phi(x)=\frac12\|x\|_M^2:=\frac12\langle x, M x\rangle$ with $M$ symmetric positive definite we have $\nabla \phi^*(x)=M^{-1}x$. Thus the algorithm becomes a scaled version of the Euclidean proximal gradient method:
\begin{align}
y^k &= x^k - \lambda M^{-1}\nabla f(x^k) \\
x^{k+1} &= \argmin_{x \in \bR^n} \; \left\{\tfrac{1}{2\lambda}\|x - y^k\|_M^2 + g(x) \right\},
\end{align}
where the backward-step is a scaled Euclidean proximal mapping.
\end{remark}

More generally, the backward-step amounts to the \emph{left} anisotropic proximal mapping of $g$ at $y^k$ which is formally defined as follows:
\begin{definition}[left and right anisotropic proximal mapping and Moreau envelope]
Let $\lambda >0 $ and $x \in \bR^n$. Then the right anisotropic proximal mapping of $g$ with parameter $\lambda$ at $x$ is defined as:
\begin{align}
\aprox[\lambda]{\phi}{g}(x) := \argmin_{y \in \bR^n} \;\{\lambda \star \phi(x - y) + g(y)\},
\end{align}
and the right anisotropic Moreau envelope is the infimal convolution of $g$ and $\lambda \star \phi$:
\begin{align}
(g \infconv \lambda \star \phi)(x) = \inf_{y \in \bR^n}\;\{\lambda \star \phi(x - y) + g(y)\}.
\end{align}
The left anisotropic proximal mapping and Moreau envelope of $g$ with parameter $\lambda$ are defined as
\begin{align}
y \mapsto \argmin_{x \in \bR^n} \;\{\lambda\star \phi(x - y) + g(x)\} \quad \text{and} \quad y \mapsto \inf_{x \in \bR^n} \;\{\lambda\star \phi(x - y) + g(x)\}.
\end{align}
The reflection $\phi_-$ of $\phi$ allows us to express the left anisotropic proximal mapping and Moreau envelope in terms of the right anisotropic proximal mapping and Moreau envelope:
\begin{align}
\argmin_{x \in \bR^n} \;\{\lambda \star \phi(x-y) + g(x)\} &= \argmin_{x \in \bR^n}\;\{ \lambda \star \phi_-(y-x) + g(x) \}= \aprox[\lambda]{\phi_-}{g} (y), \\
\inf_{x \in \bR^n} \;\{\lambda \star \phi(x-y) + g(x) \}&= \inf_{x \in \bR^n}\; \{\lambda \star \phi_-(y-x) + g(x) \}=(g \infconv \lambda \star \phi_-)(y). 
\end{align}
If $\phi(x)=\frac{1}{2}\|x\|^2$ left and right anisotropic proximal mapping and Moreau envelope both coincide with the Euclidean proximal mapping and Moreau envelope.
\end{definition}
For well-definedness of the backward-step we assume that for any input $y$ the minimum in $\argmin_{x \in \bR^n} \;\{\lambda \star \phi(x-y) + g(x)\}$ is attained. This is guaranteed if $g + \lambda \star \phi(\cdot-y)$ is level-bounded for any $y$.

\begin{remark}[threshold of anisotropic prox-boundedness]
\ifx\ifsvjour\true
 \label[remark]{rem:level_boundedness}
\else
 \label{rem:level_boundedness}
\fi
Without loss of generality we can assume that $\phi(0)=0$ since a replacement of $\phi$ with the perturbed reference function $\phi(x)-\phi(0)$ does not affect the proximal mapping. Then, thanks to \cite[Lemma 4.4]{burke2013epi}, $\lambda_1 \star \phi(x-y) \leq \lambda_2 \star \phi(x-y)$ for $\lambda_2 < \lambda_1$ and thus level-boundedness of $g + \lambda \star \phi(\cdot-y)$ for $\lambda=\lambda_0$ implies level-boundedness of $g + \lambda \star \phi(\cdot-y)$ for any $\lambda < \lambda_0$.
This leads us to define the threshold of anisotropic prox-boundedness:
\begin{align}
\lambda_g:= \sup \{\lambda >0 \mid \text{$g + \lambda \star \phi(\cdot-y)$ is level-bounded for any $y\in \bR^n$} \}.
\end{align}
Note that $\lambda_g=+\infty$ if $g$ (or $\phi$) is bounded from below and $\phi$ (or $g$) is coercive.
\end{remark}
We make the following additional assumption on $\lambda_g$ and $\lambda$ to ensure well-definedness of the proximal mapping:
\begin{assumenum}[resume*]
\item \label{assum:a5} Assume that $\lambda_g >0$ and choose $\lambda>0$ such that $\lambda < \lambda_g$ and $\lambda \leq 1/L$.
\end{assumenum}
Next we provide characterizations of the anisotropic proximal mapping and Moreau envelope in the convex case. In particular we provide an expression of the anisotropic proximal mapping in terms of an anisotropic resolvent justifying the term backward-step. In addition we state a non-Euclidean Moreau decomposition that reveals an interesting duality between the anisotropic proximal mapping and the well-established Bregman proximal mapping. The following result is a refinement of \cite[Theorem 3.1(ii)]{combettes2013moreau} and \cite[Proposition 2.16]{wang2021bregman}, originally due to \cite[Theorem 3.2]{Teboulle92}:

\begin{proposition}[Bregman--Moreau decomposition and anisotropic resolvent]
\ifx\ifsvjour\true
 \label[proposition]{thm:moreau_decomposition}
\else
 \label{thm:moreau_decomposition}
\fi
Let $g\in \Gamma_0(\bR^n)$. Assume that $\emptyset \neq \dom g^* \cap \intr \dom \phi^*$. Define by $\bprox[\lambda]{\phi^*}{g^*}:=\argmin_{y \in \bR^n} \{g^*(y) + \lambda D_{\phi^*}(y, \cdot)\}$ the left Bregman proximal mapping of $g^*$ with parameter $\lambda^{-1}$ and reference function $\phi^*$. Then the following statements hold:
\begin{propenum}
\item \label{thm:moreau_decomposition:decomp}
$\id = \aprox[\lambda]{\phi}{g} + \lambda \nabla \phi^* \circ \bprox[\lambda]{\phi^*}{g^*}\circ \nabla \phi \circ \lambda^{-1} \id$; 
\item \label{thm:moreau_decomposition:smooth} $g \infconv \lambda \star \phi \in \mathcal{C}^1(\bR^n)$ is convex with 
$
\nabla (g \infconv \lambda \star \phi) = \nabla \phi \circ \lambda^{-1}(\id - \aprox[\lambda]{\phi}{g})
$;
\item \label{thm:moreau_decomposition:bprox} $\bprox[\lambda]{\phi^*}{g^*} \circ \nabla(\lambda \star \phi) =(\partial g^* + \lambda \nabla \phi^*)^{-1} = \nabla (g \infconv 
\lambda \star \phi)$;
\item \label{thm:moreau_decomposition:aprox} $\aprox[\lambda]{\phi}{g} = (\id + \lambda \nabla \phi^* \circ \partial g)^{-1}$.
\end{propenum}
\end{proposition}
The proof is omitted for brevity since it follows closely the ones of the results in the given references invoking basic properties of infimal convolutions, in particular \cite[Proposition 18.7]{BaCo110}.
Note that for $\phi=\frac{1}{2}\|\cdot\|^2$ \cref{thm:moreau_decomposition:decomp} specializes to the well known Moreau decomposition of the classical Euclidean proximal mapping where the expressions in \cref{thm:moreau_decomposition:bprox,thm:moreau_decomposition:aprox} yield the Euclidean resolvents $(\id + \lambda^{-1} \partial g^*)^{-1}$ resp. $(\id + \lambda \partial g)^{-1}$ of the subdifferentials of $g^*$ and $g$.
The following remarks are in order.
\begin{remark}[relation to the Bregman proximal gradient algorithm \cite{bauschke2017descent,lu2018relatively}] 
\ifx\ifsvjour\true
 \label[remark]{rem:relation_bregman_pgd}
\else
 \label{rem:relation_bregman_pgd}
\fi
In this remark we shall clarify the differences between \cref{alg:aproxgrad} and the Bregman proximal gradient algorithm \cite{bauschke2017descent,lu2018relatively}. In the Bregman proximal gradient algorithm the forward-step takes the form $y^k = \nabla \phi^*(\nabla \phi(x^k) - \lambda\nabla f(x^k))$ which differs from the forward-step \cref{eq:forward} of our method unless $\phi(x)=\frac{1}{2}\langle x, Mx \rangle$, as in \cref{rem:euclidean_pgd}, in which case both methods collapse to the same algorithm. Despite the duality relation between the backward-step \cref{eq:backward} of our method and the backward-step of the Bregman proximal gradient method given in the lemma above the anisotropic proximal gradient algorithm is not equivalent to the Bregman proximal gradient method applied to the Fenchel--Rockafellar dual problem. In fact, even in the Euclidean case the proximal gradient algorithm is not self-dual under Fenchel--Rockafellar duality.
\ifx\ifsvjour\true\else
Instead the anisotropic proximal gradient method is self-dual under a certain double-min duality as shown in \Cref{sec:dca}.
\fi
\end{remark}
\begin{remark}
If $g$ is convex, in light of the above result, well-definedness of the anisotropic proximal mapping and Moreau envelope is implied by validity of the constraint qualification:
\begin{assumenum}[resume*]
\item \label{assum:a5'} $g$ is convex and $\emptyset \neq \dom g^* \cap \intr (\dom \phi_-^*)$ and $\lambda \leq 1/L$.
\end{assumenum}
Henceforth, whenever $g$ is assumed to be convex we impose \cref{assum:a5'} in place of \cref{assum:a5}. In fact, this implies that $f + \lambda \star \phi(\cdot - y)$ is level-bounded for any $y$ and for any $\lambda >0$ and thus in particular $\lambda_g=+\infty$.
Further note that for $g\equiv 0$ the CQ becomes $0 \in \intr \dom \phi^*$ and thus $0 \in \ran \nabla \phi$; also see \cref{rem:dual_space_pc}
\end{remark}

\section{Anisotropic smoothness: calculus and examples} \label{sec:calculus_asmooth}
\subsection{Basic calculus}
In this subsection we show that anisotropic smoothness is a property satisfied by many practically relevant problems such as logistic regression, exponential regression or problems involving $\logsumexp$. For that purpose we show how to assemble more complicated functions from simple ones by developing a calculus for anisotropic smoothness.
To begin with we shall provide equivalent characterizations of anisotropic smoothness. In particular we show that anisotropic smoothness of $f\in \mathcal{C}^1(\bR^n)$ relative to $\phi$ means equivalently that $f$ can be written in terms of a smooth infimal convolution with $\phi$. This generalizes \cite{laude2021conjugate} to reference functions $\phi$ which are not necessarily super-coercive.
In the convex case we also provide dual characterizations of anisotropic smoothness in terms of relative strong convexity in the Bregman sense
refining \cite[Lemma 4.2]{wang2021bregman}.
In the smooth case relative strong convexity was considered in \cite{lu2018relatively,bauschke2019linear}.
\begin{proposition}[characterizations of anisotropic smoothness]
\ifx\ifsvjour\true
 \label[proposition]{thm:phi_convex_asmooth}
\else
 \label{thm:phi_convex_asmooth}
\fi
Let $f:\bR^n \to \exR$ be proper lsc and $L>0$. 
Then the following are equivalent:
\begin{propenum}
\item \label{thm:phi_convex_asmooth:asmooth} $f$ has the anisotropic descent property relative to $\phi$ with constant $L$;
\item \label{thm:phi_convex_asmooth:infconv} $f\in \mathcal{C}^1(\bR^n)$ with $\ran \nabla f \subseteq \ran \nabla \phi$ and $f = \xi \infconv L^{-1}\star \phi$ for some $\xi:\bR^n \to \exR$.
\end{propenum}
Any of the above statements implies that the infimal convolution in the second statement can be taken to be exact for some $\xi:\bR^n \to \exR$ proper, lsc, where exactness means that for any $x \in\bR^n$ there is $y \in \bR^n$ such that $(\xi \infconv L^{-1}\star \phi)(x)=L^{-1}\star \phi(x-y) + \xi(y)$.

If, in addition, $f$ is convex the assumption $f\in \mathcal{C}^1(\bR^n)$ is superfluous in the second statement, $\xi$ can be taken to be convex lsc and the following items can be added to the list
\begin{propenum}[resume]
\item \label{thm:phi_convex_asmooth:str_cvx} $f^* = \psi + L^{-1}\phi^*$ for some $\psi \in \Gamma_0(\bR^n)$ with $\dom \psi \cap \intr \dom\phi^* \neq \emptyset$;
\item \label{thm:phi_convex_asmooth:str_cvx_relint} $\dom f^* \subseteq \dom \phi^*$ with $\relint \dom f^* \cap \intr \dom \phi^* \neq \emptyset$ and $f^* \mathbin{\dot{-}} L^{-1}\phi^*$ is convex on $\relint \dom f^*$, i.e., $f^*$ is strongly convex relative to $\phi^*$ with constant $L^{-1}$ in the Bregman sense.
\end{propenum}
If, furthermore, $f,\phi$ are Legendre and $\mathcal{C}^2(\bR^n)$ with nonsingular Hessian the following item can be added to the list
\begin{propenum}[resume]
\item \label{thm:phi_convex_asmooth:cvx_c2} $\ran \nabla f \subseteq \ran \nabla \phi$ and $\nabla^2 f(x) \preceq L \nabla^2 \phi(\nabla \phi^*(\nabla f(x)))$ for all $x \in \bR^n$.
\end{propenum}
\end{proposition}
We highlight that the existence of an infimal convolution expression is equivalent to the notion of \emph{generalized convexity} introduced in \Cref{sec:generalized_conjugacy}; see \cref{thm:inf_conv_rem}. In particular, generalized convexity is a key ingredient for the proof of the equivalence between the first two items which follows along the lines of the proof of \cite[Proposition 4.5]{laude2021conjugate} replacing $f$ with $-f$. The proof of the additional equivalences under convexity follows along the lines of the proofs of \cite[Theorem 4.6]{laude2021conjugate} and \cite[Lemma 4.2]{wang2021bregman}. The last equivalence follows by \cite[Proposition 3.4]{maddison2021dual} that invokes the inverse function theorem. We provide a complete proof in \Cref{sec:thm:phi_convex_asmooth}.

\begin{remark}[sufficiency of Hessian bound for anisotropic smoothness in more general cases]
\ifx\ifsvjour\true
 \label[remark]{rem:sufficiency_hessian}
\else
 \label{rem:sufficiency_hessian}
\fi
Under twice differentiability of $f,\phi$, by equating the second-order Taylor expansions of $f$ and the anisotropic upper bound \cref{eq:adescent} it can be proved that the Hessian characterization \cref{thm:phi_convex_asmooth:cvx_c2} is always a necessary condition for anisotropic smoothness (despite a lack of convexity and nonsingularity of the Hessians everywhere).
Unfortunately, sufficiency is hard to verify in general as the Hessian bound is a local pointwise condition that needs to be transformed into a global one.
In case $f$ is convex but not necessarily Legendre with invertible Hessian the above result can be generalized by replacing $f$ with a certain regularized version that is Legendre convex with invertible Hessian as explored in \cite[Proposition 3.7]{maddison2021dual}.
More generally, one can leverage the interpretation of anisotropic smoothness in terms of $\Phi$-convexity, \cref{thm:inf_conv_rem}:
The sufficiency of the Hessian bound for $\Phi$-convexity is studied in \cite[Theorem 3.16]{léger2023gradient} and \cite[Theorem 12.46]{Vil08} for general couplings $\Phi$ that exhibit nonnegative cross-curvature \cite[Definition 2.8]{léger2023gradient}: In particular, if $\phi=\SumExp$, by \cite[Example 2.13(4)]{léger2023gradient}, $\Phi: (x,y)\mapsto \SumExp(x-y) \in \mathcal{C}^4(\bR^n \times \bR^n)$ has nonnegative cross-curvature and hence by \cite[Theorem 3.16]{léger2023gradient} the Hessian characterization \cref{thm:phi_convex_asmooth:cvx_c2} is sufficient for anisotropic smoothness even in the nonconvex case. In this case we have $\nabla^2 \phi(y) = \diag \Exp(y)$ and $\nabla \phi^*(\nabla f(x)) = \Log(\nabla f(x))$ and hence the condition reads $\nabla^2 f(x) \preceq L \diag(\nabla f(x))$ for $\ran \nabla f \subseteq \bR^n_{++}$. This is reminiscent of $(L_0, L_1)$-smoothness \cite{Zhang2020Why} from machine learning and a first-order version of \emph{quasi self-concordance} \cite{doikov2023minimizing,bach2010self}.
\end{remark}


Next we shall study operations that preserve anisotropic smoothness. To begin with we show that in contrast to Bregman relative smoothness, anisotropic smoothness enjoys a translation invariance property:
\begin{proposition}[translation invariance]
\ifx\ifsvjour\true
 \label[proposition]{thm:translation_invariance}
\else
 \label{thm:translation_invariance}
\fi
Let $f \in\mathcal{C}^1(\bR^n)$ have the anisotropic descent property relative to $\phi$ with constant $L$ and let $b\in\bR^n$. Then $f(\cdot - b)$ has the anisotropic descent property relative to $\phi$ with constant $L$ as well.
\end{proposition}
\begin{proposition}[epi-scaling]
\ifx\ifsvjour\true
 \label[proposition]{thm:epi_scaling_asmooth}
\else
 \label{thm:epi_scaling_asmooth}
\fi
Let $f \in\mathcal{C}^1(\bR^n)$ have the anisotropic descent property relative to $\phi$ with constant $L$. Let $\sigma >0$. Then $\sigma \star f$ has the anisotropic descent property relative to $\phi$ with constant $L/\sigma$.
\end{proposition}
Using the characterization of anisotropic smoothness in terms of an infimal convolution the proofs are elementary.

Next we show that like its Euclidean counterpart anisotropic smoothness is separable:
\begin{proposition}[separability of anisotropic smoothness]
\ifx\ifsvjour\true
 \label[proposition]{thm:separable}
\else
 \label{thm:separable}
\fi

Let $f_i \in\mathcal{C}^1(\bR^{n_i})$ have the anisotropic descent property relative to $\phi_i:\bR^{n_i} \to \bR$ with constants $L_i$ for $1\leq i \leq m$. Then $f:\bR^{n_1} \times \cdots \times \bR^{n_m} \to \bR$ defined as $f(x):=\sum_{i=1}^m f_i(x_i)$ has the anisotropic descent property relative to $\phi:\bR^{n_1} \times \cdots \times \bR^{n_m} \to \bR$ defined by $\phi(x):=\sum_{i=1}^m L_i^{-1} \star \phi_i(x_i)$.
\end{proposition}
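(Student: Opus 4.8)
The plan is to reduce to the normalized constant $L=1$, exploit the fact that every object entering the anisotropic descent inequality acts block-wise, and then simply sum the individual inequalities. First I would invoke the remark following \cref{def:adescent}: since $h_i$ has the anisotropic descent property relative to $\phi_i$ with constant $L_i$, it equivalently has the property relative to $\tilde{\phi}_i := L_i^{-1}\star\phi_i$ with constant $1$. Writing $\phi(x)=\sum_{i=1}^m\tilde{\phi}_i(x_i)$, the assertion reduces to showing that $h=\sum_i h_i$ has the anisotropic descent property relative to $\phi$ with constant $1$. Before that I would record that $\phi$ is an admissible reference function, i.e. complies with \cref{assum:a1}: each $\phi_i$ is Legendre with full domain, epi-scaling preserves membership in $\Gamma_0(\bR^{n_i})$, the Legendre property, and full domain (recall $(L_i^{-1}\star\phi_i)^*=L_i^{-1}\phi_i^*$, so $\nabla\tilde{\phi}_i^*=L_i^{-1}\nabla\phi_i^*$), and a separable sum of full-domain Legendre functions is again full-domain Legendre, with $\phi^*(y)=\sum_i\tilde{\phi}_i^*(y_i)$.

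The key observation is that, thanks to separability, $\nabla h(\bar x)=(\nabla h_i(\bar x_i))_i$ and $\nabla\phi^*(y)=(\nabla\tilde{\phi}_i^*(y_i))_i$, so the forward quantity decomposes as $\nabla\phi^*(\nabla h(\bar x))=(\nabla\tilde{\phi}_i^*(\nabla h_i(\bar x_i)))_i$. In particular the constraint qualification \cref{eq:cq_adescent} transfers: since the gradient of a separable function has product range and $\ran\nabla\tilde{\phi}_i=\ran\nabla\phi_i$, one has $\ran\nabla h=\prod_i\ran\nabla h_i\subseteq\prod_i\ran\nabla\tilde{\phi}_i=\ran\nabla\phi$, so $\nabla\phi^*(\nabla h(\bar x))$ is well defined. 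Likewise $h\in\mathcal{C}^1(\bR^n)$ because each $h_i$ is, so the ambient hypotheses of \cref{def:adescent} hold for the pair $(h,\phi)$.

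It then remains to evaluate the right-hand side of the (constant-$1$) anisotropic descent inequality for $h$ relative to $\phi$. Using $\phi(z)=\sum_i\tilde{\phi}_i(z_i)$ together with the block decompositions above, the $i$-th block of the argument $x-\bar x+\nabla\phi^*(\nabla h(\bar x))$ equals $x_i-\bar x_i+\nabla\tilde{\phi}_i^*(\nabla h_i(\bar x_i))$, whence the right-hand side equals
\[
\sum_{i=1}^m \Bigl[ h_i(\bar x_i) + \tilde{\phi}_i\bigl(x_i - \bar x_i + \nabla\tilde{\phi}_i^*(\nabla h_i(\bar x_i))\bigr) - \tilde{\phi}_i\bigl(\nabla\tilde{\phi}_i^*(\nabla h_i(\bar x_i))\bigr) \Bigr].
\]
Each summand is exactly the right-hand side of the constant-$1$ anisotropic descent inequality for $h_i$ relative to $\tilde{\phi}_i$, which by assumption majorizes $h_i(x_i)$. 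Summing over $i$ yields $h(x)=\sum_i h_i(x_i)\le\text{(the above)}$, which is precisely \cref{eq:adescent} for $h$ relative to $\phi$ with $L=1$.

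The argument is essentially bookkeeping, and the only genuinely delicate point is the preliminary verification that the separable sum $\phi=\sum_i L_i^{-1}\star\phi_i$ is a legitimate reference function under \cref{assum:a1} and the correct tracking of how epi-scaling interacts with conjugation and gradients; the core descent estimate itself falls straight out of separability. An alternative, slightly more structural route bypasses the explicit computation by appealing to \cref{thm:phi_convex_asmooth}: each $h_i=\xi_i\infconv\tilde{\phi}_i$ for some proper lsc $\xi_i$, infimal convolution commutes with separable sums so that $h=\bigl(\sum_i\xi_i\bigr)\infconv\phi$, and one concludes via the implication \cref{thm:phi_convex_asmooth:infconv}$\Rightarrow$\cref{thm:phi_convex_asmooth:asmooth} after checking $h\in\mathcal{C}^1(\bR^n)$ and $\ran\nabla h\subseteq\ran\nabla\phi$ as above.
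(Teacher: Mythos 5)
Your proof is correct and follows the same route as the paper, whose entire argument is ``the result follows by summing the individual descent inequalities''; your additional bookkeeping (normalizing to constant $1$ via the remark after \cref{def:adescent}, verifying that the separable sum $\phi$ satisfies \cref{assum:a1}, and checking the constraint qualification blockwise) is exactly what the paper leaves implicit. Nothing is missing, and the alternative infimal-convolution route you sketch via \cref{thm:phi_convex_asmooth} would also work but is not needed.
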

\begin{proof}
The result follows by summing the individual descent inequalities.
\ifx\ifsvjour\true
\qed
\fi
\end{proof}

We have the following elementary property:
\begin{proposition}[pointwise scaling and tilting]
Let $f \in\mathcal{C}^1(\bR^n)$ have the anisotropic descent property relative to $\phi$ with constant $L$ and let $c \in \bR^n$ and $\alpha >0$. Then $\alpha f+\langle c,\cdot \rangle$ has the anisotropic descent property relative to $\alpha \phi+\langle c,\cdot \rangle$ with constant $L$.
\end{proposition}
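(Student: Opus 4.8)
The plan is to reduce the anisotropic descent inequality for $\alpha h + \langle c,\cdot\rangle$ relative to $\alpha\phi + \langle c,\cdot\rangle$ directly to the one assumed for $h$ relative to $\phi$ by expanding both sides. First I would record that $\alpha\phi + \langle c,\cdot\rangle$ still complies with \cref{assum:a1}: a positive scaling together with an additive linear tilt preserves full domain, essential smoothness and essential strict convexity, so the descent property relative to it is well posed. The constraint qualification \cref{eq:cq_adescent} is then immediate: since $\nabla(\alpha h + \langle c,\cdot\rangle) = \alpha\nabla h + c$ and $\nabla(\alpha\phi + \langle c,\cdot\rangle) = \alpha\nabla\phi + c$, the inclusion $\ran\nabla h \subseteq \ran\nabla\phi$ gives $\ran\nabla(\alpha h + \langle c,\cdot\rangle) = \alpha\ran\nabla h + c \subseteq \alpha\ran\nabla\phi + c = \ran\nabla(\alpha\phi + \langle c,\cdot\rangle)$.

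Two elementary identities drive the argument. The first concerns the dual gradient. Since $\alpha\phi + \langle c,\cdot\rangle$ is Legendre, $\nabla(\alpha\phi + \langle c,\cdot\rangle)^* = (\nabla(\alpha\phi + \langle c,\cdot\rangle))^{-1}$, and solving $\alpha\nabla\phi(w) + c = y$ for $w$ yields $\nabla(\alpha\phi + \langle c,\cdot\rangle)^*(y) = \nabla\phi^*((y - c)/\alpha)$. Evaluating at $y = \nabla(\alpha h + \langle c,\cdot\rangle)(\bar x) = \alpha\nabla h(\bar x) + c$, the tilt and the scaling cancel exactly, giving the crucial fact that the shift vector is unchanged: $\nabla(\alpha\phi + \langle c,\cdot\rangle)^*(\nabla(\alpha h + \langle c,\cdot\rangle)(\bar x)) = \nabla\phi^*(\nabla h(\bar x))$. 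The second identity is the behaviour of the tilt under epi-scaling: $\tfrac1L \star (\alpha\phi + \langle c,\cdot\rangle)(z) = \tfrac1L(\alpha\phi(Lz) + \langle c, Lz\rangle) = \alpha\,(\tfrac1L \star \phi)(z) + \langle c, z\rangle$, so the epi-scaled tilted reference splits into $\alpha$ times the epi-scaled $\phi$ plus the same linear term.

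With $p := L^{-1}\nabla\phi^*(\nabla h(\bar x))$ now serving as the common shift vector for both functions, I would substitute the second identity into the right-hand side of \cref{eq:adescent} for $\alpha h + \langle c,\cdot\rangle$. The two linear contributions, at $x - \bar x + p$ and at $p$, differ by $\langle c, x - \bar x\rangle$, which combines with the tilt $\langle c, \bar x\rangle$ inside $(\alpha h + \langle c,\cdot\rangle)(\bar x)$ to produce exactly $\langle c, x\rangle$; the remaining terms amount to $\alpha$ times the right-hand side of \cref{eq:adescent} for $h$. Multiplying the assumed inequality $h(x) \le h(\bar x) + (\tfrac1L \star \phi)(x - \bar x + p) - (\tfrac1L \star \phi)(p)$ by $\alpha > 0$ and adding $\langle c, x\rangle$ to both sides then reproduces precisely the desired inequality. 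There is no genuine obstacle here; the only point requiring care is the cancellation of the tilt in the dual-gradient evaluation, which is exactly what guarantees that the shift vector $p$ is literally the same on both sides, so that the assumed descent inequality for $h$ can be invoked verbatim.
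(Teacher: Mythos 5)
Your proof is correct and follows essentially the same route as the paper's one-line argument: compute the dual gradient $\nabla(\alpha\phi+\langle c,\cdot\rangle)^*$, observe that tilt and scaling cancel so the shift vector $L^{-1}\nabla\phi^*(\nabla h(\bar x))$ is literally unchanged, and plug into the definition of the descent inequality. Incidentally, your formula $\nabla(\alpha\phi+\langle c,\cdot\rangle)^*(y)=\nabla\phi^*\bigl((y-c)/\alpha\bigr)$ is the correct one; the paper's displayed expression $\nabla\phi^*((\cdot)\alpha^{-1}-c)$ contains a small typo (the tilt $c$ should also be divided by $\alpha$), which your explicit evaluation at $y=\alpha\nabla h(\bar x)+c$ silently repairs.
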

\begin{proof}
This follows by definition of the anisotropic descent property noting that $(\alpha \phi+\langle c,\cdot \rangle)^*= (\alpha \star \phi^*)(\cdot - c)$ and $\nabla (\alpha \star \phi^*)(\cdot - c) = \nabla \phi^*((\cdot )\alpha^{-1} - c)$.
\ifx\ifsvjour\true
\qed
\fi
\end{proof}

Based on the infimal convolution interpretation of anisotropic smoothness we can prove the following elementary transitivity property of anisotropic smoothness:
\begin{proposition}[transitivity of anisotropic smoothness]
\ifx\ifsvjour\true
 \label[proposition]{thm:transitivity}
\else
 \label{thm:transitivity}
\fi

Let $f \in \mathcal{C}^1(\bR^n)$ and $\phi_1,\phi_2 \in \Gamma_0(\bR^n)$ be Legendre with full domain. Assume that $f$ is anisotropically smooth relative to $\phi_1$ and let $\phi_1$ be anisotropically smooth relative to $\phi_2$. Then $f$ has the anisotropic descent property relative to $\phi_2$.
\end{proposition}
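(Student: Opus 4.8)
The plan is to reduce the statement to the infimal convolution characterization of anisotropic smoothness established in \cref{thm:phi_convex_asmooth:infconv} and then exploit the associativity of the infimal convolution. Since $h$ is anisotropically smooth relative to $\phi_1$ and $\phi_1$ is anisotropically smooth relative to $\phi_2$, \cref{thm:phi_convex_asmooth:infconv} furnishes functions $\xi_1,\xi_2:\bR^n \to \exR$ with
\[
h = \xi_1 \infconv \phi_1 \quad \text{and} \quad \phi_1 = \xi_2 \infconv \phi_2.
\]
First I would substitute the second identity into the first and invoke associativity of the infimal convolution, which holds as an identity of extended-real-valued functions independently of whether the intermediate infima are attained, to obtain
\[
h = \xi_1 \infconv (\xi_2 \infconv \phi_2) = (\xi_1 \infconv \xi_2) \infconv \phi_2.
\]
Setting $\xi := \xi_1 \infconv \xi_2$ exhibits $h$ in the form $\xi \infconv \phi_2$ demanded by \cref{thm:phi_convex_asmooth:infconv}; note that no properness of $\xi$ is needed, as that part of the characterization only requires the existence of \emph{some} $\xi:\bR^n\to\exR$.

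Next I would verify the range condition $\ran \nabla h \subseteq \ran \nabla \phi_2$. This follows by chaining the two constraint qualifications built into the hypotheses: anisotropic smoothness of $h$ relative to $\phi_1$ carries the CQ \cref{eq:cq_adescent} in the form $\ran \nabla h \subseteq \ran \nabla \phi_1$, while anisotropic smoothness of $\phi_1$ relative to $\phi_2$ carries $\ran \nabla \phi_1 \subseteq \ran \nabla \phi_2$; transitivity of set inclusion then yields the desired containment. Because $\phi_1$ and $\phi_2$ are Legendre with full domain, these gradient ranges coincide with $\intr \dom \phi_1^*$ and $\intr \dom \phi_2^*$ respectively, so the range bookkeeping is unambiguous and no boundary pathology arises.

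Finally, since $h \in \mathcal{C}^1(\bR^n)$ is assumed at the outset, all three prerequisites of \cref{thm:phi_convex_asmooth:infconv}---continuous differentiability, the range inclusion, and the infimal convolution representation---are in place, and the reverse implication of that characterization delivers the anisotropic descent property of $h$ relative to $\phi_2$. I do not anticipate a genuine obstacle: the proof is essentially a one-line manipulation once the infimal convolution picture is adopted, and the only points requiring care are that associativity must be applied without presupposing attainment of the inner infima, and that the two range inclusions are composed correctly using the Legendre full-domain hypotheses.
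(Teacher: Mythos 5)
Your proof is correct and follows essentially the same route as the paper's: both invoke the infimal convolution characterization in \cref{thm:phi_convex_asmooth}, use associativity of $\infconv$ to write $h = (\xi_1 \infconv \xi_2) \infconv \phi_2$, chain the two range inclusions $\ran \nabla h \subseteq \ran \nabla \phi_1 \subseteq \ran \nabla \phi_2$, and conclude by the reverse implication of the characterization. Your additional remarks (associativity holding without attainment of inner infima, properness of $\xi$ being unnecessary for this direction) are accurate refinements of the same argument, not a different approach.
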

\begin{proof}
Thanks to \cref{thm:phi_convex_asmooth} anisotropic smoothness of $f$ relative to $\phi_1$ implies that $h = \xi \infconv \phi_1$ for some $\xi:\bR^n \to \exR$ proper lsc and that $\ran \nabla f \subseteq \ran \nabla \phi_1$. Anisotropic smoothness of $\phi_1$ relative to $\phi_2$ implies that $\phi_1 = \psi \infconv \phi_2$ for some $\psi:\bR^n \to \exR$ proper lsc and that $\ran \nabla \phi_1 \subseteq \ran \nabla \phi_2$. Thus we have $f = \xi \infconv (\psi \infconv \phi_2) = (\xi \infconv \psi) \infconv \phi_2$ and $\ran \nabla f \subseteq \ran \nabla \phi_1 \subseteq  \ran \nabla \phi_2$. Again invoking \cref{thm:phi_convex_asmooth} $f$ is anisotropically smooth relative to $\phi_2$.
\ifx\ifsvjour\true
\qed
\fi
\end{proof}

Next we show that anisotropic smoothness with constant $L_1$ is preserved for any $L_2>L_1$. While this is obvious in the Euclidean case, due to the epi-scaling used in anisotropic smoothness, a proof is required in the general case:
\begin{proposition}[monotonicity under epi-scaling]
\ifx\ifsvjour\true
 \label[proposition]{thm:episcaling_asmoothness}
\else
 \label{thm:episcaling_asmoothness}
\fi
Let $f \in \mathcal{C}^1(\bR^n)$ and $\bar x, x \in \bR^n$ with $x \neq \bar x$ and suppose that
\begin{align*}
f(x) &\leq f(\bar x) + \tfrac{1}{L_1} \star \phi(x-\bar x + {L_1}^{-1}\nabla\phi^*(\nabla f(\bar x))) - \tfrac{1}{{L_1}} \star \phi({L_1}^{-1}\nabla\phi^*(\nabla f(\bar x))).
\end{align*}
Then we have for any $L_2>L_1$ that
\begin{align*}
f(x) < f(\bar x) + \tfrac{1}{L_2} \star \phi(x-\bar x + {L_2}^{-1}\nabla\phi^*(\nabla f(\bar x))) - \tfrac{1}{{L_2}} \star \phi({L_2}^{-1}\nabla\phi^*(\nabla f(\bar x))).
\end{align*}
In particular, this implies the following monotonicity property of anisotropic smoothness under epi-scaling: If $f$ has the anisotropic descent property relative to $\phi$ with constant $L_1>0$, then $f$ has the anisotropic descent property relative to $\phi$ for any $L_2>L_1$.
\end{proposition}
A proof is provided in \cref{sec:thm_episcaling_asmoothness}.

Naturally, anisotropic smoothness is closed under epi-addition; see \cite[Corollary 4.9]{laude2021conjugate}. In contrast, the pointwise sum of anisotropically smooth functions is in general not anisotropically smooth, see \cite[Example 4.10]{laude2021conjugate}, unless the dual Bregman divergence is jointly convex: This is a direct consequence of the convexity of the recently introduced Bregman proximal average \cite[Theorem 5.1(i)]{wang2021bregman}.
Next we will generalize this to nonconvex infimal convolutions. In the subsequent section we then consider a refinement for the exponential reference function showing closedness under pointwise conic combinations. This is false in general for Euclidean Lipschitz smoothness with the same Lipschitz constant.

\begin{proposition}[closedness of anisotropic smoothness under pointwise average]
\ifx\ifsvjour\true
 \label[proposition]{thm:closed_addition}
\else
 \label{thm:closed_addition}
\fi

Let $f_i \in \mathcal{C}^1(\bR^n)$ for $1\leq i \leq m$ be anisotropically smooth relative to $\phi$ with constants $L_i$ and assume that $D_{\phi^*}$ is jointly convex. Let $\alpha_i \geq 0$ and $\sum_{i=1}^m \alpha_i = 1$. Then $\sum_{i=1}^m \alpha_i f_i$ is anisotropically smooth relative to $\phi$ with constant $L:=\max_{1 \leq i \leq m} L_i$.
\end{proposition}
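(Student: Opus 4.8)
The plan is to reduce to a common constant, collapse all the per-function majorizers onto a single reference function, and then feed the resulting convex combination of shifted reference functions into the auxiliary result \cref{thm:helper_closed_addition}. First I would invoke \cref{thm:episcaling_asmoothness} to upgrade each $h_i$ from its own constant $L_i$ to the common constant $L=\max_{1\leq i\leq m} L_i$, so that all of them satisfy the anisotropic descent inequality relative to $\phi$ with the \emph{same} constant $L$. Equivalently, setting $\tilde\phi := \tfrac1L \star \phi$, each $h_i$ is anisotropically smooth relative to $\tilde\phi$ with constant $1$ by the remark following \cref{def:adescent}. Since $(\tfrac1L \star \phi)^* = \tfrac1L \phi^*$, the dual Bregman distance obeys $D_{\tilde\phi^*} = \tfrac1L D_{\phi^*}$, which is jointly convex precisely because $D_{\phi^*}$ is; moreover $\tilde\phi$ inherits \cref{assum:a1}. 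This is exactly the setting in which \cref{thm:helper_closed_addition} applies with $\tilde\phi$ in place of $\phi$.

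Next I would fix $\bar x \in \bR^n$ and write out each per-function majorizer. With $\bar y_i := \bar x - \nabla \tilde\phi^*(\nabla h_i(\bar x))$ the descent inequality for $h_i$ relative to $\tilde\phi$ reads $h_i(x) \leq h_i(\bar x) + \tilde\phi(x - \bar y_i) - \tilde\phi(\bar x - \bar y_i)$. Taking the $\alpha_i$-weighted sum, the curvature terms collapse into $H(x) := \sum_{i=1}^m \alpha_i \tilde\phi(x - \bar y_i)$, giving $\sum_i \alpha_i h_i(x) \leq \sum_i \alpha_i h_i(\bar x) + H(x) - H(\bar x)$. By \cref{thm:helper_closed_addition} the function $H$ is itself anisotropically smooth relative to $\tilde\phi$, hence admits its own anisotropic majorizer at $\bar x$, which I would substitute for $H(x)-H(\bar x)$ on the right-hand side.

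The crux is to identify this composite majorizer with the genuine anisotropic upper bound of $\sum_i \alpha_i h_i$ at $\bar x$, which reduces to the gradient identity $\nabla H(\bar x) = \nabla\bigl(\sum_i \alpha_i h_i\bigr)(\bar x)$. I would verify it by direct computation: since $\bar x - \bar y_i = \nabla \tilde\phi^*(\nabla h_i(\bar x))$ and $\nabla h_i(\bar x) \in \ran \nabla h_i \subseteq \ran \nabla \phi = \ran\nabla\tilde\phi = \intr \dom \tilde\phi^*$, the inverse-gradient relation $\nabla\tilde\phi\circ\nabla\tilde\phi^* = \id$ yields $\nabla \tilde\phi(\bar x - \bar y_i) = \nabla h_i(\bar x)$, whence $\nabla H(\bar x) = \sum_i \alpha_i \nabla \tilde\phi(\bar x - \bar y_i) = \sum_i \alpha_i \nabla h_i(\bar x)$. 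With this identity the right-hand side becomes exactly the anisotropic majorizer of $\sum_i \alpha_i h_i$ at $\bar x$ relative to $\tilde\phi$, i.e.\ relative to $\phi$ with constant $L$; as $\bar x$ is arbitrary, this is the claim. I would then close by checking the constraint qualification \cref{eq:cq_adescent}: each $\nabla h_i(x)$ lies in the convex set $\ran \nabla \phi = \intr \dom \phi^*$, so the convex combination $\sum_i \alpha_i \nabla h_i(x)$ does too, giving $\ran \nabla\bigl(\sum_i\alpha_i h_i\bigr) \subseteq \ran \nabla \phi$.

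I expect the main obstacle to lie in the bookkeeping of this nested majorization—first bounding each $h_i$, then re-majorizing the aggregated curvature $H$ via \cref{thm:helper_closed_addition}—and in confirming that it reproduces the anisotropic upper bound for the average at the correct base point $\bar x$ with the correct constant $L$. The joint-convexity hypothesis enters only through \cref{thm:helper_closed_addition}, so no additional work with $\nabla\phi^*$-firm nonexpansiveness is needed at this level, and the reduction to $\max_i L_i$ via \cref{thm:episcaling_asmoothness} is precisely what forces all shifted terms to share the single reference $\tilde\phi$ demanded by the lemma.
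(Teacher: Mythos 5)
Your proof is correct, and it takes a genuinely different route from the paper's within the same scaffolding. Both arguments begin identically—reduction to the common constant $L$ via \cref{thm:episcaling_asmoothness}, followed by an appeal to \cref{thm:helper_closed_addition}, which is the only place joint convexity of $D_{\phi^*}$ enters (your verification that $\tilde\phi := \tfrac1L\star\phi$ inherits \cref{assum:a1} and that $D_{\tilde\phi^*}=\tfrac1L D_{\phi^*}$ remains jointly convex is a detail the paper handles implicitly by normalizing $L=1$). The divergence is in how the helper lemma is exploited. The paper works at the level of representations: it writes each $h_i = \xi_i \infconv L^{-1}\star\phi$ via \cref{thm:phi_convex_asmooth}, aggregates into a joint infimum over $(\bR^n)^m$, uses the helper lemma to re-express $\sum_i \alpha_i\phi(\cdot-y_i)$ as $\inf_v \phi(\cdot-v)+\rho(v;y)$, and interchanges the order of minimization to exhibit $\sum_i\alpha_i h_i$ itself as an infimal convolution $\inf_v \phi(\cdot-v)+\zeta(v)$, closing with \cref{thm:phi_convex_asmooth} again. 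You instead stay at the level of descent inequalities: you sum the per-function majorizers at a fixed $\bar x$, re-majorize the aggregated curvature $H=\sum_i\alpha_i\tilde\phi(\cdot-\bar y_i)$ via the helper lemma's descent property, and identify the composite bound with the genuine anisotropic majorizer of the average through the gradient identity $\nabla H(\bar x)=\sum_i\alpha_i\nabla\tilde\phi(\bar x-\bar y_i)=\sum_i\alpha_i\nabla h_i(\bar x)$, which holds since $\nabla\tilde\phi\circ\nabla\tilde\phi^*=\id$ on $\intr\dom\tilde\phi^*$ and each $\nabla h_i(\bar x)\in\ran\nabla\phi$. Your route avoids the interchange-of-infima step and the bookkeeping of the auxiliary functions $\xi_i$, $\rho$, $\zeta$, and delivers the descent inequality (the definition) directly rather than the infimal-convolution representation; its extra cost is the explicit gradient computation and the observation that the helper lemma's descent inequality can be evaluated at the particular base point $\bar x$. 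Since \cref{thm:phi_convex_asmooth} makes the two endpoints equivalent, and both arguments verify the constraint qualification identically via convexity of $\ran\nabla\phi=\intr\dom\phi^*$, both are complete.
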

The proof adapts some key steps of \cite[Theorem 5.1(ii)]{wang2021bregman} and treats the nonconvex case. A proof is provided in \cref{sec:thm_closed_addition}.

Using the calculus rules devised above we can show in the next example that the loss function in logistic regression is anisotropically smooth relative to a softmax approximation of the absolute value function:
\begin{example}
\ifx\ifsvjour\true
 \label[example]{ex:logistic}
\else
 \label{ex:logistic}
\fi
Let $f(x)=\frac{1}{m} \sum_{i=1}^m \ell(\langle a_i, x \rangle)$ with $a_i \in [-1, 1]^n$ and $\ell(t)=\ln(1 + \exp(t))$. Then $f$ is anisotropically smooth relative to the symmetrized logistic loss $\phi(x)=\sum_{j=1}^n h(x_j)$ for $h(t)=2\ln(1+ \exp(t)) - t$ with constant $L = \max_{1\leq i \leq m} \|a_i\|^2$. Note that for $L\to \infty$, $\frac1L\star h$ converges pointwisely to the absolute value function.
\end{example}
A proof is provided in \Cref{sec:ex_logistic}.

\subsection{Exponential smoothness}
In this subsection we specialize $\phi=\SumExp$ which is an important instance of our framework. In this case we refer to anisotropic smoothness as \emph{exponential smoothness}.
As a motivation we first provide a simple example that shows that relative smoothness in the Bregman sense \cite{birnbaum2011distributed,bauschke2017descent,lu2018relatively} with the exponential lacks certain important invariances:
\begin{example}[lack of scale invariance in Bregman smoothness]
Choose $f(x)=\exp(ax -b)$ for $a > 0$ and $b \in \bR$. Then $f$ is not relatively smooth wrt $\phi=\exp$ for any $L$ unless $a= 1$. To see this we consider the second derivatives:
$f''(x) = a^2 \exp(ax-b)$ and $\phi''(x)=\exp(x)$. Consider $x \mapsto L\phi''(x)-f''(x)=L\exp(x)-a^2\exp(-b)\exp(a x)$. It is easy to see that for any $L>0$, $\exp(ax)$ exhibits a different growth than $\exp(x)$: If $a < 1$ for $x<0$ sufficiently small we have $L\exp(x)-a^2\exp(-b)\exp(a x) < 0$. If $a>1$ for $x>0$ sufficiently large again $L\exp(x)-a^2\exp(-b)\exp(a x) < 0$ and thus $f$ is not Bregman smooth relative to $\exp$.
\end{example}
As we shall see exponential smoothness is a more promising candidate enjoying powerful invariances and a rich calculus.
Considering the Hessian characterization
\begin{align}
\nabla^2 f(x) \preceq L \diag(\nabla f(x)),
\end{align}
with $\ran \nabla f \subseteq \bR^n_{++}$ (given in \cref{thm:phi_convex_asmooth:cvx_c2,rem:sufficiency_hessian}) exponential smoothness bears similarities to $(L_0, L_1)$-smoothness \cite{Zhang2020Why} and a first-order version of \emph{quasi self-concordance} \cite{doikov2023minimizing,bach2010self}. In contrast to $(L_0, L_1)$-smoothness and classical Lipschitz smoothness (with same constants), exponential smoothness is closed under addition and positive scaling--two essential properties it shares with quasi self-concordant functions. This is shown in the next proposition which is as a refinement of \cref{thm:closed_addition}. In \cref{sec:plusminus} we illustrate that the restriction $\ran \nabla f \subseteq \bR^n_{++}$ can be lifted using the plus-minus trick which is easy to implement.

\begin{proposition}[closedness under pointwise conic combinations]
\ifx\ifsvjour\true
 \label[proposition]{thm:closed_addition_exp}
\else
 \label{thm:closed_addition_exp}
\fi
Let $f_i \in \mathcal{C}^1(\bR^n)$ be exponentially smooth with constant $L_i$. Let $\alpha_i \geq 0$ such that $\sum_{i=1}^m \alpha_i >0$. Then $\sum_{i=1}^m \alpha_i f_i$ is exponentially smooth with constant $L:=\max_{1 \leq i \leq m} L_i$.
In particular this shows that $\mathcal{C}_{\Exp}^L:=\{f \in \mathcal{C}^1(\bR^n) : \text{f is exponentially smooth with constant $L$}\}$ is a convex cone.
\end{proposition}
The proof is based on the infimal convolution characterization of exponential smoothness noting the identity $\exp(x-y_1) + \exp(x-y_2)= \exp(x +\ln(\exp(-y_1) + \exp(-y_2)))$. A complete proof is provided in \cref{sec:thm_closed_addition_exp}

Using the invariance under translation and positive scaling \cref{thm:translation_invariance,thm:closed_addition_exp}, we can show the following property:
\begin{proposition}[closedness under affine transformation and scaling]
\ifx\ifsvjour\true
 \label[proposition]{thm:exponential_smoothness_penalty}
\else
\label{thm:exponential_smoothness_penalty}
\fi
Let $h$ be exponentially smooth with constant $L >0$. Let $A \in \bR^{m \times n}_+$ such that each column contains at least one nonzero element, $a >0$ and $b \in \bR^m$.
Then $f(x) := a h(Ax-b)$ is exponentially smooth with constant $M=L\|A\|_{\infty}$ where $\|A\|_{\infty}:=\max_{1 \leq i \leq m} \sum_{j=1}^n |A_{ij}|$ is the $\infty$-norm of $A$.
\end{proposition}
A proof is provided in \cref{sec:thm_exponential_smoothness_penalty}.
The assumption that the entries of $A$ are nonnegative might seem restrictive. However, this can be mitigated by exploiting the plus-minus trick; see \Cref{sec:plusminus}.
\begin{example}[LogSumExp function] 
\ifx\ifsvjour\true
  \label[example]{ex:logsumexp}
\else
  \label{ex:logsumexp}
\fi
Choose $f(x)= \sigma \star \logsumexp(Ax-b)$ for $\logsumexp(z)=\ln(\sum_{j=1}^m \exp(z_j))$, $A \in \bR_+^{m \times n}$ such that each column contains at least one nonzero element and $b\in\bR^m$.
We show that $\logsumexp=\vecmax \infconv \SumExp$ for $\vecmax(z)= \max_{1 \leq j \leq m} z_j$. It can be readily checked that $\vecmax^* = \delta_{\Delta_m}$
 for $\Delta_m:=\{z \in \bR^m_+ : \sum_{j=1}^m z_j = 1\}$ being the $m$-dimensional unit-simplex. Hence
 \begin{align*}
(\vecmax \infconv \SumExp)(z) &= (\delta_{\Delta_m} + H)^*(z) =\sup_{y \in \Delta_m}\langle z,y \rangle - H(y).
 \end{align*}
Introducing a Lagrange multiplier for the sum-to-one constraint it can be readily checked that the supremum is attained for $y_j=\frac{\exp(z_j)}{\sum_{i=1}^m \exp(z_i)}$ which verifies the claim. Thus in light of \cref{thm:phi_convex_asmooth,thm:epi_scaling_asmooth}, $\sigma \star \logsumexp$ is exponentially smooth with constant $1/\sigma$.
Then in light of the previous result $f$ is exponentially smooth with constant $\|A\|_{\infty}/\sigma$.
Note that $f$ is also Lipschitz smooth but with the constant $\|A\|_2^2/\sigma$. Depending on the structure of $A$ this can lead to worse constants.
\end{example}
Clearly, the quadratic function $f=\frac{1}{2}\|x\|_2^2$ is not exponentially smooth. However, as shown in the next example we can decompose $f(x)=\Theta(x)+\Theta(-x)$ for a function $\Theta$ which is exponentially smooth. Then the gradient $\nabla f(x)$ of $f$ at $x$ is decomposed into a positive part $\nabla \Theta(x)$ and a negative part $-\nabla \Theta(-x)$. Invoking the plus-minus trick this allows one to minimize quadratic costs $x \mapsto \|Ax\|_2^2$ using the exponential reference function:
\begin{example}[plus-minus decomposition of quadratic via polylogarithm]
\ifx\ifsvjour\true
  \label[example]{ex:polylog}
\else
  \label{ex:polylog}
\fi
We show that $\frac{1}{2}\|x\|^2$ can be decomposed as $\frac{1}{2}\|x\|^2=\Theta(x)+\Theta(-x)$ for 
\begin{align}
\Theta(x) = \sum_{i=1}^n \vartheta(x_i), \quad \vartheta(t):=\int_0^t \ln(1+\exp(\tau))  \mathrm{d} \tau = -\Li_2(-\exp(t)) - \frac{\pi^2}{12},
\end{align}
where $\Li_2$ denotes the polylogarithm of order $2$. In particular, $\Theta \in \cC^2$ is convex and exponentially smooth with constant $L=1$.
\end{example}
A proof is provided in \cref{sec:ex_polylog}.

\section{Analysis of the anisotropic proximal gradient method} \label{sec:aproxgrad_analysis}
\subsection{Subsequential convergence and linesearch}
Unless stated otherwise, in this section we assume validity of \cref{assum:a1,assum:a2,assum:a3,assum:a4,assum:a5}. Furthermore recall the definition of the threshold of anisotropic prox-boundedness $\lambda_g$ and assume that $\lambda < \lambda_g$.
The main goal of this subsection is to analyse the subsequential convergence of the anisotropic proximal gradient method. For that purpose we shall introduce the following regularized gap function as a measure of stationarity: 
\begin{align}
\gap{\phi}{F}(\bar x, \lambda) := \tfrac{1}{\lambda}\big(F(\bar x) - F_{\lambda}(\bar x) \big),
\end{align}
for any $\lambda < \lambda_g$ and
\begin{align}
F_{\lambda}(\bar x) &:= \inf_{x \in \bR^n} f(\bar x) + \lambda \star \phi(x-\bar x + \lambda \nabla\phi^*(\nabla f(\bar x))) - \lambda \star \phi(\lambda \nabla\phi^*(\nabla f(\bar x))) + g(x) \notag \\
&= f(\bar x) + (g \infconv \lambda \star \phi_-)(\bar x - \lambda \nabla\phi^*(\nabla f(\bar x)))  - \lambda  \star \phi(\lambda \nabla\phi^*(\nabla f(\bar x))),
\end{align}
is an anisotropic generalization of the forward-backward envelope \cite{patrinos2013proximal,stella2017forward}. The regularized gap function is a generalization of \cite[Equation 13]{karimi2016linear} to the anisotropic case and is closely related to regularized gap functions for solving variational inequalities \cite{fukushima1992equivalent}. The particular scaling $\frac{1}{\lambda}$ will be important subsequently for proving a linear convergence result under anisotropic proximal gradient dominance.

Next we show the following key properties of $\gap{\phi}{F}(\bar x, \lambda)$ to justify its choice as a measure of stationarity:
\begin{lemma}
\ifx\ifsvjour\true
 \label[lemma]{thm:continuity_aprox_grad}
\else
 \label{thm:continuity_aprox_grad}
\fi
For any $\lambda < \lambda_g$ the gap function $\gap{\phi}{F}(\cdot, \lambda) \geq 0$ is lsc and for any $x^\star \in \bR^n$ with $\gap{\phi}{F}(x^\star, \lambda) = 0$ we have that $x^\star$ is a stationary point of $F$, i.e., $0 \in \nabla f(x^\star) + \partial g(x^\star)$. If, in addition, $g\equiv 0$, the gap simplifies to $\gap{\phi}{F}(\cdot, \lambda)=D_{\phi^*}(0, \nabla f(\cdot))$.
\end{lemma}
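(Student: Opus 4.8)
The plan is to establish the three assertions separately, each following from the definition of $F_\lambda$ once the smoothness of $\lambda\star\phi$ is exploited. For nonnegativity I would evaluate the infimum defining $F_\lambda(\bar x)$ at the admissible point $x=\bar x$: there the two epi-scaled terms cancel and only $g(\bar x)$ survives, so $F_\lambda(\bar x)\le f(\bar x)+g(\bar x)=F(\bar x)$ and hence $\gap{\phi}{F}(\bar x,\lambda)\ge 0$. For lower semicontinuity I would show that $F_\lambda$ is in fact finite-valued and continuous. Continuity follows from the second representation of $F_\lambda$: the maps $f$, $\nabla f$, and $\nabla\phi^*$ restricted to $\ran\nabla f\subseteq\ran\nabla\phi=\intr\dom\phi^*$ are continuous, while the left anisotropic Moreau envelope $g\infconv\lambda\star\phi_-$ is continuous because $\lambda<\lambda_g$ forces level-boundedness of $\xi(x,y)=g(x)+\lambda\star\phi(x-y)$ in $x$ locally uniformly in $y$ (cf.\ \cite[Theorem 1.17]{RoWe98}). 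Finiteness from below, $F_\lambda\ge\inf F>-\infty$, comes from \cref{assum:a4} together with the fact that for $\lambda\le 1/L$ the anisotropic majorizer dominates $f$ (\cref{thm:episcaling_asmoothness}), so the inner objective is bounded below by $F$; finiteness from above uses properness of $g$. Since $F=f+g$ is lsc and $F_\lambda$ is continuous, $\gap{\phi}{F}(\cdot,\lambda)=\tfrac1\lambda(F-F_\lambda)$ is lsc.

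For the stationarity claim, $\gap{\phi}{F}(x^\star,\lambda)=0$ is equivalent to $F(x^\star)=F_\lambda(x^\star)$. As the value $F(x^\star)$ is already attained at $x=x^\star$ inside the infimum defining $F_\lambda(x^\star)$, the point $x^\star$ is a global minimizer of $x\mapsto\lambda\star\phi(x-y^\star)+g(x)$, where $y^\star:=x^\star-\lambda\nabla\phi^*(\nabla f(x^\star))$. Applying Fermat's rule together with the sum rule for the limiting subdifferential of a $\mathcal{C}^1$ function plus an lsc function \cite[Exercise 8.8]{RoWe98} yields $0\in\nabla[\lambda\star\phi(\cdot-y^\star)](x^\star)+\partial g(x^\star)$. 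Finally $\nabla[\lambda\star\phi(\cdot-y^\star)](x^\star)=\nabla\phi\big((x^\star-y^\star)/\lambda\big)=\nabla\phi(\nabla\phi^*(\nabla f(x^\star)))=\nabla f(x^\star)$, using the definition of $y^\star$ and $\nabla\phi\circ\nabla\phi^*=\id$, so that $0\in\nabla f(x^\star)+\partial g(x^\star)$.

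For $g\equiv 0$ I would compute $F_\lambda$ in closed form. Since $x-\bar x+\lambda\nabla\phi^*(\nabla f(\bar x))$ ranges over all of $\bR^n$ as $x$ does, the inner infimum equals $\inf(\lambda\star\phi)=-(\lambda\star\phi)^*(0)=-\lambda\phi^*(0)$, using $(\lambda\star\phi)^*=\lambda\phi^*$. Substituting and simplifying $\tfrac1\lambda\,\lambda\star\phi(\lambda\nabla\phi^*(\nabla f(\bar x)))=\phi(\nabla\phi^*(\nabla f(\bar x)))$ gives $\gap{\phi}{F}(\bar x,\lambda)=\phi^*(0)+\phi(\nabla\phi^*(\nabla f(\bar x)))$. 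On the other hand, writing $p=\nabla f(\bar x)\in\intr\dom\phi^*$ and inserting the Fenchel--Young equality $\phi^*(p)=\langle\nabla\phi^*(p),p\rangle-\phi(\nabla\phi^*(p))$ into the definition of the Bregman distance shows $D_{\phi^*}(0,p)=\phi^*(0)+\phi(\nabla\phi^*(p))$, which coincides with the expression just obtained.

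I expect the main obstacle to be the lower semicontinuity in the first part, since it rests on continuity rather than mere lower semicontinuity of the anisotropic Moreau envelope, and this is only available through the prox-boundedness threshold condition $\lambda<\lambda_g$ and the attendant level-boundedness; one must also check that $F_\lambda$ stays finite, which combines the descent inequality for $\lambda\le 1/L$ with \cref{assum:a4} and properness of $g$. The stationarity argument and the $g\equiv 0$ identity are then direct consequences of the cancellation at $x=\bar x$ and of conjugate duality.
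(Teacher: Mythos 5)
Your proposal is correct and follows essentially the same route as the paper's proof: nonnegativity by evaluating the infimum at $x=\bar x$, stationarity by noting the infimum is attained at $x^\star$ and applying Fermat's rule with $\nabla\phi(\nabla\phi^*(\nabla f(x^\star)))=\nabla f(x^\star)$, and lower semicontinuity via continuity of $g\infconv\lambda\star\phi_-$ from level-boundedness and \cite[Theorem 1.17]{RoWe98} combined with lower semicontinuity of $F$. Two minor remarks: for the $g\equiv 0$ identity the paper instead identifies the explicit minimizer $\hat x$ via $\nabla\phi^*(0)=\lambda^{-1}(\hat x-\bar x)+\nabla\phi^*(\nabla f(\bar x))$ and invokes \cref{thm:dual_bregman}, whereas your computation via $\inf(\lambda\star\phi)=-\lambda\phi^*(0)$ and Fenchel--Young is an equally valid (arguably cleaner) shortcut; and your lower bound on $F_\lambda$ via the descent inequality and $\lambda\le 1/L$ is superfluous and would artificially restrict the claimed range $\lambda<\lambda_g$ --- finiteness of the envelope already follows from attainment under level-boundedness together with properness of $g$ and $\dom\phi=\bR^n$, so nothing in your argument actually depends on that step.
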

\begin{proof}
Let $\lambda < \lambda_g$. Let $\bar x \in \bR^n$ and define
$$
\xi(x, \bar x) := f(\bar x) + \lambda \star \phi(x-\bar x + \lambda \nabla\phi^*(\nabla f(\bar x))) - \lambda \star \phi(\lambda \nabla\phi^*(\nabla f(\bar x))) + g(x).
$$
Then $F_\lambda(\bar x)= \inf_{x \in \bR^n} \xi(x, \bar x) \leq \xi(\bar x, \bar x)= F(\bar x)$ implying that $\gap{\phi}{F}(\bar x, \lambda) \geq 0$.
Let $x^\star \in \bR^n$ and assume that $\gap{\phi}{F}(x^\star, \lambda ) = 0$, i.e., $F(x^\star) = F_{\lambda}(x^\star)$. Therefore $F_{\lambda}(x^\star) = \inf_{x \in \bR^n} \xi(x, x^\star) \leq \xi(x^\star, x^\star) = F(x^\star) = F_{\lambda}(x^\star)$ and thus the infimum in the definition of $F_{\lambda}(x^\star)$ is attained at $x^\star$.
By Fermat's rule \cite[Theorem 10.1]{RoWe98} we have
$$
0 \in \partial \xi(\cdot, x^\star)(x^\star) = \nabla \phi(\nabla \phi^*(\nabla f(x^\star))) + \partial g(x^\star) = \nabla f(x^\star) + \partial g(x^\star).
$$
Since by assumption for any $y \in \bR^n$, $g + \lambda \star \phi(\cdot-y)$ is level-bounded, for any $y \in \bR^n$ there exists $x \in \bR^n$ such that $(g \infconv \lambda \star \phi_-)(y)=g(x) + \lambda \star \phi(x-y)$ is finite. Furthermore, $-(g \infconv \lambda \star \phi_-)$ is a pointwise supremum over continuous functions and hence lsc. Since $\id - \lambda\nabla \phi^* \circ \nabla f$ is continuous, $-F_\lambda$ is lsc as well. Consequently, since $F=f+g$ is lsc we have that $\gap{\phi}{F}(\bar x, \lambda) := \frac{1}{\lambda}(F(\bar x) - F_{\lambda}(\bar x))$ is lsc.

If, in particular, $g\equiv 0$ we have for any $\bar x \in \bR^n$
\begin{align*}
\gap{\phi}{F}(\bar x, \lambda) &= -\frac{1}{\lambda} \min_{x \in \bR^n} \lambda \star \phi(x-\bar x + \lambda \nabla\phi^*(\nabla f(\bar x))) - \lambda  \star \phi(\lambda \nabla\phi^*(\nabla f(\bar x))) \\
&= \phi(\nabla\phi^*(\nabla f(\bar x))) -\min_{x \in \bR^n} \phi(\lambda^{-1}(x - \bar x) + \nabla\phi^*(\nabla f(\bar x))) \\
&=\phi(\nabla\phi^*(\nabla f(\bar x))) - \phi(\nabla \phi^*(0)) - \langle \nabla \phi(\nabla \phi^*(0)), \nabla\phi^*(\nabla f(\bar x)) - \nabla \phi^*(0) \rangle \\
&=D_{\phi}(\nabla \phi^*(\nabla f(\bar x)), \nabla \phi^*(0))= D_{\phi^*}(0, \nabla f(\bar x)),
\end{align*}
where the identities hold due to the constraint qualification $0\in\intr\dom\phi^*$ and the fact that $\hat x$ minimizes $x \mapsto \phi(\lambda^{-1}(x - \bar x) + \nabla\phi^*(\nabla f(\bar x)))$ if and only if $\nabla \phi^*(0) = \lambda^{-1}(\hat x - \bar x) + \nabla\phi^*(\nabla f(\bar x))$, the identity $\nabla \phi(\nabla \phi^*(0))=0$ and the identity in \cref{thm:dual_bregman}.
\ifx\ifsvjour\true
\qed
\fi
\end{proof}
We first prove the following sufficient decrease property:
\begin{lemma}
\ifx\ifsvjour\true
 \label[lemma]{thm:sufficient_descent}
\else
 \label{thm:sufficient_descent}
\fi
Let $\{x^k\}_{k =0}^\infty$ be the sequence of iterates generated by \cref{alg:aproxgrad}.
Then the following sufficient decrease property holds true for all $k \in \bN_0$:
\begin{align}
F(x^{k+1}) \leq F(x^k) - \lambda \gap{\phi}{F}(x^k, \lambda).
\end{align}
\end{lemma}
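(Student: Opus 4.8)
The plan is to reduce the stated inequality to the single claim $F(x^{k+1}) \leq F_\lambda(x^k)$. Indeed, by the very definition of the gap function one has $\lambda \gap{\phi}{F}(x^k, \lambda) = F(x^k) - F_\lambda(x^k)$, so the sufficient decrease property is equivalent to $F(x^{k+1}) \leq F_\lambda(x^k)$. First I would exploit the forward variable $y^k = x^k - \lambda \nabla \phi^*(\nabla f(x^k))$ from \cref{eq:forward} to rewrite the shifted argument occurring in $F_\lambda$ as $x - x^k + \lambda \nabla\phi^*(\nabla f(x^k)) = x - y^k$. With this identification the objective defining $F_\lambda(x^k)$, namely
\[
x \mapsto f(x^k) + \lambda \star \phi(x - y^k) - \lambda \star \phi(\lambda \nabla\phi^*(\nabla f(x^k))) + g(x),
\]
differs from the backward-step objective $g(x) + \lambda \star \phi(x - y^k)$ of \cref{eq:backward} only by additive constants, which do not affect the set of minimizers. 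Hence $x^{k+1}$ attains the infimum defining $F_\lambda(x^k)$ (its existence is guaranteed by the anisotropic prox-boundedness $\lambda < \lambda_g$ in \cref{assum:a5}), so that $F_\lambda(x^k) = f(x^k) + \lambda \star \phi(x^{k+1} - y^k) - \lambda \star \phi(\lambda \nabla\phi^*(\nabla f(x^k))) + g(x^{k+1})$.

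The second step is to upper bound $f(x^{k+1})$ via the anisotropic descent inequality. Because $\lambda \leq 1/L$ by \cref{assum:a5}, we have $1/\lambda \geq L$, and \cref{thm:episcaling_asmoothness} ensures that $f$ enjoys the anisotropic descent property relative to $\phi$ with the possibly larger constant $1/\lambda$ (for $\lambda < 1/L$ this is exactly the monotonicity statement of that proposition, while for $\lambda = 1/L$ it is the original property \cref{assum:a2}). Applying \cref{eq:adescent} with constant $1/\lambda$ at $\bar x = x^k$, $x = x^{k+1}$, and once more invoking $x^{k+1} - x^k + \lambda \nabla\phi^*(\nabla f(x^k)) = x^{k+1} - y^k$, yields
\[
f(x^{k+1}) \leq f(x^k) + \lambda \star \phi(x^{k+1} - y^k) - \lambda \star \phi(\lambda \nabla\phi^*(\nabla f(x^k))).
\]

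Finally I would add $g(x^{k+1})$ to both sides of this inequality. The resulting right-hand side is precisely the expression for $F_\lambda(x^k)$ computed in the first step, so that $F(x^{k+1}) = f(x^{k+1}) + g(x^{k+1}) \leq F_\lambda(x^k) = F(x^k) - \lambda \gap{\phi}{F}(x^k, \lambda)$, as asserted. The only genuinely nontrivial point — and the sole place where the standing assumptions do real work — is the passage from the descent constant $L$ to the step-size-adapted constant $1/\lambda$ via \cref{thm:episcaling_asmoothness}; the remainder is bookkeeping around the translation $y^k$ and the identification of the backward step as the minimizer realizing $F_\lambda(x^k)$.
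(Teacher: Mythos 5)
Your proposal is correct and follows essentially the same route as the paper's proof: both rest on the anisotropic descent inequality upgraded to the step-size constant $1/\lambda$ via \cref{thm:episcaling_asmoothness}, the addition of $g(x^{k+1})$, and the identification of $x^{k+1}$ as the minimizer realizing $F_\lambda(x^k)$ since the backward-step objective differs from it only by additive constants. The only cosmetic difference is ordering — the paper first applies the descent inequality with constant $L$ and then invokes the monotonicity statement of \cref{thm:episcaling_asmoothness}, whereas you apply the descent inequality directly at constant $1/\lambda$ — which is an equivalent use of the same proposition.
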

\begin{proof}
We have:
\begin{align*}
F(x^{k+1}) &= f(x^{k+1}) +g(x^{k+1}) \\
&\leq f(x^k) + \tfrac{1}{L} \star \phi(x^{k+1} - x^k + L^{-1}\nabla \phi^*(\nabla f(x^k)))- \tfrac{1}{L} \star \phi(L^{-1}\nabla \phi^*(\nabla f(x^k))) + g(x^{k+1}) \\
&\leq f(x^k) + \lambda \star \phi(x^{k+1} - x^k + \lambda \nabla \phi^*(\nabla f(x^k))) - \lambda \star \phi(\lambda \nabla \phi^*(\nabla f(x^k))) + g(x^{k+1}) \\
&= F_{\lambda}(x^k) = F(x^k) - (F(x^k) -F_{\lambda}(x^k)) = F(x^k) - \lambda \gap{\phi}{F}(x^k, \lambda),
\end{align*}
where the first inequality follows by the anisotropic descent inequality, the second inequality by \cref{thm:episcaling_asmoothness} and the choice $\lambda^{-1} \geq L$ and the second equality by the update of $x^{k+1}$ and the definition of $F_{\lambda}(x^k)$.
\ifx\ifsvjour\true
\qed
\fi
\end{proof}

\begin{theorem}
\ifx\ifsvjour\true
 \label[theorem]{thm:asymptotic_convergence}
\else
 \label{thm:asymptotic_convergence}
\fi

Let $\{x^k\}_{k =0}^\infty$ be the sequence of iterates generated by \cref{alg:aproxgrad}.
The sequence $\{F(x^k)\}_{k =0}^\infty$ is nonincreasing and convergent. In addition, every limit point $x^\star$ of $\{x^k\}_{k =0}^\infty$ is a stationary point of $F$, i.e., $0 \in \nabla f(x^\star) + \partial g(x^\star)$.
In particular the minimum over the past regularized gaps vanishes sublinearly at rate $\mathcal{O}(1/K)$:
$$
\min_{k \in \{0,1,\ldots, K-1\}} \gap{\phi}{F}(x^k, \lambda) \leq \frac{1}{\lambda K} \big(F(x^0) - \inf F \big).
$$
\end{theorem}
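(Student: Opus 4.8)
The plan is to assemble all three assertions from the sufficient decrease property of \cref{thm:sufficient_descent} together with the nonnegativity, lower semicontinuity, and stationarity characterization of the gap established in \cref{thm:continuity_aprox_grad}; the analytic content is already carried by those two lemmas, so this is largely a packaging exercise. For the monotonicity and convergence of $\{F(x^k)\}$, I would observe that \cref{thm:continuity_aprox_grad} gives $\gap{\phi}{F}(x^k, \lambda) \geq 0$, so the inequality $F(x^{k+1}) \leq F(x^k) - \lambda \gap{\phi}{F}(x^k, \lambda)$ from \cref{thm:sufficient_descent} immediately yields $F(x^{k+1}) \leq F(x^k)$. Being nonincreasing and bounded below by $\inf F > -\infty$ (\cref{assum:a4}), the sequence converges.

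For the sublinear rate, I would telescope the sufficient decrease inequalities over $k = 0, \dots, K-1$ to obtain
$$\lambda \sum_{k=0}^{K-1} \gap{\phi}{F}(x^k, \lambda) \leq F(x^0) - F(x^K) \leq F(x^0) - \inf F,$$
using $F(x^K) \geq \inf F$ in the last step. Since the minimum of the summands is at most their average, dividing by $K$ gives the stated $\mathcal{O}(1/K)$ bound. Letting $K \to \infty$ in the same estimate shows that the nonnegative series $\sum_k \gap{\phi}{F}(x^k, \lambda)$ is summable, hence $\gap{\phi}{F}(x^k, \lambda) \to 0$.

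For stationarity of a limit point $x^\star$, I would pass to a convergent subsequence $x^{k_j} \to x^\star$. Because the full gap sequence tends to $0$, so does the subsequence, and lower semicontinuity of $\gap{\phi}{F}(\cdot, \lambda)$ (\cref{thm:continuity_aprox_grad}) gives $\gap{\phi}{F}(x^\star, \lambda) \leq \liminf_j \gap{\phi}{F}(x^{k_j}, \lambda) = 0$. Combined with nonnegativity this forces $\gap{\phi}{F}(x^\star, \lambda) = 0$, whereupon the stationarity characterization in \cref{thm:continuity_aprox_grad} yields $0 \in \nabla f(x^\star) + \partial g(x^\star)$.

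The only step requiring genuine care is this last passage to the limit: one must invoke the \emph{summability} of the gaps---not merely the $\mathcal{O}(1/K)$ decay of their minimum---so that the \emph{entire} gap sequence vanishes and the subsequential $\liminf$ is truly $0$ before lower semicontinuity is applied. This is the main, albeit mild, obstacle.
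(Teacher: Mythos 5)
Your proposal is correct and matches the paper's own proof essentially step for step: sufficient decrease plus gap nonnegativity gives monotonicity and convergence, telescoping gives summability of the gaps (hence the full gap sequence tends to zero, exactly as in the paper's \cref{eq:limit_convergence}), lower semicontinuity handles the limit points, and the min-versus-average bound yields the $\mathcal{O}(1/K)$ rate. Nothing to add.
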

\begin{proof}
Thanks to \cref{thm:sufficient_descent} we have
\begin{align*}
F(x^{k+1}) &\leq F(x^k) - \lambda \gap{\phi}{F}(x^k, \lambda).
\end{align*}
This implies that $F(x^{k})$ is monotonically decreasing. Since $-\infty < \inf F \leq F(x^{k})$ is bounded from below this means that $F(x^{k}) \to F^*$ converges to some $F^*$.
Summing the inequality we obtain since $-\infty < \inf F \leq F(x^{K})$ is bounded from below:
\begin{align}
-\infty < \inf F - F(x^0) &\leq F(x^{K}) -F(x^0) \notag\\
&= \sum_{k=0}^{K-1} F(x^{k+1}) -F(x^k)\leq - \lambda \sum_{k=0}^{K-1} \gap{\phi}{F}(x^k, \lambda) \label{eq:inequality_telescope}.
\end{align}
This implies that $\{\sum_{k=0}^{K-1} \gap{\phi}{F}(x^k, \lambda) \}_{K \in \bN}$ is convergent and thus we have
\begin{align} \label{eq:limit_convergence}
\lim_{k\to \infty} \gap{\phi}{F}(x^k, \lambda) = 0.
\end{align}
Let $x^\star$ be a limit point of $\{x^k\}_{k =0}^\infty$. Let $x^{k_j} \to x^\star$ be a corresponding subsequence.
In view of \cref{thm:continuity_aprox_grad} $\gap{\phi}{F}(\cdot, \lambda)$ is lsc and thus we have
$$
0 \leq \gap{\phi}{F}(x^\star, \lambda) \leq \lim_{j \to \infty} \gap{\phi}{F}(x^{k_j}, \lambda) = 0,
$$
where the last equality follows from \cref{eq:limit_convergence}.
Then invoking \cref{thm:continuity_aprox_grad} again this implies that $x^\star$ is a stationary point of $f+g$, i.e., $0 \in \nabla f(x^\star) + \partial g(x^\star)$.

Thanks to \cref{eq:inequality_telescope} we have
$$
K \cdot \min_{k \in \{0,1,\ldots, K-1\}} \gap{\phi}{F}(x^k, \lambda) \leq \sum_{k=0}^{K-1} \gap{\phi}{F}(x^k, \lambda) \leq \frac{1}{\lambda}(F(x^0) - \inf F).
$$
Dividing by $K$ we obtain the claimed sublinear rate.
\ifx\ifsvjour\true
\qed
\fi
\end{proof}
\begin{remark}
To our knowledge feasibility of the step-size $\lambda=1/L$ for nonconvex $g$ is also new in the Euclidean case as existing results typically require $\lambda <1/L$. In the Euclidean case (uniform) level-boundedness is implied by prox-boundedness with some threshold $\lambda_g$; see \cite[Theorem 1.25]{RoWe98}. In the anisotropic case this holds for any $\lambda >0$ if for example $g$ (or $\phi$) is bounded from below and $\phi$ (or $g$) is coercive in which case we have $\lambda_g = \infty$.
\end{remark}


If $L$ is unknown (or only an over-estimate is available) we can still apply our algorithm by estimating a (sharper) step-size $\lambda_k$ in each iteration $k$ using a backtracking linesearch: 
Given $0<\lambda_{k-1}< \lambda_g$ and a constant $0<\alpha < 1$ we can backtrack $\lambda_k = \alpha^t \lambda_{k-1}$ until for some $t \in \bN_0$ the descent inequality \cref{eq:adescent} with constant $L=1/\lambda_k$ is valid at $x^k$ and $x^{k+1}$. Thanks to the scaling property of the anisotropic descent inequality \cref{thm:episcaling_asmoothness}, the linesearch always terminates with some $\lambda_k>0$.
The complete scheme is listed in \cref{alg:linesearch_aproxgrad}.

\begin{algorithm}[H]
\caption{Linesearch anisotropic proximal gradient (LS-anisoPG$^\alpha$)}
\label{alg:linesearch_aproxgrad}
\begin{algorithmic}
\REQUIRE Let $0< \alpha < 1$ and choose some $\lambda_g >\lambda_{-1} >0$. Let $x^0\in \bR^n$.
\FORALL{$k=0, 1, \ldots$}
\STATE $t\gets0$
\REPEAT
   \STATE $\lambda_k\gets\alpha^t \cdot \lambda_{k-1}$
   \STATE $y^k\gets x^{k} - \lambda_k \nabla \phi^*(\nabla f(x^k))$
   \STATE $x^{k+1} \gets\argmin_{x \in \bR^n} \;\{\lambda_k \star \phi(x - y^k) + g(x)\}$
   \STATE $t\gets t+1$
\UNTIL{$f(x^{k+1}) \leq \lambda_k \star \phi(x^{k+1} - y^k) -\lambda_k \star \phi(\lambda_k \nabla \phi^*(\nabla f(x^k))) + f(x^k)$}
\IF{$\tfrac{\lambda_k}{\alpha} < \lambda_g$}
\STATE $\lambda_{k} \gets \tfrac{\lambda_k}{\alpha}$
\ENDIF
\ENDFOR
\end{algorithmic}
\end{algorithm}

In the following theorem we show that \cref{alg:linesearch_aproxgrad} converges subsequentially:
\begin{theorem}[convergence under backtracking linesearch]
\ifx\ifsvjour\true
 \label[theorem]{thm:linesearch}
\else
 \label{thm:linesearch}
\fi
Let $\{x^k\}_{k =0}^\infty$ be the sequence of iterates generated by \cref{alg:linesearch_aproxgrad} and denote by $\{\lambda_k\}_{k =0}^\infty$ the corresponding sequence of step-sizes.
Then the sequence $\{F(x^k)\}_{k =0}^\infty$ is nonincreasing and convergent. In addition, every limit point $x^\star$ of $\{x^k\}_{k =0}^\infty$ is a stationary point of $F$, i.e., $0 \in \nabla f(x^\star) + \partial g(x^\star)$.
In particular the minimum over the past regularized gaps vanishes sublinearly at rate $\mathcal{O}(1/K)$:
$$
\min_{k \in \{0,1,\ldots, K-1\}} \gap{\phi}{F}(x^k, \lambda_{\min}) \leq \frac{1}{K\lambda_{\min}} \big(F(x^0) - \inf F \big),
$$
where $\lambda_{\min}:=\min \{\lambda_{-1}, \alpha/L \}$.
\end{theorem}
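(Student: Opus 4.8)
The plan is to follow the template of the constant step-size analysis in \Cref{thm:asymptotic_convergence}, inserting two extra ingredients to absorb the varying step-sizes $\lambda_k$: uniform two-sided bounds $\lambda_{\min} \leq \lambda_k \leq \lambda_{\max}$, and a monotonicity property of the regularized gap in its step-size argument. First I would pin down the step-sizes. The exit test of the inner loop is exactly the anisotropic descent inequality \cref{eq:adescent} with constant $L = 1/\lambda_k$ evaluated at $\bar x = x^k$, $x = x^{k+1}$, once the shift is rewritten via $y^k = x^k - \lambda_k \nabla\phi^*(\nabla f(x^k))$. By \cref{thm:episcaling_asmoothness} this inequality is guaranteed as soon as $1/\lambda_k \geq L$, i.e. $\lambda_k \leq 1/L$, so the backtracking $\lambda_k = \alpha^t \lambda_{\max}$ terminates once $\alpha^t\lambda_{\max} \leq 1/L$. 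Inspecting the last rejected value (whose failure forces $\alpha^{t-1}\lambda_{\max} > 1/L$) shows the accepted step obeys $\lambda_k \geq \min\{\lambda_{\max}, \alpha/L\} = \lambda_{\min}$, while trivially $\lambda_k \leq \lambda_{\max} < \lambda_g$. Hence $\lambda_{\min} \leq \lambda_k \leq \lambda_{\max} < \lambda_g$ for every $k$, which also keeps the evaluation point $\lambda_{\min}$ below the prox-boundedness threshold so that \cref{thm:continuity_aprox_grad} applies there.

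Next I would reproduce the sufficient-decrease estimate of \cref{thm:sufficient_descent} verbatim, but with $\lambda$ replaced by the accepted $\lambda_k$: since the exit test is precisely the descent inequality at constant $1/\lambda_k$, combining it with optimality of $x^{k+1}$ in the backward step \cref{eq:backward} and the definition of $F_{\lambda_k}$ yields $F(x^{k+1}) \leq F_{\lambda_k}(x^k) = F(x^k) - \lambda_k \gap{\phi}{F}(x^k, \lambda_k)$. This already gives that $\{F(x^k)\}$ is nonincreasing, and since $\inf F > -\infty$, convergent.

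The crux is to convert the telescoped bound $\sum_{k=0}^{K-1} \lambda_k \gap{\phi}{F}(x^k, \lambda_k) \leq F(x^0) - \inf F$ into a statement at the fixed evaluation point $\lambda_{\min}$. For this I would establish that $\lambda \mapsto \lambda\,\gap{\phi}{F}(\bar x, \lambda) = F(\bar x) - F_\lambda(\bar x)$ is nondecreasing in $\lambda$, equivalently that $F_\lambda(\bar x)$ is nonincreasing in $\lambda$. This is exactly the monotonicity inequality of \cref{thm:episcaling_asmoothness} applied with $h = f$: for $\lambda_1 > \lambda_2$ the $\lambda_1$-majorizer of $f$ at $\bar x$ dominates the $\lambda_2$-majorizer pointwise in $x$ (larger $L$ produces the larger upper bound), and adding $g$ and minimizing over $x$ preserves $F_{\lambda_1}(\bar x) \leq F_{\lambda_2}(\bar x)$. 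Consequently $\lambda_{\min}\,\gap{\phi}{F}(x^k, \lambda_{\min}) \leq \lambda_k\,\gap{\phi}{F}(x^k, \lambda_k) \leq F(x^k) - F(x^{k+1})$; summing over $k$, using $F(x^K) \geq \inf F$, and dividing by $K$ delivers the claimed $\mathcal{O}(1/K)$ rate at $\lambda_{\min}$.

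Finally, for subsequential convergence the same summation forces $\gap{\phi}{F}(x^k, \lambda_{\min}) \to 0$. For any limit point $x^\star = \lim_j x^{k_j}$, lower semicontinuity of $\gap{\phi}{F}(\cdot, \lambda_{\min})$ from \cref{thm:continuity_aprox_grad} gives $0 \leq \gap{\phi}{F}(x^\star, \lambda_{\min}) \leq \liminf_j \gap{\phi}{F}(x^{k_j}, \lambda_{\min}) = 0$, and the zero-gap characterization in the same lemma yields stationarity $0 \in \nabla f(x^\star) + \partial g(x^\star)$. I expect the main obstacle to be precisely the monotonicity step: because the shift $\lambda\nabla\phi^*(\nabla f(\bar x))$ inside $\phi$ itself depends on $\lambda$, one cannot invoke pointwise monotonicity of $\lambda \star \phi$ directly and must route the argument through \cref{thm:episcaling_asmoothness}, which is what singles out $\lambda_{\min}$ as the correct fixed evaluation point in the rate bound.
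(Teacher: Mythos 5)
Your proof is correct and takes essentially the same route as the paper: the paper likewise combines the linesearch exit test with optimality of $x^{k+1}$ in the backward step, passes from the $\lambda_k$-majorizer to the $\lambda_{\min}$-majorizer via \cref{thm:episcaling_asmoothness} (which is exactly your monotonicity of $\lambda \mapsto F_{\lambda}(\bar x)$, equivalently of $\lambda\,\gap{\phi}{F}(\bar x, \lambda)$), and then telescopes and invokes \cref{thm:continuity_aprox_grad} as in \cref{thm:asymptotic_convergence}. Your explicit derivation of $\lambda_{\min} \leq \lambda_k \leq \lambda_{\max} < \lambda_g$ from the last rejected trial step merely spells out what the paper asserts in one line.
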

\begin{proof}
Thanks to \cref{thm:episcaling_asmoothness} in each iteration $k \in \bN_0$ the linesearch terminates with some $\lambda_k$ with $\lambda_{\min} \leq \lambda_k$ such that
$$
f(x^{k+1}) \leq f(x^k) + \lambda_k \star \phi(x^{k+1} - x^k + \lambda_k \nabla \phi^*(\nabla f(x^k))) - \lambda_k\star \phi(\lambda_k \nabla \phi^*(\nabla f(x^k))).
$$
Adding $g(x^{k+1})$ to both sides of the inequality we have:
\begin{align*}
F(x^{k+1}) &= f(x^{k+1}) +g(x^{k+1}) \\
&\leq f(x^k) + \lambda_k \star \phi(x^{k+1} - x^k + \lambda_k \nabla \phi^*(\nabla f(x^k))) - \lambda_k \star \phi(\lambda_k \nabla \phi^*(\nabla f(x^k))) + g(x^{k+1}) \\
&=\inf_{x \in \bR^n} f(x^k) + \lambda_k \star \phi(x - x^k + \lambda_k\nabla \phi^*(\nabla f(x^k))) - \lambda_k \star \phi(\lambda_k \nabla \phi^*(\nabla f(x^k))) + g(x) \\
&\leq \inf_{x \in \bR^n} f(x^k) + \lambda_{\min} \star \phi(x - x^k + \lambda_{\min}\nabla \phi^*(\nabla f(x^k))) - \lambda_{\min} \star \phi(\lambda_{\min} \nabla \phi^*(\nabla f(x^k))) + g(x) \\
&=F_{\lambda_{\min}}(x^k) = F(x^k) - (F(x^k) -F_{\lambda_{\min}}(x^k)) \\
&= F(x^k) - \lambda_{\min} \gap{\phi}{F}(x^k, \lambda_{\min}),
\end{align*}
where the second equality holds by the $x$-update and the last inequality holds by \cref{thm:episcaling_asmoothness} and the inequality $\lambda_{\min}^{-1} \geq \lambda_k^{-1}$ and the last equalities by definition of $F_{\lambda_{\min}}$.

%
%
Summing the inequality from $k=0$ to $k=K-1$ and adapting the proof of \cref{thm:asymptotic_convergence} we obtain the claimed result.
\ifx\ifsvjour\true
\qed
\fi
\end{proof}

\subsection{$Q$-linear convergence under anisotropic proximal gradient dominance}
The following definition and subsequent theorem is a generalization of the proximal PL-inequality due to \cite{karimi2016linear} to the anisotropic case:
\begin{definition}[anisotropic proximal gradient dominance]
We say that $F:=f+g$ satisfies the anisotropic proximal gradient dominance condition relative to $\phi$ with constant $\mu>0$ for parameter $0<\lambda\leq \min\{\mu^{-1},\lambda_g\}$ if 
for all $\bar x \in \bR^n$
\begin{align} \label{eq:anisotropic_PL}
\mu(F(\bar x) - \inf F) \leq \gap{\phi}{F}(\bar x, \lambda).
\end{align}
\end{definition}
In light of \cref{thm:continuity_aprox_grad} for $g \equiv 0$ and $\phi=\frac{1}{2}\|\cdot\|^2$ the gap function $\gap{\phi}{F}(\bar x, \lambda) = \frac{1}{2}\|\nabla f(\bar x)\|^2$ and thus \cref{eq:anisotropic_PL} specializes to the classical PL-inequality $\mu(f(\bar x) - \inf f) \leq \frac{1}{2}\|\nabla f(\bar x)\|^2$.

Next we establish the $Q$-linear convergence\footnote{We say that a sequence $\{x^k\}_{k =0}^\infty$ converges $Q$-linearly to $L$ with rate $\mu$ if $\lim_{k\to \infty} \frac{\|x^{k+1}- L\|}{\|x^k - L\|} \leq \mu$. We say that $\{x^k\}_{k =0}^\infty$ converges $R$-linearly to $L$ with rate $\mu$ if $\|x^k -L\| \leq \varepsilon_k$ and $\varepsilon_k$ converges $Q$-linearly to $0$.} of our algorithm under anisotropic proximal gradient dominance. This follows immediately from \cref{thm:sufficient_descent}:
\begin{theorem}
\ifx\ifsvjour\true
 \label[theorem]{thm:linear_convergence}
\else
 \label{thm:linear_convergence}
\fi
Let $\{x^k\}_{k =0}^\infty$ be the sequence of backward-steps generated by \cref{alg:aproxgrad}.
Let $F:=f+g$ satisfy the anisotropic proximal gradient dominance condition relative to $\phi$ with constant $0<\mu<L$ for parameter $1/L < \lambda_g$. Let $\lambda = 1/L$. Then $\{F(x^k)\}_{k =0}^\infty$ converges $Q$-linearly to $\inf F$ with rate $(1 - \tfrac\mu L )$, in particular,
\begin{equation*}
F(x^{k+1}) - \inf F \leq (1 - \tfrac\mu L ) (F(x^{k}) - \inf F).
\end{equation*}
\end{theorem}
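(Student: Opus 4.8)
The plan is to derive the result directly by combining the sufficient decrease property of \cref{thm:sufficient_descent} with the anisotropic proximal gradient dominance condition, as the theorem statement itself advertises. The only genuine preliminary is a bookkeeping check on the parameters: the gradient dominance condition is only assumed valid for $0<\lambda\leq\min\{\mu^{-1},\lambda_g\}$, so I must first verify that the chosen step-size $\lambda=1/L$ lies in this admissible range. Since $0<\mu<L$ gives $1/L<1/\mu=\mu^{-1}$, and since $1/L<\lambda_g$ is assumed, we indeed have $\lambda=1/L\leq\min\{\mu^{-1},\lambda_g\}$, so inequality \cref{eq:anisotropic_PL} applies at every iterate with this $\lambda$.

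With this in place I would proceed as follows. First, invoke \cref{thm:sufficient_descent} to obtain, for every $k\in\bN_0$,
\[
F(x^{k+1})\leq F(x^k)-\lambda\,\gap{\phi}{F}(x^k,\lambda).
\]
Subtracting $\inf F$ from both sides and applying the gradient dominance bound $\gap{\phi}{F}(x^k,\lambda)\geq\mu\bigl(F(x^k)-\inf F\bigr)$ yields
\[
F(x^{k+1})-\inf F\leq\bigl(F(x^k)-\inf F\bigr)-\lambda\mu\bigl(F(x^k)-\inf F\bigr)=(1-\lambda\mu)\bigl(F(x^k)-\inf F\bigr).
\]
Substituting $\lambda=1/L$ gives the one-step contraction with factor $1-\mu/L$, which lies in $(0,1)$ precisely because $0<\mu<L$.

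Finally I would unroll this recursion over $k$ iterations, obtaining
\[
F(x^{k})-\inf F\leq(1-\tfrac{\mu}{L})^{k}\bigl(F(x^0)-\inf F\bigr),
\]
which is the claimed linear rate. I do not expect any real obstacle here: the argument is a textbook Lyapunov-style telescoping once the two lemmas are chained, and all the analytic content has already been absorbed into \cref{thm:sufficient_descent} (which packages the anisotropic descent inequality and the majorization-minimization update) and into the definition of the gap function. The one point deserving an explicit remark is the parameter feasibility $\lambda=1/L\leq\mu^{-1}$, since without $\mu<L$ the contraction factor would fail to be nonnegative and the gradient dominance condition might not even be assumed at this step-size.
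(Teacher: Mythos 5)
Your proposal is correct and follows essentially the same route as the paper's proof: chain \cref{thm:sufficient_descent} with the dominance inequality \cref{eq:anisotropic_PL} at $\lambda=1/L$ to get the one-step contraction factor $1-\mu/L$, then iterate. Your explicit check that $\lambda=1/L\leq\min\{\mu^{-1},\lambda_g\}$ follows from $0<\mu<L$ and $1/L<\lambda_g$ is a welcome bit of bookkeeping that the paper leaves implicit in its hypotheses.
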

\begin{proof}
Thanks to \cref{thm:sufficient_descent} we have for any $k \in \bN_0$ that
\begin{align*}
F(x^{k+1}) - \inf F \leq F(x^{k}) - \inf F - \tfrac{1}{L} \gap{\phi}{F}(x^{k}, L^{-1}).
\end{align*}
The anisotropic gradient dominance condition yields that
\begin{equation*}
\begin{aligned}
F(x^{k+1}) - \inf F &\leq F(x^{k}) - \inf F - \tfrac{\mu}{L}(F(x^{k}) - \inf F) \\
&=  (1 - \tfrac\mu L)(F(x^{k}) - \inf F)
\end{aligned}
\end{equation*}
which is the claimed rate.
\end{proof}
Note that for the Bregman proximal gradient method only a $R$-linear convergence result is known \cite{lu2018relatively}.
If $g\equiv 0$ our method specializes to dual space preconditioning for gradient descent \cite{maddison2021dual} for which the $Q$-linear convergence was established under different conditions \cite[Theorem 3.9]{maddison2021dual}. As shown in \cite[Proposition 3.3]{maddison2021dual} these conditions are equivalent to Bregman strong convexity and smoothness of $f^*$ relative to $\phi^*$. As we shall see next this is a special case of our setting if $g\equiv 0$.

Next we show that $F=f+g$ satisfies the anisotropic proximal gradient dominance condition if $f$ is anisotropically strongly convex. Anisotropic strong convexity is defined in terms of the following subgradient inequality which generalizes \cite[Definition 3.8(ii)]{laude2021conjugate} to reference functions $\phi$ which are possibly not super-coercive. Consequently, the gradient mapping $\nabla \phi^*$ does not have full domain in general. Therefore we restrict the subgradient inequality to hold only at pairs $(\bar x, \bar v) \in \gph \partial f$ for which $\bar v \in \intr \dom \phi^*$. In addition we assume that $\ran \partial f \supseteq \ran \nabla \phi$.
\begin{definition}[anisotropic strong convexity]
\ifx\ifsvjour\true
 \label[definition]{def:astrong_convexity}
\else
 \label{def:astrong_convexity}
\fi
Let $f \in \Gamma_0(\bR^n)$ such that $\ran \partial f \supseteq \ran \nabla \phi$. Then we say that $f$ is anisotropically strongly convex relative to $\phi$ with constant $\mu$ if for all $(\bar x, \bar v) \in \gph \partial h \cap (\bR^n \times \intr \dom \phi^*)$ the following inequality holds true
\begin{align}\label{eq:a-strongly}
f(x) &\geq f(\bar x) + \tfrac1 \mu \star \phi(x-\bar x + \mu^{-1} \nabla\phi^*(\bar v)) - \tfrac1 \mu \star \phi(\mu^{-1} \nabla\phi^*(\bar v)), \quad \forall x\in\bR^n.
\end{align}
\end{definition}
\begin{remark}
For $\phi=\tfrac{1}{2}\|\cdot\|^2$ by expanding the square since $\nabla \phi^* = \id$ and $\tfrac1 \mu \star \phi = \frac{\mu}{2}\|\cdot\|^2$ the subgradient inequality \cref{eq:a-strongly} specializes to the classical Euclidean strong convexity inequality, as is the case for the anisotropic descent inequality \cref{rem:euclidean_descent_lemma}:
\begin{align*}
f(x) &\geq f(\bar x) + \tfrac{\mu}{2}\|x-\bar x + \mu^{-1} \bar v\|^2 - \tfrac \mu 2 \|\mu^{-1} \bar v\|^2 = h(\bar x)+\langle \bar v, x - \bar x \rangle + \tfrac{\mu}{2}\|x-\bar x\|^2.
\end{align*}
\end{remark}
Next we show that anisotropic strong convexity of $f$ implies anisotropic proximal gradient dominance of $F=f+g$.
\begin{proposition}[anisotropic strong convexity implies anisotropic proximal gradient dominance]
\ifx\ifsvjour\true
 \label[proposition]{thm:strong_implies_pg_dominance}
\else
 \label{thm:strong_implies_pg_dominance}
\fi

Let $f$ be anisotropically strongly convex relative to $\phi$ with constant $\mu$ and $g \in \Gamma_0(\bR^n)$. Then $\ran \nabla f = \ran \nabla \phi$ and $F:=f+g$ is coercive and strictly convex relative to its effective domain implying that it has a unique minimizer $x^\star= \argmin F$. In particular, $F$ satisfies the anisotropic proximal gradient dominance condition relative to $\phi$ with constant $\mu$ for any parameter $0<\lambda\leq \mu^{-1}$, i.e., 
$$
\mu(F(\bar x) - F(x^\star) ) \leq \gap{\phi}{F}(\bar x, \lambda).
$$
\end{proposition}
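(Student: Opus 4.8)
The plan is to peel off the structural claims (regularity of $f$, existence and uniqueness of the minimizer) from the gradient-dominance estimate itself, which becomes a two-line argument once the former are in place. The whole proof hinges on the particular scaling $\tfrac1\lambda$ built into $\gap{\phi}{F}$.

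First I would reduce to the endpoint parameter. By the scaling property \cref{thm:scaling_property} one has $\gap{\phi}{F}(\bar x,\lambda)\geq\gap{\phi}{F}(\bar x,\mu^{-1})$ for every $0<\lambda\leq\mu^{-1}$, so it suffices to prove the dominance inequality at $\lambda=\mu^{-1}$ and let scaling propagate it to all smaller $\lambda$. Unwinding $\gap{\phi}{F}(\bar x,\mu^{-1})=\mu\bigl(F(\bar x)-F_{\mu^{-1}}(\bar x)\bigr)$, the target $\mu(F(\bar x)-\inf F)\leq\gap{\phi}{F}(\bar x,\mu^{-1})$ is equivalent to the single pointwise bound $F_{\mu^{-1}}(\bar x)\leq\inf F=F(x^\star)$. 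This I would obtain by evaluating the infimum defining $F_{\mu^{-1}}(\bar x)$ at the candidate $x=x^\star$, and then bounding the resulting expression from above by $f(x^\star)$ via the anisotropic strong convexity inequality \cref{eq:a-strongly} at the pair $(\bar x,\nabla f(\bar x))$ tested against $x^\star$; since $\bar v=\nabla f(\bar x)\in\intr\dom\phi^*$, the shift terms $\tfrac1\mu\star\phi\bigl(\mu^{-1}\nabla\phi^*(\nabla f(\bar x))\bigr)$ in the two expressions coincide termwise and cancel exactly. Adding $g(x^\star)$ yields $F_{\mu^{-1}}(\bar x)\leq f(x^\star)+g(x^\star)=F(x^\star)$, as required.

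For this substitution to be legitimate I must first guarantee $\nabla f(\bar x)\in\intr\dom\phi^*=\ran\nabla\phi$, so that $\nabla\phi^*(\nabla f(\bar x))$ is defined and \cref{eq:a-strongly} applies with $\bar v=\nabla f(\bar x)$; this is exactly $\ran\nabla f=\ran\nabla\phi$. For the inclusion $\ran\partial f\subseteq\dom\phi^*$ I would conjugate a single instance of the minorant: \cref{eq:a-strongly} shows $f$ majorizes a shifted copy of $\tfrac1\mu\star\phi$, hence $f^*\leq\mu^{-1}\phi^*+\text{affine}$ and $\dom f^*\subseteq\dom\phi^*$, i.e. $\ran\partial f=\dom\partial f^*\subseteq\dom\phi^*$; the reverse containment $\ran\partial f\supseteq\ran\nabla\phi$ is built into \cref{def:astrong_convexity}. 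To promote subgradients to gradients I would use that each minorant is a strictly convex $\mathcal{C}^1$ function (as $\phi$ is Legendre with full domain, $\tfrac1\mu\star\phi$ is smooth and strictly convex), which forces $f$ to be essentially smooth and strictly convex; equivalently, the dual characterization identifies $f^*$ as relatively smooth with respect to $\phi^*$ and hence differentiable on $\intr\dom\phi^*$. Reciprocity of the gradients of the resulting Legendre pair then gives that $\nabla f$ is a bijection onto $\intr\dom\phi^*$, so $\ran\nabla f=\ran\nabla\phi$. For existence and uniqueness of $x^\star$ I would prove coercivity of $F$ from the same minorant together with anisotropic prox-boundedness: fixing one admissible pair $(\bar x,\bar v)$, \cref{eq:a-strongly} gives $f(x)\geq\text{const}+\tfrac1\mu\star\phi(x-y_0)$ with $y_0=\bar x-\mu^{-1}\nabla\phi^*(\bar v)$, whence $F(x)\geq\text{const}+\xi(x,y_0;\mu^{-1})$ in the notation of \cref{rem:level_boundedness}; since $\mu^{-1}<\lambda_g$ this is level-bounded, so $F$ is coercive, proper and lsc and therefore attains its infimum, while strict convexity of $F$ on $\dom F$ forces the minimizer to be unique.

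The genuinely short, self-contained part is the reduction and the single substitution $x=x^\star$ of the second paragraph. The main obstacle is the regularity package: upgrading the one-sided hypotheses of \cref{def:astrong_convexity} — the subgradient inequality only at $\bar v\in\intr\dom\phi^*$ together with $\ran\partial f\supseteq\ran\nabla\phi$ — to the full identity $\ran\nabla f=\ran\nabla\phi$ and to coercivity of $F$. This is delicate precisely because $\phi$ need not be super-coercive (for example $\phi=\Exp$), so that $f$ itself is typically \emph{not} coercive and $0\notin\intr\dom f^*$; coercivity of $F$ genuinely relies on the anisotropic prox-boundedness threshold $\lambda_g$ rather than on $f$ alone, which is why the standing requirement $\mu^{-1}<\lambda_g$ cannot be dispensed with.
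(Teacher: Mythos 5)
Your central estimate is exactly the paper's: apply the strong-convexity inequality \cref{eq:a-strongly} at the pair $(\bar x,\nabla f(\bar x))$, add $g$, and compare with the infimum defining $F_{\mu^{-1}}(\bar x)$ (you evaluate at $x^\star$, the paper minimizes both sides over $x$ — the same computation) to obtain $F_{\mu^{-1}}(\bar x)\leq F(x^\star)$, hence $\mu(F(\bar x)-F(x^\star))\leq\gap{\phi}{F}(\bar x,\mu^{-1})$, and then \cref{thm:scaling_property} propagates this to all $0<\lambda\leq\mu^{-1}$. The genuine gap is in your justification that $\nabla f(\bar x)\in\intr\dom\phi^*$. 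Your conjugation step runs backwards: from the minorant $f\geq c+\tfrac{1}{\mu}\star\phi(\cdot-y_0)$ you correctly get $f^*\leq\mu^{-1}\phi^*+\langle\cdot,y_0\rangle-c$, but an \emph{upper} bound on $f^*$ yields $\dom f^*\supseteq\dom\phi^*$, not $\dom f^*\subseteq\dom\phi^*$; indeed $\dom h^*\supseteq\dom\phi^*$ is precisely item \cref{thm:conjugate_duality_astrong:bsmooth} of the duality result, the opposite containment to the one you assert. Nor can any repair stay within strong convexity: \cref{def:astrong_convexity} imposes the subgradient inequality only at pairs with $\bar v\in\intr\dom\phi^*$ and constrains nothing elsewhere, so anisotropic strong convexity alone does not force $\ran\nabla f\subseteq\ran\nabla\phi$ — e.g.\ relative to $\phi=\exp$ on $\bR$, the function $f(x)=e^x+\varepsilon x^2$ with $\varepsilon$ small satisfies \cref{thm:conjugate_duality_astrong:bsmooth} with some $\mu>0$, yet $\ran f'=\bR\not\subseteq(0,\infty)$. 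The paper closes this in one line using the standing assumption \cref{assum:a2}, which is in force throughout \Cref{sec:aproxgrad_analysis}: $f$ has the anisotropic descent property, whose constraint qualification \cref{eq:cq_adescent} is exactly $\ran\nabla f\subseteq\ran\nabla\phi$; combined with $\ran\partial f\supseteq\ran\nabla\phi$ from \cref{def:astrong_convexity} this gives the claimed equality. This also makes your ``promotion to essential smoothness'' superfluous, since $f\in\mathcal{C}^1(\bR^n)$ already by \cref{assum:a2}.

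On the structural claims your route also diverges from the paper's, in a defensible but misframed way. Since $g\in\Gamma_0(\bR^n)$, the relevant standing hypothesis is \cref{assum:a5'}, under which $\lambda_g=+\infty$; your proviso ``$\mu^{-1}<\lambda_g$ cannot be dispensed with'' is therefore automatic rather than an additional requirement, and with it your level-boundedness argument for coercivity of $F$ does go through. The paper instead avoids $\lambda_g$ entirely: the convex minorant $\xi_{\bar x}\leq F$ attains its minimum by \cref{thm:moreau_decomposition:decomp} (this is where the CQ of \cref{assum:a5'} enters), so $0\in\dom\partial(\xi_{\bar x})^*$; strict convexity of $\xi_{\bar x}$ relative to $\dom g$ — inherited from strict convexity of $\phi$, which is also what gives strict convexity of $f$ and uniqueness of $x^\star$ — makes $(\xi_{\bar x})^*$ essentially smooth, whence $0\in\intr\dom(\xi_{\bar x})^*$ and \cite[Theorem 11.8(c)]{RoWe98} yields coercivity of $\xi_{\bar x}$ and hence of $F$. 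Once the gradient-range step is repaired via \cref{assum:a2}, either coercivity route completes the proof.
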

A proof is provided in \cref{sec:thm_strong_implies_pg_dominance}

Next we provide a conjugate duality correspondence between anisotropic strong convexity and relative smoothness in the Bregman sense \cite{birnbaum2011distributed,bauschke2017descent,lu2018relatively} generalizing \cite[Theorem 4.3]{laude2021conjugate} to reference functions which are not necessarily super-coercive. This turns out helpful in the subsequent \cref{ex:a_strongly_convex_1,ex:a_strongly_convex_2}.
\begin{proposition}[characterizations of anisotropic strong convexity]
\ifx\ifsvjour\true
 \label[proposition]{thm:conjugate_duality_astrong}
\else
 \label{thm:conjugate_duality_astrong}
\fi
Let $f \in \Gamma_0(\bR^n)$ and $\mu > 0$.
Then the following are equivalent:
\begin{propenum}
\item \label{thm:conjugate_duality_astrong:astrong} $\ran \partial f \supseteq \ran \nabla \phi$ and $f$ satisfies the anisotropic strong convexity inequality for all $(\bar x, \bar v) \in \gph \partial h \cap (\bR^n \times \intr \dom \phi^*)$, i.e., the following inequality holds true
\begin{align}\label{eq:a_strongly_mu1}
f(x) &\geq f(\bar x) +\tfrac{1}{\mu} \star \phi(x-\bar x + \mu^{-1}\nabla\phi^*(\bar v)) - \tfrac{1}{\mu} \star \phi(\mu^{-1}\nabla\phi^*(\bar v)) \quad \forall x\in\bR^n;
\end{align}
\item \label{thm:conjugate_duality_astrong:bsmooth} $\dom f^* \supseteq \dom \phi^*$ and $f^*$ is smooth relative to $\phi^*$ in the Bregman sense with constant $\mu^{-1}$, i.e., $\mu^{-1}\phi^* \mathbin{\dot{-}} f^*$ is convex on $\intr \dom \phi^*$.
\end{propenum}
\end{proposition}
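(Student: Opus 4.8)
The plan is to derive both implications from a single Fenchel-conjugation identity: at a fixed pair $(\bar x,\bar v)\in\gph\partial h$ with $\bar v\in\intr\dom\phi^*$, the anisotropic strong convexity inequality \cref{eq:a_strongly_mu1} is, after conjugation, equivalent to the assertion that $q:=h^*\mathbin{\dot{-}}\mu^{-1}\phi^*$ admits an affine majorant that touches it at $\bar v$. Since $h\in\Gamma_0(\bR^n)$, ordinary conjugation suffices here (no generalized conjugacy is needed), which is what makes the convex case cleaner than \cref{thm:phi_convex_asmooth:asmooth}.

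First I would set $\bar y:=\bar x-\mu^{-1}\nabla\phi^*(\bar v)$ and write the right-hand side of \cref{eq:a_strongly_mu1} as a function of $x$, namely $m_{\bar x}:=\tfrac1\mu\star\phi(\cdot-\bar y)+d_{\bar x}$ with $d_{\bar x}:=h(\bar x)-\tfrac1\mu\star\phi(\bar x-\bar y)$; because $\phi$ has full domain, $m_{\bar x}\in\Gamma_0(\bR^n)$. Using $(\tfrac1\mu\star\phi)^*=\mu^{-1}\phi^*$ gives $m_{\bar x}^*(v)=\mu^{-1}\phi^*(v)+\langle\bar y,v\rangle-d_{\bar x}$, and a Fenchel--Young computation invoking $\bar v\in\partial h(\bar x)$ together with the identity $\phi(\nabla\phi^*(\bar v))+\phi^*(\bar v)=\langle\bar v,\nabla\phi^*(\bar v)\rangle$ shows $h^*(\bar v)=m_{\bar x}^*(\bar v)$. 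Since $m_{\bar x}\in\Gamma_0(\bR^n)$, the order-reversing involutive nature of conjugation yields the pivotal equivalence: \cref{eq:a_strongly_mu1} holds for all $x$ if and only if $v\mapsto\langle\bar y,v\rangle-d_{\bar x}$ majorizes $q$ everywhere and coincides with $q$ at $\bar v$.

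For (i)$\Rightarrow$(ii) the hypothesis $\ran\partial h\supseteq\ran\nabla\phi=\intr\dom\phi^*$ lets me, for every $\bar v\in\intr\dom\phi^*$, pick $\bar x$ with $\bar v\in\partial h(\bar x)$ and thereby produce a \emph{global} affine majorant of $q$ touching at $\bar v$; its finiteness on $\dom\phi^*$ forces $\dom h^*\supseteq\dom\phi^*$, and exhibiting $q$ as the pointwise infimum over $\bar v\in\intr\dom\phi^*$ of these affine touching majorants shows $q$ is concave on $\intr\dom\phi^*$, i.e. $\mu^{-1}\phi^*\mathbin{\dot{-}}h^*$ is convex there. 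For (ii)$\Rightarrow$(i), convexity of $p:=\mu^{-1}\phi^*\mathbin{\dot{-}}h^*$ on the open set $\intr\dom\phi^*$, where it is finite by $\dom h^*\supseteq\dom\phi^*$, makes it subdifferentiable at each $\bar v\in\intr\dom\phi^*$; writing $\mu^{-1}\phi^*=h^*+p$ and using differentiability of $\phi^*$ on $\intr\dom\phi^*$, the convex sum rule forces both $h^*$ and $p$ to be differentiable there, so the unique $\bar x$ with $\bar v\in\partial h(\bar x)$ equals $\nabla h^*(\bar v)$ and $\bar y=-\nabla p(\bar v)$. The supporting inequality for $p$ furnishes the touching affine majorant of $q$, and the equivalence above returns \cref{eq:a_strongly_mu1}; letting $\bar v$ range over $\intr\dom\phi^*$ simultaneously yields $\ran\partial h\supseteq\intr\dom\phi^*=\ran\nabla\phi$.

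I expect the main obstacle to be the boundary bookkeeping forced by $\phi^*$ not having full domain. In the converse direction concavity of $q$ only supplies affine majorants on $\intr\dom\phi^*$, whereas reversing the conjugation needs $h^*\le m_{\bar x}^*$ on all of $\bR^n$. I would bridge this with the standard closure property of closed proper convex functions: extend the inequality from $\intr\dom\phi^*=\intr\dom m_{\bar x}^*$ to $\dom\phi^*$ along segments emanating from $\bar v$, using lower semicontinuity of $h^*$, while the values outside $\dom\phi^*$ are $+\infty$ and hence harmless. With that extension in place, conjugating $h^*\le m_{\bar x}^*$ gives $h=h^{**}\ge m_{\bar x}^{**}=m_{\bar x}$, which is exactly \cref{eq:a_strongly_mu1}; the remaining steps are routine conjugate calculus and the Fenchel--Young identity for the Legendre pair $(\phi,\phi^*)$.
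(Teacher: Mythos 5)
Your proof is correct, but it takes a genuinely different route from the paper's. The paper proves \labelcref{thm:conjugate_duality_astrong:astrong} $\Rightarrow$ \labelcref{thm:conjugate_duality_astrong:bsmooth} through the generalized ($\Phi$-)conjugacy machinery of \Cref{sec:generalized_conjugacy} — interpreting the strong convexity inequality as $\bar y \in \partial_\Phi h(\bar x)$ for $\Phi(x,y)=\mu\star\phi(x-y)$, invoking \cref{thm:phi_subgradients} and a joint-supremum interchange \cite[Proposition 1.35]{RoWe98} to identify $\phi^*\mathbin{\dot{-}}h^*$ with $(h^\Phi)^*_-$ on $\intr\dom\phi^*$, and then establishing $\dom h^*\supseteq\dom\phi^*$ via a Bregman-distance liminf argument along line segments; the converse direction uses regularity results \cite[Exercise 8.20(b), Theorem 9.18]{RoWe98} to get smoothness of $h^*$ and a deconvolution formula \cite[Theorem 7.1]{cabot2017envelopes}. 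You bypass all of this: since $\phi$ has full domain, the anisotropic minorant $m_{\bar x}=\tfrac1\mu\star\phi(\cdot-\bar y)+d_{\bar x}$ lies in $\Gamma_0(\bR^n)$, so the primal inequality $h\geq m_{\bar x}$ biconjugates losslessly to $h^*\leq m_{\bar x}^*=\mu^{-1}\phi^*+\langle\bar y,\cdot\rangle-d_{\bar x}$, with touching at $\bar v$ verified by Fenchel--Young. This buys three simplifications: $\dom h^*\supseteq\dom\phi^*$ falls out of a single affine-plus-$\mu^{-1}\phi^*$ majorant rather than the paper's segment argument; differentiability of $h^*$ on $\intr\dom\phi^*$ follows from the elementary sum-rule/singleton-subdifferential observation applied to $\mu^{-1}\phi^*=h^*+p$, replacing \cite[Theorem 9.18]{RoWe98}; and neither $\Phi$-conjugacy nor the deconvolution theorem is needed. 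Your handling of the delicate boundary step in \labelcref{thm:conjugate_duality_astrong:bsmooth} $\Rightarrow$ \labelcref{thm:conjugate_duality_astrong:astrong} — extending $h^*\leq m^*_{\bar x}$ from $\intr\dom\phi^*$ to $\dom\phi^*$ via lower semicontinuity of $h^*$ and segment continuity of the closed convex function $m^*_{\bar x}$ \cite[Theorem 7.5]{Roc70} — is exactly right and mirrors a trick the paper itself uses elsewhere. What the paper's heavier approach buys in exchange is uniformity: the $\Phi$-conjugacy framework is what carries over to the nonconvex characterization in \cref{thm:phi_convex_asmooth} and to the DC-duality development, whereas your argument is intrinsically tied to $h\in\Gamma_0(\bR^n)$ — precisely the trade-off you correctly anticipated.
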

A proof is provided in \Cref{sec:thm:conjugate_duality_astrong}. It follows along the lines of the proof of \cite[Theorem 4.3]{laude2021conjugate} incorporating the constraint qualifications $\ran \partial f \supseteq \ran \nabla \phi$ and $\dom f^* \supseteq \dom \phi^*$, the latter being the standard CQ in Bregman smoothness. A key ingredient for the proof of the result above is the notion of \emph{generalized convexity} introduced in \Cref{sec:generalized_conjugacy}.

The next examples show that the absolute value function and the quadratic function are both anisotropically strongly convex relative to a softmax approximation of the absolute value function:
\begin{example}
\ifx\ifsvjour\true
 \label[example]{ex:a_strongly_convex_1}
\else
 \label{ex:a_strongly_convex_1}
\fi
Let $g(x):=\nu |x|$ with $\nu \geq 1$. Then $g$ is anisotropically strongly convex relative to the symmetrized logistic loss $\phi(x):=2\ln(1+ \exp(x)) - x$ with any constant $\mu>0$.
\end{example}
\begin{proof}
The individual conjugates amount to $g^*(x)=\delta_{[-\nu,\nu]}(x)$ and $\phi^*(x)=(x + 1)\ln((x + 1)/2) + (1-x)\ln((1-x)/2)$ with $x\ln x + (1-x)\ln(1-x)=0$ for $x\in\{0,1\}$ and $\dom\phi^*=[-1, 1]$.
Since $\dom g^* = [-\nu,\nu] \supseteq [-1,1]$ thanks to \cref{thm:conjugate_duality_astrong} anisotropic strong convexity of $g$ is implied by convexity of $\tfrac{1}{\mu}\phi^* \mathbin{\dot{-}} g^*$ on $(-1, 1)$. Since $g^*\equiv 0$ on $(-1, 1)$ this is valid for any $\mu>0$.
\ifx\ifsvjour\true
\qed
\fi
\end{proof}
\begin{remark}[Disclaimer]
It should be noted that for $\nu \geq 1$ \cref{ex:a_strongly_convex_1} is rather of theoretical interest: For the smooth part $f$ we have the restriction $\ran f' \subseteq \intr\dom\phi^*=(-1,1)$ and thus the first-order optimality condition $-f'(x^\star)\in \partial g(x^\star)$ is only valid at $x^\star = 0$.
\end{remark}

\begin{example}
\ifx\ifsvjour\true
 \label[example]{ex:a_strongly_convex_2}
\else
 \label{ex:a_strongly_convex_2}
\fi
Let $g(x):=\tfrac{\nu}{2}x^2$ with $\nu >0$. Then we show that $g$ is anisotropically strongly convex relative to $\phi(x):=2\ln(1+ \exp(x)) - x$ with constant $\mu:=2\nu$.
\end{example}
\begin{proof}
The individual conjugates amount to $g^*(x)=\tfrac{1}{2\nu}x^2$ and $\phi^*(x)=(x + 1)\ln((x + 1)/2) + (1-x)\ln((1-x)/2)$ with $x\ln x + (1-x)\ln(1-x)=0$ for $x\in\{0,1\}$ and $\dom\phi^*=[-1, 1]$.
Since $\dom g^* = \bR^n$ thanks to \cref{thm:conjugate_duality_astrong} anisotropic strong convexity of $g$ is implied by convexity of $\tfrac{1}{\mu}\phi^* \mathbin{\dot{-}} g^*$ on $(-1, 1)$. This is implied by $(\phi^*)''(x)=2/(1-x^2)\geq \tfrac{\mu}{\nu}$ for all $x \in (-1,1)$ which is valid for $\mu=2\nu$.
\ifx\ifsvjour\true
\qed
\fi
\end{proof}
\ifx\ifsvjour\true
Leveraging an equivalent reformulation of our algorithm in terms of a difference of $\Phi$-convex approach examined in
an extended version of this manuscript \cite{laude2022anisotropic} we show the algorithm's invariance under a certain double-min duality. This allows us to derive linear convergence if $g$ in place of $f$ is anisotropically strongly convex.
\else
\section{Difference of \texorpdfstring{$\Phi$}{Phi}-convex approach} \label{sec:phi_dca_and_phi_cvxt}
\subsection{Generalized convexity and generalized conjugacy} \label{sec:generalized_conjugacy}
In this section we shall discuss an equivalent reformulation of our algorithm in terms of a difference of $\Phi$-convex approach and show the algorithm's invariance under a certain double-min duality. This allows us to derive linear convergence if $g$ in place of $f$ is anisotropically strongly convex.

\ifx\ifsvjour\true
  We
\else
  First we
\fi
provide a self-contained overview of generalized conjugacy and generalized convexity \cite{moreau1970inf}; also see \cite{balder1977extension,dolecki1978convexity}.
Classically, these notions appear in the context of eliminating duality gaps in nonconvex and nonsmooth optimization or optimal transport theory; see, e.g., \cite{rockafellar1974augmented,penot1990strongly,Vil08,bauermeister2021lifting}. In our case these notions are used heavily in some proofs.
\ifx\ifsvjour\true\else
In particular, they give rise to a difference of $\Phi$-convex approach interpretation of our algorithm discussed in the next subsection.
\fi
For the remainder of this section let $X$ and $Y$ be nonempty sets and $\Phi: X \times Y \to \bR$ a real-valued coupling. Let $f: X \to \exR$.
\begin{definition}
We say that $f$ is $\Phi$-convex if there exists $\xi : Y \to \exR$ such that $f(x) = \sup_{y \in Y} \Phi(x, y) - \xi(y)$. In addition, we define the $\Phi$-conjugate of $f$ at $y \in Y$ and $\Phi$-bionjugate of $f$ at $x \in X$ as
$$
f^\Phi(y) = \sup_{x \in X} \Phi(x,y) - f(x) \quad \text{and} \quad f^{\Phi\Phi}(x) = \sup_{y \in Y} \Phi(x,y) - f^\Phi(y).
$$
We say that $y \in Y$ is a $\Phi$-subgradient of $f$ at $\bar x \in X$ if
\begin{align}
f(x) \geq f(\bar x) + \Phi(x, y) - \Phi(\bar x, y) \quad \forall x \in X,
\end{align}
or in other words $\bar x \in \argmax_{x \in X}\{ \Phi(x, y) -f(x)\}$. We call the set $\partial_\Phi f(\bar x)$ of all such $y \in Y$ the $\Phi$-subdifferential of $f$ at $\bar x$.
\end{definition}

We have the following result:
\begin{proposition}
\ifx\ifsvjour\true
 \label[proposition]{thm:phi_envelope_equivalence}
\else
 \label{thm:phi_envelope_equivalence}
\fi
$f$ is $\Phi$-convex on $X$ if and only if $f(x) = f^{\Phi\Phi}(x)$ for all $x \in X$.
\end{proposition}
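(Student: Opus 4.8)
The statement is a Fenchel--Moreau type biconjugation theorem for the generalized $\Phi$-conjugacy framework of Moreau. The plan is to prove the two implications separately and to exploit the fact that $h^{\Phi\Phi}$ is \emph{by construction} a $\Phi$-convex function.

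\emph{First implication ($h = h^{\Phi\Phi}$ implies $\Phi$-convex).} This direction is essentially definitional. If $h(x) = h^{\Phi\Phi}(x)$ for all $x \in X$, then writing out the definition of the biconjugate gives
$$
h(x) = h^{\Phi\Phi}(x) = \sup_{y \in Y} \big(\Phi(x,y) - h^\Phi(y)\big),
$$
and taking $\xi := h^\Phi : Y \to \exR$ exhibits $h$ as a pointwise supremum of the form $\sup_{y \in Y} \Phi(x,y) - \xi(y)$. Hence $h$ is $\Phi$-convex by definition. No real work is needed here beyond unwinding notation.

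\emph{Second implication ($\Phi$-convex implies $h = h^{\Phi\Phi}$).} Here I would proceed in two steps. First I would establish the universal weak inequality $h^{\Phi\Phi} \leq h$, which holds for \emph{any} $h:X\to\exR$ without convexity assumptions. This follows from the generalized Fenchel--Young inequality: by definition of $h^\Phi(y) = \sup_{x'} \Phi(x',y) - h(x')$ we have $h^\Phi(y) \geq \Phi(x,y) - h(x)$ for every $x,y$, equivalently $\Phi(x,y) - h^\Phi(y) \leq h(x)$; taking the supremum over $y \in Y$ gives $h^{\Phi\Phi}(x) \leq h(x)$ for all $x$. For the reverse inequality I would use the $\Phi$-convexity hypothesis: write $h(x) = \sup_{y} \Phi(x,y) - \xi(y)$ for some $\xi:Y\to\exR$. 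This representation forces $h^\Phi \leq \xi$ pointwise, since for each fixed $y$ and all $x$ one has $h(x) \geq \Phi(x,y) - \xi(y)$, i.e.\ $\Phi(x,y) - h(x) \leq \xi(y)$, and taking the supremum over $x$ yields $h^\Phi(y) \leq \xi(y)$. Monotonicity of the conjugation operation (larger function subtracted gives smaller supremum) then gives
$$
h^{\Phi\Phi}(x) = \sup_{y} \Phi(x,y) - h^\Phi(y) \geq \sup_{y} \Phi(x,y) - \xi(y) = h(x).
$$
Combining $h^{\Phi\Phi} \leq h$ and $h^{\Phi\Phi} \geq h$ yields equality.

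\emph{Main obstacle.} The argument is almost entirely formal and mirrors the classical convex-analytic proof with $\langle\cdot,\cdot\rangle$ replaced by the abstract coupling $\Phi$; the only point requiring care is the handling of the extended-real arithmetic. Since $\Phi$ is assumed real-valued, the expressions $\Phi(x,y) - h(x)$ and $\Phi(x,y) - h^\Phi(y)$ can take the values $\pm\infty$ when $h$ or $h^\Phi$ does, and I would verify that the Fenchel--Young inequality and the monotonicity step remain valid under the conventions for $\exR$ adopted in the preliminaries (upper/lower addition). This is routine but is where sign and $\infty$ bookkeeping must be checked, so it is the step I expect to demand the most attention; conceptually, though, there is no genuine difficulty.
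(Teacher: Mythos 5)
Your proof is correct and is the classical Fenchel--Moreau biconjugation argument, which is exactly the standard route: the paper states this proposition without proof (citing the generalized-conjugacy literature), and that literature's proof is the one you give --- the trivial direction via $\xi := h^\Phi$, the generalized Fenchel--Young inequality $h^{\Phi\Phi} \leq h$, and antitonicity of conjugation applied to $h^\Phi \leq \xi$ for the reverse inequality. The extended-real bookkeeping you flag as the main obstacle is indeed harmless here, since $\Phi$ is real-valued and so no expression of the form $\infty - \infty$ ever arises in $\Phi(x,y) - h(x)$ or $\Phi(x,y) - h^\Phi(y)$.
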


The following equivalent statements are standard in literature, see, e.g., \cite{balder1977extension,dolecki1978convexity} and references therein:
\begin{proposition}
\ifx\ifsvjour\true
 \label[proposition]{thm:phi_subgradients}
\else
 \label{thm:phi_subgradients}
\fi
For any $\bar x \in X$ and $\bar y \in Y$ the following statements are equivalent:
\begin{propenum}
\item $\bar y \in \partial_\Phi f(\bar x)$;
\item $f(\bar x) + f^\Phi(\bar y) = \Phi(\bar x, \bar y)$;
\item $\bar x \in \argmin_{x \in X} \;\{f(x) - \Phi(x, \bar y)\}$;
\end{propenum}
where any of the above equivalent statements implies that $f(\bar x) = f^{\Phi\Phi}(\bar x)$ and $\bar x \in \partial_\Phi f^\Phi(\bar y)$. If, in addition, $f$ is $\Phi$-convex, any of the above statements is equivalent to $\bar x \in \partial_\Phi f^\Phi(\bar y)$.
\end{proposition}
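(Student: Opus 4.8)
The plan is to prove the three-way equivalence by establishing a short cycle of implications, and then to verify the two trailing assertions (the biconjugate identity and the dual subdifferential membership) separately. These statements are the standard Fenchel--Young characterization transported to the $\Phi$-coupling setting, so the whole argument should be essentially a matter of unwinding definitions. First I would prove (i)$\Rightarrow$(ii). By definition, $\bar y \in \partial_\Phi h(\bar x)$ means $h(x) \geq h(\bar x) + \Phi(x,\bar y) - \Phi(\bar x, \bar y)$ for all $x$, which rearranges to $\Phi(x, \bar y) - h(x) \leq \Phi(\bar x, \bar y) - h(\bar x)$ for all $x$; taking the supremum over $x$ on the left gives $h^\Phi(\bar y) \leq \Phi(\bar x, \bar y) - h(\bar x)$, i.e.\ $h(\bar x) + h^\Phi(\bar y) \leq \Phi(\bar x, \bar y)$. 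The reverse inequality $h(\bar x) + h^\Phi(\bar y) \geq \Phi(\bar x, \bar y)$ is simply the definition of $h^\Phi$ as a supremum (the $\Phi$-Young inequality), which holds unconditionally, so equality follows.

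Next I would close the cycle with (ii)$\Rightarrow$(iii)$\Rightarrow$(i). For (ii)$\Rightarrow$(iii): from $h(\bar x) - \Phi(\bar x, \bar y) = -h^\Phi(\bar y) = \inf_{x}\{h(x) - \Phi(x, \bar y)\}$, where the last equality is again just the definition of $h^\Phi$ as a supremum rewritten with a sign flip, we read off directly that $\bar x$ attains this infimum, which is (iii). For (iii)$\Rightarrow$(i): if $\bar x \in \argmin_x\{h(x) - \Phi(x,\bar y)\}$, then $h(x) - \Phi(x,\bar y) \geq h(\bar x) - \Phi(\bar x, \bar y)$ for all $x$, which is precisely the $\Phi$-subgradient inequality, i.e.\ $\bar y \in \partial_\Phi h(\bar x)$. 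This establishes the equivalence of (i), (ii), (iii).

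Then I would derive the two consequences. For the biconjugate identity $h(\bar x) = h^{\Phi\Phi}(\bar x)$: the $\Phi$-Young inequality applied to $h^\Phi$ gives $h^{\Phi\Phi}(\bar x) \leq h(\bar x)$ in general (since $h^{\Phi\Phi}(\bar x) = \sup_y \Phi(\bar x, y) - h^\Phi(y)$ and each term is $\leq h(\bar x)$ by definition of $h^\Phi$); the matching lower bound comes from plugging $\bar y$ into the supremum and using (ii), which yields $h^{\Phi\Phi}(\bar x) \geq \Phi(\bar x, \bar y) - h^\Phi(\bar y) = h(\bar x)$. For $\bar x \in \partial_\Phi h^\Phi(\bar y)$: the definition of the $\Phi$-subdifferential of $h^\Phi$ at $\bar y$ reads $h^\Phi(y) \geq h^\Phi(\bar y) + \Phi(\bar x, y) - \Phi(\bar x, \bar y)$ for all $y$; rearranging gives $\Phi(\bar x, y) - h^\Phi(y) \leq \Phi(\bar x, \bar y) - h^\Phi(\bar y)$, whose supremum over $y$ is $h^{\Phi\Phi}(\bar x) = h(\bar x)$ (just established) equalling the right-hand side by (ii), so the inequality holds for all $y$.

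Finally, for the added equivalence under $\Phi$-convexity, I would observe that the implication (i)--(iii)$\Rightarrow \bar x \in \partial_\Phi h^\Phi(\bar y)$ was shown unconditionally above, so only the converse needs $\Phi$-convexity. Assuming $\bar x \in \partial_\Phi h^\Phi(\bar y)$, the symmetry of the Fenchel--Young characterization applied to $h^\Phi$ gives $h^\Phi(\bar y) + h^{\Phi\Phi}(\bar x) = \Phi(\bar x, \bar y)$; since $h$ is $\Phi$-convex, \Cref{thm:phi_envelope_equivalence} yields $h^{\Phi\Phi}(\bar x) = h(\bar x)$, which turns this into statement (ii). I do not expect any genuine obstacle here, as every step is a definitional rearrangement; the only point requiring mild care is bookkeeping the direction of inequalities under the sign flips between the supremum defining $h^\Phi$ and the infimum in (iii), and ensuring the $\Phi$-convexity hypothesis is invoked only where it is actually needed (namely the converse of the last equivalence).
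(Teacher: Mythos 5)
Your proof is correct: the cycle (i)$\Rightarrow$(ii)$\Rightarrow$(iii)$\Rightarrow$(i) via the $\Phi$-Young inequality, the derivation of $h(\bar x)=h^{\Phi\Phi}(\bar x)$ and $\bar x \in \partial_\Phi h^\Phi(\bar y)$ as unconditional consequences, and the use of \cref{thm:phi_envelope_equivalence} only for the converse under $\Phi$-convexity all check out. The paper itself gives no proof of this proposition — it defers to the literature as standard — and your argument is exactly the definitional one those references contain, so there is nothing to reconcile.
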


As illustrated in the following remark $\Phi$-convexity of $-f$ is equivalent to the existence of an infimal convolution expression of $f$:
\begin{remark}[$\Phi$-convexity and infimal convolutions]
\ifx\ifsvjour\true
 \label[remark]{thm:inf_conv_rem}
\else
 \label{thm:inf_conv_rem}
\fi
Choose $X=Y=\bR^n$ and the coupling 
\begin{align}
\Phi(x,y):=-\lambda \star \phi(x-y),
\end{align}
for $\lambda :=L^{-1}$.
Let $f:\bR^n \to \bR$. By definition $\Phi$-convexity of $-f$ means that there is $\xi:\bR^n \to \exR$ such that
$$
f(x)=-\sup_{y \in \bR^n} -\lambda \star \phi(x-y) - \xi(y) = \inf_{y \in \bR^n} \lambda \star \phi(x-y) + \xi(y) = (\xi \infconv \lambda \star \phi)(x).
$$
\end{remark}

\subsection{Difference of \texorpdfstring{$\Phi$}{Phi}-convex approach and transfer of smoothness} \label{sec:dca}
In the Euclidean case forward-backward splitting is invariant under transfer of strong convexity: More precisely, forward-backward splitting applied to $F=f+g$ produces the same sequence of iterates $\{x^k\}_{k =0}^\infty$ as forward-backward splitting applied to $F=(f+\frac{\mu}{2}\|\cdot\|^2) + (g - \frac{\mu}{2}\|\cdot\|^2)$. If $g$ in place of $f$ is strongly convex this transformation yields an equivalent splitting in which the smooth part $f+\frac{\mu}{2}\|\cdot\|^2$ is $(L+\mu)$-Lipschitz smooth and $\mu$-strongly convex and $g - \frac{\mu}{2}\|\cdot\|^2$ is convex. Thus \cref{thm:linear_convergence,thm:strong_implies_pg_dominance} can be invoked to prove linear convergence.
In the anisotropic case, however, this is no longer true as the following counterexample reveals:
\begin{example} \label{ex:counter_example_transfer_str_cvx}
Let $f(x)=ax$ with $a \in \bR_{++}$ and $g=\phi=\exp$. Then $f$ is anisotropically smooth relative to $\phi$ and $g$ is anisotropically strongly convex relative to $\phi$. However, $f+\phi$ is not anisotropically strongly convex relative to $\phi$ since $(a, +\infty) = \ran (f + \phi)' \subset \ran \phi' = (0, \infty)$.
\end{example}

Although a primal transfer of strong convexity fails in general, a similar property can be derived for a certain ``dual'' problem. To this end we provide an equivalent interpretation of the algorithm in terms of a \emph{difference of $\Phi$-convex approach} and show that the algorithm is invariant under a certain double-min duality (DC-duality). This allows us to transfer smoothness from $f$ to $g$ by epi-graphical addition and subtraction of $\phi$. Then it can be shown that the DC-dual problem has the anisotropic proximal gradient dominance condition and the algorithm attains linear convergence.

In view of \cref{thm:inf_conv_rem,thm:phi_convex_asmooth,thm:episcaling_asmoothness} anisotropic smoothness of $f$ relative to $\phi$ with constant $L$ implies that $-f$ is $\Phi$-convex relative to $\Phi(x,y)=-\lambda \star \phi(x-y)$ for any $\lambda \leq 1/L$ and via \cref{thm:phi_subgradients} that $(-f)(x)=(-f)^{\Phi\Phi}(x)=\sup_{y \in \bR^n} -\lambda \star\phi(x-y) -(-f)^{\Phi}(y)$. By invoking the notion of the $\Phi$-subdifferential this allows us to provide an alternative interpretation of our algorithm in terms of a difference of $\Phi$-convex approach applied to $F=g - (-f)$:
We first show the following lemma which illustrates that the anisotropic descent inequality can be understood in terms of a $\Phi$-subgradient inequality due to the following correspondence between $\Phi$-subgradients and classical gradients.
\begin{lemma}[correspondence between $\Phi$-subgradients and gradients] \label{thm:phi_subgradient_gradient_correspondence}
Let $f \in \mathcal{C}^1(\bR^n)$ and let $L>0$. Define $\Phi(x,y):=-\lambda\star \phi(x-y)$ for $\lambda:=L^{-1}$. Assume that $\ran \nabla f \subseteq \ran \nabla \phi$.
Then the following statements are equivalent:
\begin{propenum}
\item \label{thm:phi_subgradient_gradient_correspondence:asmooth} $f$ satisfies the anisotropic descent inequality relative to $\phi$ with constant $L$;
\item \label{thm:phi_subgradient_gradient_correspondence:phi_subgrad} $\partial_\Phi (-f)= \id - \lambda\nabla \phi^* \circ \nabla f$.
\end{propenum}
In particular, any of the above equivalent statements implies that $\partial_\Phi (-f)$ is single-valued.
\end{lemma}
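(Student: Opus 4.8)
The plan is to prove the equivalence \labelcref{thm:phi_subgradient_gradient_correspondence:asmooth} $\Leftrightarrow$ \labelcref{thm:phi_subgradient_gradient_correspondence:phi_subgrad} by unwinding the definition of the $\Phi$-subdifferential with the specific coupling $\Phi(x,y) = -\lambda \star \phi(x-y)$, and showing that membership $\bar y \in \partial_\Phi(-h)(\bar x)$ characterizes exactly the point $\bar y = \bar x - \lambda \nabla \phi^*(\nabla h(\bar x))$. First I would write out what $\bar y \in \partial_\Phi(-h)(\bar x)$ means: by definition this is the inequality
\begin{align*}
(-h)(x) \geq (-h)(\bar x) + \Phi(x, \bar y) - \Phi(\bar x, \bar y) \quad \forall x \in \bR^n,
\end{align*}
which, after multiplying by $-1$ and substituting the coupling, reads
\begin{align*}
h(x) \leq h(\bar x) + \lambda \star \phi(x - \bar y) - \lambda \star \phi(\bar x - \bar y) \quad \forall x \in \bR^n.
\end{align*}
This is precisely a majorization of $h$ at $\bar x$ by a shifted copy of $\lambda \star \phi$, anchored at $\bar y$.

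The core observation, which I would establish next, is that such a majorizer must be tangent to $h$ at $\bar x$: evaluating at $x = \bar x$ gives equality, so $\bar x$ minimizes the difference $x \mapsto h(\bar x) + \lambda \star \phi(x - \bar y) - \lambda \star \phi(\bar x - \bar y) - h(x)$, and Fermat's rule together with differentiability of both $h$ and $\lambda \star \phi$ forces the first-order condition $\nabla h(\bar x) = \nabla (\lambda \star \phi)(\bar x - \bar y) = \nabla \phi(L(\bar x - \bar y))$. Using the constraint qualification $\ran \nabla h \subseteq \ran \nabla \phi$ and the diffeomorphism property of $\nabla\phi$ with inverse $\nabla\phi^*$ (the identity $\nabla(\lambda \star \phi)^* = \lambda \nabla \phi^*$ from the remark after \cref{def:adescent}), this inverts uniquely to $\bar x - \bar y = \lambda \nabla \phi^*(\nabla h(\bar x))$, i.e.\ $\bar y = \bar x - \lambda \nabla \phi^*(\nabla h(\bar x))$. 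This shows that \emph{if} $\partial_\Phi(-h)(\bar x)$ is nonempty, its only possible element is this canonical point, giving single-valuedness as a byproduct.

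For the equivalence itself, the forward direction \labelcref{thm:phi_subgradient_gradient_correspondence:asmooth} $\Rightarrow$ \labelcref{thm:phi_subgradient_gradient_correspondence:phi_subgrad} follows by reading the anisotropic descent inequality \cref{eq:adescent} (with $\bar y = \bar x - \lambda \nabla\phi^*(\nabla h(\bar x))$ substituted, as in \cref{rem:shift_invariance}) directly as the $\Phi$-subgradient inequality above, so $\bar x - \lambda \nabla\phi^*(\nabla h(\bar x)) \in \partial_\Phi(-h)(\bar x)$; combined with the uniqueness argument from the previous paragraph this yields $\partial_\Phi(-h) = \id - \lambda \nabla\phi^* \circ \nabla h$. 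The reverse direction \labelcref{thm:phi_subgradient_gradient_correspondence:phi_subgrad} $\Rightarrow$ \labelcref{thm:phi_subgradient_gradient_correspondence:asmooth} is simply the statement that $\bar x - \lambda\nabla\phi^*(\nabla h(\bar x))$ belongs to $\partial_\Phi(-h)(\bar x)$, which by definition is the $\Phi$-subgradient inequality, and this rearranges back into the anisotropic descent inequality \cref{eq:adescent}. The main obstacle I anticipate is the careful handling of the epi-scaling $\lambda \star \phi$ under gradient computations and ensuring that the constraint qualification is exactly what is needed to invert $\nabla\phi$ on the relevant range; once the identity $\nabla(\lambda \star \phi) = \nabla\phi \circ L(\cdot)$ and its inverse are pinned down, the rest is a direct translation between the two inequalities.
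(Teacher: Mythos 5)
Your proposal is correct and takes essentially the same route as the paper's proof: you read the descent inequality directly as the statement that the canonical point $\bar x - \lambda\nabla\phi^*(\nabla h(\bar x))$ lies in $\partial_\Phi(-h)(\bar x)$, and you establish uniqueness by noting equality at $x=\bar x$, applying Fermat's rule to get $\nabla h(\bar x)=\nabla\phi(L(\bar x-\bar y))$, and inverting via $\nabla\phi^*$. The paper phrases the tangency step as $\bar x \in \argmin_{x}\{\lambda\star\phi(x-\bar y)-h(x)\}$ via \cref{thm:phi_subgradients} before invoking Fermat's rule, which is exactly your observation in different clothing, so there is nothing further to flag.
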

A proof can be found in \cref{sec:thm_phi_subgradient_gradient_correspondence}.

Unless stated otherwise for the remainder of this section we assume validity of \cref{assum:a1,assum:a2,assum:a3,assum:a4,assum:a5}. Furthermore recall the definition of the threshold of anisotropic prox-boundedness $\lambda_g$ and assume that $\lambda < \lambda_g$.
Thanks to \cref{thm:phi_subgradient_gradient_correspondence,thm:phi_subgradients} the updates in \cref{alg:aproxgrad} can be rewritten:
\begin{align} 
y^k &= x^k - \lambda\nabla\phi^*(\nabla f(x^k)) \in \partial_\Phi(-f)(x^k) \label{eq:forward_phi} \\
x^{k+1} &\subseteq \argmin_{x \in \bR^n} \;\{\lambda\star \phi(x - y^k) + g(x)\}= \partial_\Phi g^\Phi(y^k) \label{eq:backward_phi}.
\end{align}
Up to replacing the classical subdifferential with the $\Phi$-subdifferential this algorithm exactly resembles the structure of the classical \emph{difference of convex approach} (DCA).

Next we shall discuss a double-min duality also called DC-duality in the classical DCA; see, e.g., \cite{tao1997convex}. In particular we show that the algorithm is invariant under this duality.
Based on the identity $-f = (-f)^{\Phi\Phi}$ we can rewrite \cref{eq:opt_prob} as:
\begin{align}
\inf_{x \in \bR^n} ~g(x) + f(x) &= \inf_{x \in \bR^n} ~g(x) -(-f)^{\Phi\Phi}(x)\notag \\
&= \inf_{x \in \bR^n}  ~g(x) - \sup_{y \in \bR^n} -\lambda \star\phi(x-y) -(-f)^{\Phi}(y)\notag \\
&= \inf_{x \in \bR^n} \inf_{y \in \bR^n} ~g(x) + \lambda \star\phi(x-y) +(-f)^{\Phi}(y)\notag \\
&= \inf_{y \in \bR^n}\inf_{x \in \bR^n}  ~g(x) + \lambda \star\phi(x-y) +(-f)^{\Phi}(y)\label{eq:double_min} \\
&= \inf_{y \in \bR^n} (-f)^{\Phi}(y) -\sup_{x \in \bR^n} -\lambda \star \phi(x-y)-g(x)\notag \\
&= \inf_{y \in \bR^n} (-f)^{\Phi}(y) -g^\Phi(y) \label{eq:dc_dual_plain},
\end{align}
where reversing the order of minimization is also called double-min duality; see \cite[Theorem 11.67]{RoWe98}. In the spirit of classical DC-duality we refer to \cref{eq:dc_dual_plain} as the DC-dual problem of \cref{eq:opt_prob}.

As shown above anisotropic smoothness means that $-f$ is $\Phi$-convex and equivalently $f=-(-f)^{\Phi\Phi}=(-f)^{\Phi}\infconv \lambda \star \phi$ is an infimal convolution. For the corresponding conjugate transform we have
\begin{align}
(-f)^{\Phi}
=-(-f)\infconv\lambda \star \phi_-= \sup_{u \in \bR^n} f(\cdot + u) - \lambda \star \phi(u)=:f \infdeconv \lambda \star \phi,
\end{align}
which is also called the \emph{infimal deconvolution} of $f$ and $\phi$ due to \cite{hiriart1986formulations}.

If, in addition, $f \in \Gamma_0(\bR^n)$ thanks to Hiriart-Urruty \cite[Theorem 2.2]{hiriart1986general} the infimal deconvolution admits the form
\begin{align}
(-f)^\Phi = f \infdeconv \phi = (f^* \mathbin{\dot{-}} \phi^*)^*,
\end{align}
which is thus convex, proper and lsc.

Further rewriting $-g^\Phi$ respectively $(-f)^{\Phi}$ in terms of the infimal convolution respectively infimal deconvolution the DC-dual problem of \cref{eq:opt_prob} is rewritten as
\begin{align}
\text{minimize}~ G:= 
(g \infconv \lambda \star \phi_-) + (f \infdeconv \lambda \star \phi) \tag{D} \label{eq:dc_dual}.
\end{align}
The epi-graphical addition and subtraction of the reference function leads to a transfer of smoothness in the DC-dual and corresponds to pointwise addition and subtraction of the conjugate reference function in the Fenchel--Rockafellar dual. In the Euclidean case this is also related to primal and dual regularization in classical DC-programming \cite[Section 5.2]{tao1997convex}.

Next we show that up to interchanging the roles of forward- and backward-step our algorithm is invariant under double-min duality as is known for the classical DCA. The following result is a generalization of \cite[Theorem 3.8]{laude2021lower} for the Euclidean case to the anisotropic case:
\begin{proposition}[interchange of forward- and backward-step] \label{thm:invariance_double_min}
Let $g$ be convex. Then $g \infconv \lambda \star \phi_-$ is anisotropically smooth relative to $\phi_-$ with constant $\lambda^{-1}$ and for any $y \in \bR^n$ a backward-step on $g$ at $y$ is a forward-step on $g \infconv \lambda \star \phi_-$ at $y$, i.e.,
$$
y - \lambda \nabla \phi_-^*(\nabla (g \infconv \lambda \star \phi_-)(y)) = \aprox[\lambda]{\phi_-}{g}(y).
$$
For any $x \in \bR^n$ a forward-step on $f$ at $x$ is backward-step on $f \infdeconv \lambda \star \phi$ at $x$, i.e.,
$$
\aprox[\lambda]{\phi}{f \infdeconv \lambda \star \phi}(x) = x - \lambda \nabla \phi^*(\nabla f(x)).
$$
If, in addition, $f$ is convex we have $f \infdeconv \lambda \star \phi \in \Gamma_0(\bR^n)$ with $\dom (f \infdeconv \lambda \star \phi)^* \cap \intr \dom\phi^* \neq \emptyset$, i.e., the constraint qualification for the anisotropic proximal mapping in the convex case is satisfied.
\end{proposition}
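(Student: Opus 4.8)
The plan is to prove the three assertions in order, leaning heavily on the characterization of anisotropic smoothness in \cref{thm:phi_convex_asmooth} and on the Bregman--Moreau calculus in \cref{thm:moreau_decomposition}. For the first claim, I would start from the fact that $g \infconv \lambda \star \phi_-$ is by construction an infimal convolution of $g$ with $\lambda \star \phi_-$. Invoking \cref{thm:phi_convex_asmooth} (specifically the equivalence \labelcref{thm:phi_convex_asmooth:asmooth} $\Leftrightarrow$ \labelcref{thm:phi_convex_asmooth:infconv} applied with reference function $\phi_-$), it suffices to check that $g \infconv \lambda \star \phi_- \in \mathcal{C}^1(\bR^n)$ with $\ran \nabla(g \infconv \lambda\star\phi_-) \subseteq \ran \nabla \phi_-$. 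The smoothness and the gradient formula follow from \cref{thm:moreau_decomposition:smooth}, which gives $\nabla(g \infconv \lambda\star\phi_-) = \nabla \phi_- \circ \lambda^{-1}(\id - \aprox[\lambda]{\phi_-}{g})$; in particular the range inclusion is automatic since the gradient factors through $\nabla \phi_-$. The identity $y - \lambda \nabla \phi_-^*(\nabla (g \infconv \lambda \star \phi_-)(y)) = \aprox[\lambda]{\phi_-}{g}(y)$ is then a direct computation: substituting the gradient formula and using $\nabla\phi_-^* \circ \nabla\phi_- = \id$ collapses the left-hand side to $y - (y - \aprox[\lambda]{\phi_-}{g}(y))$. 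Here I must be careful to track the epi-scaling and the reflection, i.e. that $\nabla(\lambda^{-1}\star\phi_-)^* = \lambda\,\nabla\phi_-^*$ and that $\aprox[\lambda]{\phi_-}{g}$ is the left proximal map of $g$.

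For the second claim I would argue dually. By definition $f \infdeconv \lambda \star \phi = (-f)^\Phi$ is the infimal deconvolution, and since $f$ has the anisotropic descent property, \cref{thm:phi_subgradient_gradient_correspondence} identifies the $\Phi$-subgradient $\partial_\Phi(-f) = \id - \lambda \nabla\phi^*\circ\nabla f$ as single-valued and equal to the forward-step map. I would then translate the statement ``$y^k \in \partial_\Phi(-f)(x^k)$'' via \cref{thm:phi_subgradients}: the equivalence \labelcref{thm:phi_subgradients} (iii) says precisely that $x$ minimizes $x\mapsto (f\infdeconv\lambda\star\phi)^{\text{(dual object)}} - \Phi(x,\cdot)$, which upon unwinding the coupling $\Phi(x,y) = -\lambda\star\phi(x-y)$ is exactly the statement that $x - \lambda\nabla\phi^*(\nabla f(x)) \in \aprox[\lambda]{\phi}{f\infdeconv\lambda\star\phi}(x)$ viewed as a backward-step. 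The cleanest route is to use \cref{thm:moreau_decomposition:aprox}, $\aprox[\lambda]{\phi}{h} = (\id + \lambda\nabla\phi^*\circ\partial h)^{-1}$, with $h = f\infdeconv\lambda\star\phi$, together with the relation between $\partial(f\infdeconv\lambda\star\phi)$ and $\nabla f$ supplied by \cref{thm:phi_subgradients}.

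For the final convex case, assume $f \in \Gamma_0(\bR^n)$. Then by Hiriart-Urruty's formula recorded in the excerpt, $f \infdeconv \lambda\star\phi = (f^* \mathbin{\dot{-}} \lambda^{-1}\phi^*)^*$, which is proper, lsc and convex, hence in $\Gamma_0(\bR^n)$. To verify the constraint qualification $\dom(f\infdeconv\lambda\star\phi)^* \cap \intr\dom\phi^* \neq \emptyset$, I would compute $(f\infdeconv\lambda\star\phi)^* = f^* \mathbin{\dot{-}} \lambda^{-1}\phi^*$ (biconjugation) and argue that anisotropic smoothness of $f$ forces, via \cref{thm:phi_convex_asmooth:str_cvx}, the relation $f^* = \psi + \lambda^{-1}\phi^*$ for some $\psi \in \Gamma_0(\bR^n)$ with $\dom\psi \cap \intr\dom\phi^* \neq\emptyset$; then $(f\infdeconv\lambda\star\phi)^* \mathbin{\dot{-}}$ the subtraction gives $\dom\psi$, whose intersection with $\intr\dom\phi^*$ is nonempty by hypothesis. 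The main obstacle I anticipate is the careful bookkeeping of upper/lower arithmetic in $\exR$ (the $\mathbin{\dot{-}}$ and $\mathbin{\text{\d{\ensuremath{-}}}}$ operations) when manipulating $f^* \mathbin{\dot{-}} \lambda^{-1}\phi^*$ and its conjugate, since the deconvolution is only well-behaved on the appropriate domain; ensuring that the effective domains line up so that the Fenchel identities apply without creating spurious $\pm\infty$ values is the delicate point, and I would resolve it by restricting attention to $\relint\dom f^*$ and invoking the line-segment arguments already used in the proof of \cref{thm:phi_convex_asmooth}.
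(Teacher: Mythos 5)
Your claim (i) is verbatim the paper's argument: the gradient formula from \cref{thm:moreau_decomposition:smooth} gives both the range inclusion needed for \cref{thm:phi_convex_asmooth} and, after cancelling $\nabla\phi_-^*\circ\nabla\phi_-=\id$, the identity $y-\lambda\nabla\phi_-^*(\nabla(g\infconv\lambda\star\phi_-)(y))=\aprox[\lambda]{\phi_-}{g}(y)$. For claim (ii), however, your preferred route has a genuine gap: \cref{thm:moreau_decomposition:aprox} is stated only for $h\in\Gamma_0(\bR^n)$ with $\dom h^*\cap\intr\dom\phi^*\neq\emptyset$, whereas in the second identity $f$ is merely anisotropically smooth (\cref{assum:a2} permits nonconvex $f$), so $h=f\infdeconv\lambda\star\phi$ need not be convex and $\aprox[\lambda]{\phi}{h}=(\id+\lambda\nabla\phi^*\circ\partial h)^{-1}$ is not available; moreover \cref{thm:phi_subgradients} relates $\Phi$-subgradients, not the classical subdifferential $\partial(f\infdeconv\lambda\star\phi)$, so the hybrid you propose conflates the two calculi. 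Your fallback sketch is the correct (and the paper's) argument, but it needs one step you did not make explicit: since $\lambda\le 1/L$, you must first invoke \cref{thm:episcaling_asmoothness} to upgrade $f$ to anisotropic smoothness with constant $\lambda^{-1}$, so that $-f$ is $\Phi$-convex for the coupling $\Phi(x,y)=-\lambda\star\phi(x-y)$; only then does \cref{thm:phi_subgradients} convert $x\in\partial_\Phi(-f)^\Phi(y)$ into $y\in\partial_\Phi(-f)(x)$, after which \cref{thm:phi_subgradient_gradient_correspondence} identifies $\partial_\Phi(-f)(x)=\{x-\lambda\nabla\phi^*(\nabla f(x))\}$.

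In claim (iii) you have a consistent scaling error: since $(\lambda\star\phi)^*=\lambda\phi^*$, the Hiriart-Urruty formula reads $f\infdeconv\lambda\star\phi=(f^*\mathbin{\dot{-}}\lambda\phi^*)^*$, and the decomposition from \cref{thm:phi_convex_asmooth:str_cvx} at smoothness constant $\lambda^{-1}$ is $f^*=\psi+\lambda\phi^*$ — in both places you wrote $\lambda^{-1}\phi^*$. The errors cancel structurally, so the argument survives after correction, but note also that biconjugation gives only $(f^*\mathbin{\dot{-}}\lambda\phi^*)^{**}\le f^*\mathbin{\dot{-}}\lambda\phi^*$, hence the domain inclusion $\dom(f\infdeconv\lambda\star\phi)^*\supseteq\dom(f^*\mathbin{\dot{-}}\lambda\phi^*)$ rather than the equality you assert; the inclusion suffices for the constraint qualification. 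The paper itself reaches the CQ more directly, without the $\psi$-decomposition: from $\ran\nabla f\subseteq\ran\nabla\phi$ it gets $\relint\dom f^*\subseteq\dom\partial f^*\subseteq\intr\dom\phi^*$, and since $\dom(f^*\mathbin{\dot{-}}\lambda\phi^*)=\dom f^*$ the nonempty intersection follows at once — a slightly cheaper argument that avoids the delicate upper-subtraction bookkeeping you flag at the end.
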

A proof can be found in \cref{sec:thm_invariance_double_min}.

Alternatively, our algorithm can be understood in terms of Gauss--Seidel minimization applied to \cref{eq:double_min} for which the same self-duality in the DC-sense is true. Next, we exploit the transfer of smoothness in combination with the self-duality to show linear convergence wrt $G$ if $g$ is anisotropically strongly convex.
\begin{proposition}[transfer of smoothness] \label{thm:strongly_convex}
Let $g$ be anisotropically strongly convex with constant $\mu$ relative to $\phi_-$. Then $g \infconv \frac{1}{L} \star \phi_-$ is anisotropically strongly convex with constant $\sigma = 1/(L^{-1}+\mu^{-1}) < \mu$ and has the anisotropic descent property with constant $L$.
\end{proposition}
A proof can be found in \cref{sec:thm_strongly_convex}.

Thanks to the self-duality of our algorithm in the DC-sense and the previous proposition it can be shown that the method converges linearly wrt to the DC-dual cost $G=(g\infconv \lambda \star \phi_-) + (f \infdeconv \lambda \star \phi)$ evaluated at the forward-steps $y^k$:
\begin{proposition} \label{thm:linear_convergence_dual}
Let $f$ be convex and $g$ be anisotropically strongly convex with constant $\mu$ relative to $\phi_-$. Let $\{y^k\}_{k =0}^\infty$ be the sequence of forward-steps generated by \cref{alg:aproxgrad} with $\lambda=1/L$. Then $G$ has a unique minimizer $y^\star$ and $\{G(y^k)\}_{k =0}^\infty$ converges $Q$-linearly to $G(y^\star)$, in particular,
$$
G(y^{k+1}) - G(y^\star) \leq (1 - \tfrac\mu{\mu + L} ) (G(y^k) - G(y^\star) ).
$$
\end{proposition}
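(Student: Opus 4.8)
The plan is to establish linear convergence for the DC-dual cost $G$ by transferring the entire problem to its dual formulation, applying the primal linear convergence result \cref{thm:linear_convergence}, and exploiting the self-duality \cref{thm:invariance_double_min} to identify the relevant iterates. First I would observe that the DC-dual problem \cref{eq:dc_dual} has exactly the additive composite structure to which our machinery applies: the smooth part is $\tilde f := f \infdeconv \lambda \star \phi$ and the nonsmooth part is $\tilde g := g \infconv \lambda \star \phi_-$, with the roles of $f$ and $g$ interchanged and $\phi$ replaced by $\phi_-$. By \cref{thm:invariance_double_min}, $\tilde f = f \infdeconv \lambda\star\phi \in \Gamma_0(\bR^n)$ satisfies the relevant constraint qualification, and a backward-step on $\tilde f$ at $x$ equals a forward-step on $f$ at $x$, while a backward-step on $g$ is a forward-step on $\tilde g$. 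The key point is that running \cref{alg:aproxgrad} on the primal $(f,g,\phi)$ produces, after relabeling, exactly the sequence of iterates that \cref{alg:aproxgrad} would produce when applied to the dual $(\tilde f, \tilde g, \phi_-)$, but with the forward- and backward-steps swapped; the forward-steps $y^k$ of the primal algorithm become the backward-steps of the dual algorithm.

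Next I would supply the hypotheses needed to invoke the primal linear convergence theorem on the dual problem. Here \cref{thm:strongly_convex} does the essential work: since $g$ is anisotropically strongly convex relative to $\phi_-$ with constant $\mu$, the function $\tilde g = g \infconv \tfrac1L \star \phi_-$ is simultaneously anisotropically strongly convex with constant $\sigma = 1/(L^{-1}+\mu^{-1})$ and anisotropically smooth with constant $L$ relative to $\phi_-$. Thus in the dual problem the nonsmooth part $\tilde g$ plays the role of the strongly convex term that will drive the proximal gradient dominance condition. Invoking \cref{thm:strong_implies_pg_dominance} with the dual data — where the anisotropically strongly convex summand is $\tilde g$ with constant $\sigma$ — yields that $G = \tilde f + \tilde g$ is coercive and strictly convex on its domain, hence has a unique minimizer $y^\star$, and satisfies the anisotropic proximal gradient dominance condition relative to $\phi_-$ with constant $\sigma$. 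Applying \cref{thm:linear_convergence} to the dual problem with $\mu$ replaced by $\sigma$ and $L$ the common smoothness constant then gives
\begin{align*}
G(y^{k}) - G(y^\star) \leq \left(1 - \tfrac{\sigma}{L}\right)^{k} (G(y^0) - G(y^\star)).
\end{align*}
Finally I would simplify the contraction factor: substituting $\sigma = 1/(L^{-1}+\mu^{-1}) = \tfrac{\mu L}{\mu + L}$ gives $\sigma/L = \mu/(\mu+L)$, yielding the claimed rate $(1 - \tfrac{\mu}{\mu+L})^k$.

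The main obstacle I anticipate is the careful bookkeeping required to justify that the dual algorithm's backward-steps coincide with the primal forward-steps $y^k$, and that \cref{thm:strong_implies_pg_dominance} and \cref{thm:linear_convergence} are legitimately applicable to the dual data. Specifically, \cref{thm:strong_implies_pg_dominance} is stated for a problem $f + g$ where $f$ is anisotropically strongly convex; in the dual the strongly convex term is the \emph{nonsmooth} part $\tilde g$ rather than the smooth part $\tilde f$, so one must confirm the theorem is being invoked in the correct configuration — namely treating the dual problem with $\tilde g$ strongly convex, which requires that the roles in those theorems genuinely match after interchange. One must verify the step-size choice $\lambda = 1/L$ is admissible for the dual problem (i.e.\ $\lambda \leq 1/L$ for the dual smoothness constant $L$ of $\tilde g$, and $\lambda \leq \sigma^{-1}$, both of which hold since $\sigma < \mu \le L$ is not guaranteed — one checks $\sigma^{-1} = L^{-1} + \mu^{-1} \geq L^{-1} = \lambda$). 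The remaining steps are routine substitutions once the dual identification is pinned down, so the conceptual heart of the proof lies entirely in correctly setting up the dual correspondence and confirming that the transfer-of-smoothness lemma supplies precisely the hypotheses that the primal linear convergence machinery consumes.
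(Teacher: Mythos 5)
Your proposal is correct in substance and follows the paper's own proof route exactly: transfer of smoothness via \cref{thm:strongly_convex}, the facts $f \infdeconv \lambda \star \phi \in \Gamma_0(\bR^n)$ with the constraint qualification and the identification of the primal forward-steps $y^k$ with the iterates of \cref{alg:aproxgrad} applied to $G$ with reference function $\phi_-$ via \cref{thm:invariance_double_min}, then \cref{thm:strong_implies_pg_dominance} and \cref{thm:linear_convergence} with the rate algebra $\sigma/L = \mu/(\mu+L)$ for $\sigma = \mu L/(\mu+L)$.

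One correction, which also dissolves the worry you flag but leave unresolved in your last paragraph: your opening role assignment is inverted. In the dual decomposition it is $\tilde g = g \infconv \lambda \star \phi_-$ that is the \emph{smooth} part --- this is precisely what \cref{thm:strongly_convex} delivers --- while $\tilde f = f \infdeconv \lambda \star \phi$ is the nonsmooth convex part (a deconvolution is in general merely in $\Gamma_0(\bR^n)$; it undoes the smoothing of $f$). Correspondingly, the dual algorithm takes forward-steps on $\tilde g$ (these coincide with the primal backward-steps on $g$, producing $x^{k+1}$) and backward-steps on $\tilde f$ (these coincide with the primal forward-steps on $f$, producing $y^{k+1}$), which is the content of \cref{thm:invariance_double_min}. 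With the roles so assigned, \cref{thm:strong_implies_pg_dominance} applies \emph{verbatim} with its ``$f$'' taken to be $\tilde g$ (anisotropically strongly convex with constant $\sigma$ and smooth with constant $L$, both relative to $\phi_-$) and its ``$g$'' taken to be $\tilde f$; contrary to what you repeatedly write, the strongly convex summand in the dual is the smooth one, so no extension of that proposition to a configuration with a strongly convex nonsmooth part is needed. (Indeed, \cref{ex:counter_example_transfer_str_cvx} shows the analogous primal-side transfer fails, which is exactly why the argument is run on the dual.) Finally, your parenthetical step-size check is garbled: what \cref{thm:linear_convergence} needs is $0<\sigma<L$, which always holds since $\sigma = \mu L/(\mu+L) < L$ irrespective of whether $\mu \leq L$, together with $\lambda = 1/L \leq \sigma^{-1} = L^{-1}+\mu^{-1}$, which you do verify correctly.
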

A proof can be found in \cref{sec:thm_linear_convergence_dual}.

The above result can be translated to primal $R$-linear convergence by showing that $G(y^k) = F_\lambda(x^k)$ which relates the dual cost $G$ and the forward-backward envelope $F_\lambda$ to each other. This relation was observed previously in the Euclidean setting \cite[Lemma 6]{themelis2020envelope}:
\begin{proposition}[DC-dual vs. forward-backward envelope] \label{thm:fbe_dc}
Let $\{x^k\}_{k =0}^\infty$ be the sequence of backward-steps and $\{y^k\}_{k =0}^\infty$ be the sequence of forward-steps generated by \cref{alg:aproxgrad}. Then it holds that
$$
G(y^k)=(g\infconv \lambda \star \phi_-)(y^k) + (f \infdeconv \lambda \star \phi)(y^k)=F_\lambda(x^k),
$$
for all $k \in \bN_0$.
\end{proposition}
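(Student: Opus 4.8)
The goal is to show the identity $G(y^k) = F_\lambda(x^k)$ relating the DC-dual cost evaluated at the forward-step to the forward-backward envelope evaluated at the backward-step. The plan is to unfold both sides using the definitions of the two Moreau-type operations appearing in $G$, namely the anisotropic Moreau envelope $g \infconv \lambda \star \phi_-$ and the infimal deconvolution $f \infdeconv \lambda \star \phi = (-f)^\Phi$, and then exploit the fact established in \cref{thm:invariance_double_min} that a forward-step on $f$ at $x^k$ is a backward-step on $f \infdeconv \lambda \star \phi$ at $x^k$ (and dually for $g$). The central bookkeeping fact is the double-min identity \cref{eq:double_min}, which expresses the value of $F$ through the joint infimum over $(x,y)$ of $g(x) + \lambda \star \phi(x-y) + (-f)^\Phi(y)$; the content of the present proposition is that evaluating the two summands of $G$ at the common point $y^k$ reproduces exactly the $F_\lambda$ value at the partner point $x^k$.

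\textbf{Key steps.} First I would recall the definition $F_\lambda(\bar x) = f(\bar x) + (g \infconv \lambda \star \phi_-)(\bar x - \lambda \nabla \phi^*(\nabla f(\bar x))) - \lambda \star \phi(\lambda \nabla\phi^*(\nabla f(\bar x)))$ from \Cref{sec:aproxgrad_analysis}, and observe that by \cref{eq:forward} the argument $\bar x - \lambda \nabla\phi^*(\nabla f(\bar x))$ evaluated at $\bar x = x^k$ is precisely the forward-step $y^k$. Hence $F_\lambda(x^k) = f(x^k) + (g \infconv \lambda \star \phi_-)(y^k) - \lambda \star \phi(\lambda \nabla\phi^*(\nabla f(x^k)))$. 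Comparing with $G(y^k) = (g \infconv \lambda \star \phi_-)(y^k) + (f \infdeconv \lambda \star \phi)(y^k)$, the first summand already matches, so the whole claim reduces to the scalar identity
\begin{align}
(f \infdeconv \lambda \star \phi)(y^k) = f(x^k) - \lambda \star \phi(\lambda \nabla\phi^*(\nabla f(x^k))).
\end{align}
The second step is to prove this identity. Using $f \infdeconv \lambda \star \phi = (-f)^\Phi$ with $\Phi(x,y) = -\lambda \star \phi(x-y)$, \cref{thm:phi_subgradients}\labelcref{thm:phi_subgradient_gradient_correspondence:phi_subgrad} in the guise of \cref{thm:phi_subgradient_gradient_correspondence} tells us that $y^k \in \partial_\Phi(-f)(x^k)$, which via \cref{thm:phi_subgradients}(ii) gives $(-f)(x^k) + (-f)^\Phi(y^k) = \Phi(x^k, y^k) = -\lambda \star \phi(x^k - y^k)$. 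Rearranging yields $(f \infdeconv \lambda \star \phi)(y^k) = f(x^k) - \lambda \star \phi(x^k - y^k)$, and since $x^k - y^k = \lambda \nabla\phi^*(\nabla f(x^k))$ by the forward-step \cref{eq:forward}, this is exactly the required scalar identity.

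\textbf{Main obstacle.} The main technical point, and where care is needed, is the invocation of the $\Phi$-subgradient machinery: to apply \cref{thm:phi_subgradients}(ii) I must first know that $-f$ is $\Phi$-convex so that the equality $(-f)^{\Phi\Phi} = -f$ holds at $x^k$, which requires anisotropic smoothness of $f$ with a constant at least $L = \lambda^{-1}$. This follows from \cref{assum:a2} together with \cref{thm:episcaling_asmoothness}, since $\lambda \leq 1/L$ guarantees $f$ is anisotropically smooth relative to $\lambda \star \phi$ as well. The remaining subtlety is simply checking the well-definedness of $\nabla\phi^*(\nabla f(x^k))$, which is the constraint qualification $\ran \nabla f \subseteq \ran \nabla \phi$ built into \cref{assum:a2}. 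Once these hypotheses are in place the proof is a direct substitution, so I expect no serious analytic difficulty beyond carefully tracking the epi-scaling conventions and the sign in the definition of $\Phi$.
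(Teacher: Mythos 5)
Your proposal is correct and follows essentially the same route as the paper: both arguments reduce the claim to the scalar identity $(f \infdeconv \lambda \star \phi)(y^k) + \lambda \star \phi(x^k - y^k) = f(x^k)$ (the paper's \cref{eq:fbe_dc_1}) and then substitute $x^k - y^k = \lambda \nabla\phi^*(\nabla f(x^k))$ into the definition of $F_\lambda$. The only cosmetic difference is that you obtain this identity from the Fenchel--Young-type equality of \cref{thm:phi_subgradients} via $y^k \in \partial_\Phi(-f)(x^k)$, whereas the paper routes through the argmin characterization of \cref{thm:invariance_double_min} together with the $\Phi$-biconjugate identity of \cref{thm:phi_envelope_equivalence} --- two equivalent faces of the same equivalence (and the implication from the subgradient inclusion to that equality requires no $\Phi$-convexity, so the prerequisite you flag is sufficient but slightly stronger than necessary).
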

A proof is provided in \cref{sec:thm_fbe_dc}.
Now we are ready to state the primal $R$-linear convergence:
\begin{corollary} \label{thm:linear_convergence_primal}
Let $f$ be convex and $g$ be anisotropically strongly convex with constant $\mu$ relative to $\phi_-$. Let $\{x^k\}_{k =0}^\infty$ be the sequence of backward-steps generated by \cref{alg:aproxgrad} with step-size $\lambda=L^{-1}$. Then $\{F(x^{k+1}) \}_{k =0}^\infty$ converges $R$-linearly, in particular,
$$
F(x^{k+1}) - \inf F \leq (1 - \tfrac\mu{\mu + L} )^{k} (F(x^0) - \inf F).
$$
\end{corollary}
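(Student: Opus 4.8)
The plan is to obtain the primal rate by pulling back the dual linear convergence established in \cref{thm:linear_convergence_dual} through the correspondence $G(y^k)=F_\lambda(x^k)$ proved in \cref{thm:fbe_dc}. The hypotheses of the corollary are exactly those of \cref{thm:linear_convergence_dual}, so I may take for granted that $G$ has a unique minimizer $y^\star$ and that the sequence of forward-steps satisfies $G(y^k)-G(y^\star)\le(1-\tfrac{\mu}{\mu+L})^k(G(y^0)-G(y^\star))$. The only work is to sandwich $F(x^{k+1})-\inf F$ between multiples of $G(y^k)-G(y^\star)$ and $G(y^0)-G(y^\star)$.

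First I would record two bridging identities. Recalling that $\lambda\gap{\phi}{F}(x^k,\lambda)=F(x^k)-F_\lambda(x^k)$, the computation inside the proof of \cref{thm:sufficient_descent} in fact shows $F(x^{k+1})\le F_\lambda(x^k)$, and by \cref{thm:fbe_dc} this reads $F(x^{k+1})\le F_\lambda(x^k)=G(y^k)$. Second, I would invoke the double-min duality \crefrange{eq:double_min}{eq:dc_dual_plain} to identify the optimal values: since $\inf_x F(x)=\inf_y\big((-f)^\Phi(y)-g^\Phi(y)\big)=\inf_y G(y)$ and $G$ attains its infimum at $y^\star$, we get $\inf F=\inf G=G(y^\star)$. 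Finally, using $F_\lambda\le F$ pointwise (equivalently $\gap{\phi}{F}\ge0$), the initial gap is controlled by $G(y^0)-G(y^\star)=F_\lambda(x^0)-\inf F\le F(x^0)-\inf F$.

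Chaining these, for every $k\in\bN_0$ I would write
\[
F(x^{k+1})-\inf F\le G(y^k)-G(y^\star)\le\big(1-\tfrac{\mu}{\mu+L}\big)^k\big(G(y^0)-G(y^\star)\big)\le\big(1-\tfrac{\mu}{\mu+L}\big)^k\big(F(x^0)-\inf F\big),
\]
which is the claimed estimate. I do not expect a genuine obstacle here, since the result is essentially a bookkeeping consequence of the self-duality of the scheme; the one point requiring care is the index alignment, namely that the dual bound sits at $y^k$ while the decrease inequality gives $F(x^{k+1})\le G(y^k)$, so that the primal statement is naturally phrased for $F(x^{k+1})$ with exponent $k$ rather than $k+1$. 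The second point to state cleanly is the value identity $\inf F=G(y^\star)$, which is where the double-min duality and the uniqueness of the dual minimizer from \cref{thm:linear_convergence_dual} are used together.
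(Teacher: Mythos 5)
Your proposal is correct and matches the paper's own argument essentially step for step: both derive $F(x^{k+1})\le F_\lambda(x^k)\le F(x^k)$ from the anisotropic descent inequality (you rightly note this is already contained in the proof of \cref{thm:sufficient_descent}), identify $\inf F = G(y^\star)$ via the double-min duality \cref{eq:double_min}, bridge with $F_\lambda(x^k)=G(y^k)$ from \cref{thm:fbe_dc}, and chain these with the dual rate of \cref{thm:linear_convergence_dual}, including the same index alignment yielding the bound for $F(x^{k+1})$ with exponent $k$. No gaps.
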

We provide a proof in \cref{sec:thm_linear_convergence_primal}.

\fi

\section{Applications} \label{sec:apps}
\subsection{Plus-minus trick} \label{sec:plusminus}
In our algorithm the gradient $\nabla f(x^k)$ has to be a compatible input to the preconditioner $\nabla \phi^*$. However, in some important cases $\nabla \phi^*$ does not have full domain. In particular this is the case if $\phi= \SumExp$ and hence $\nabla \phi^*=\Log$ is the component-wise logarithm with $\dom \nabla \phi^* =\bR_{++}^n$. As we shall demonstrate this is not restrictive by employing the plus-minus trick. Furthermore, this reformulation leads to a generalization of the so-called parallel update algorithm \cite{collins2002logistic}, a powerful approach for exponential and logistic regression.
Henceforth assume $\phi=\SumExp$. In a nutshell the key idea is to decompose the gradient $\nabla f(x)=T_+(x)-T_-(x)$ into a positive part $T_+(x)$ and a negative part $-T_-(x)$ such that $T_+(x),T_-(x) \in \bR_{++}^n$. Applying the logarithmic preconditioner $\Log$ to both components $T_+(x),T_-(x)$ separately, the resulting vector $-R(x)=\Log(T_-(x))-\Log(T_+(x))$ remains a descent direction of the cost $f$ as in \cref{rem:dual_space_pc}: In fact, by strict monotonicity of $\nabla \phi^*=\Log$ we have:
\begin{align}
\langle -R(x), \nabla f (x) \rangle &= \langle \nabla \phi^*(T_-(x))-\nabla \phi^*(T_+(x)), T_+(x) -T_-(x) \rangle < 0.
\end{align}
This decomposition can be framed in terms of the anisotropic proximal gradient method by introducing an additional variable $x_-$ along with the linear sharing constraint $x_- = -x$ enforced via $g$.
For that purpose assume that $f(x)=f_{\pm}(x,x_-)$ such that $f_{\pm}(x,x_-)$ is exponentially smooth jointly in $(x,x_-)$, i.e., anisotropically smooth relative to $\phi(x, x_-):=\SumExp(x)+\SumExp(x_-)$ with constant $L$.
Via the choices $T_+(x)=\nabla_x f_\pm(x, x_-)$ and $T_-(x)=\nabla_{x_-} f_\pm(x, x_-)$ the gradient $\nabla f_\pm(x, -x)=T_+(x)-T_-(x)=\nabla f(x)$ furnishes the sought decomposition into two positive parts.
Then the $x$-update reads $(x^{k+1},x_-^{k+1}) = \aprox[\lambda]{\phi}{g}(y^k, y_-^k)$ for $y^k = x^k - \lambda \Log(T_+(x^k))$ and $y_-^k = x_-^k - \lambda \Log(T_-(x_-^k))$. Thanks to \cref{thm:moreau_decomposition:decomp} it can be expressed in terms of the Bregman projection $\bprox[\lambda]{\phi^*}{g^*}$ for $g^*=\delta_{C^*}$ onto the consensus subspace $C^*=\{(x_1, x_2) \in \bR^n \times \bR^n \mid x_1=x_2\}$ which thanks to \cite[Example 3.16(i)]{BC03} admits a simple closed form solution $(u,u) = \bprox[\lambda]{\phi^*}{g^*}(w, w_-)$ in terms of the geometric mean $u = \Exp((\Log(w) + \Log(w_-)) / 2)$. Overall, this yields $\aprox[\lambda]{\phi}{g}(y,y_-)=((y - y_-)/2, (y_- - y)/2)$ and hence, by eliminating $x_-^k$ and $y^k,y_-^k$ from the algorithm the $x$-update can be written compactly as 
\begin{equation*}
x^{k+1} = x^k - \tfrac{\lambda}{2}(\Log(T_+(x^k))- \Log(T_-(x^k) ).
\end{equation*}
Next we show how to obtain $f_\pm$ from $f$: In many applications $f(x)=h(Ax)+\langle c,x \rangle$ for $h$ being exponentially smooth with constant $1/\sigma$, $A \in \bR^{m \times n}$ and $c \in \bR^n$.
We define $A_+,A_- \in \bR_+^{m \times n}$ as
\begin{align}
(A_+)_{ij} := \begin{cases} A_{ij} & \text{if $A_{ij} \geq 0$} \\
0 & \text{otherwise,}
\end{cases}
\qquad (A_-)_{ij} := \begin{cases} -A_{ij} & \text{if $A_{ij} \leq 0$} \\
0 & \text{otherwise} \end{cases}
\end{align}
and $A_{\pm}=\begin{pmatrix} A_+ & A_-\end{pmatrix}$.
Analogously, we define $c_+, c_-$ such that overall $A_+ - A_- = A$ and $c_+ - c_-= c$.
Then we choose $f_{\pm}(x, x_-):=h(A_+ x + A_- x_-) + \langle x, c_+\rangle + \langle x_-, c_-\rangle$. By adding some $\varepsilon > 0$ to the entries of $A_+, A_-, c_+, c_-$ we can ensure that $A_+, A_-,c_+, c_- \in \bR_{++}^n$ without changing the optimization problem. In light of \cref{thm:exponential_smoothness_penalty}, $(x,x_-)\mapsto h(A_+ x + A_- x_-)$ is exponentially smooth with constant $L=\|A_\pm\|_{\infty}/\sigma$. Thanks to \cref{thm:phi_convex_asmooth} $\langle x, c_+\rangle + \langle x_-, c_-\rangle$ is exponentially smooth for any $L>0$. Invoking \cref{thm:closed_addition_exp} we deduce that $f_\pm$ remains exponentially smooth with constant $L$.
Actually, it suffices to add $\varepsilon > 0$ to the entries of $c_+,c_-$ only. Then we can take $L=\|A\|_{\infty}/\sigma$.
The complete algorithm reads
\begin{align}
T_+(x^k) &= A_+^\top \nabla h(Ax^k) + c_+ \label{eq:parallel_update_p} \\
T_-(x^k)  &= A_-^\top \nabla h(Ax^k) + c_- \label{eq:parallel_update_m} \\
x^{k+1} &= x^k - \tfrac{\lambda}{2}(\Log(T_+(x^k))- \Log(T_-(x^k) ), \label{eq:parallel_update_pm}
\end{align}
for $\lambda = 1/L$.
In the subsequent sections we refer to this algorithm as anisoPG$_{\pm}$ and LS-anisoPG$_\pm^\alpha$ for the linesearch version.
For the special case that $c=0$, $\sigma = 1$, $h=\SumExp$ or $h$ being the logistic loss and $A \in [-1,1]^{m \times n}$ with $\sum_{j=1}^n |A_{ij}| \leq 1$ for every $i$ the smoothness constant and step-size can be chosen to be $\lambda=L=1$ and anisoPG$_{\pm}$ specializes to the parallel update optimization algorithm \cite{collins2002logistic} for AdaBoost \cite{freund1997decision,schapire1998improved} and logistic regression with $q_0=(1,1,\ldots, 1) \in \bR_+^n$.

In the following subsections we present numerical evaluations for diverse problems. The code for reproducing these experiments is publicly available\footnote{\url{https://github.com/EmanuelLaude/anisotropic-proximal-gradient}}.

\begin{figure}[!t]
\centering
\subfloat{
        \centering
	\includegraphics[width=0.32\textwidth]{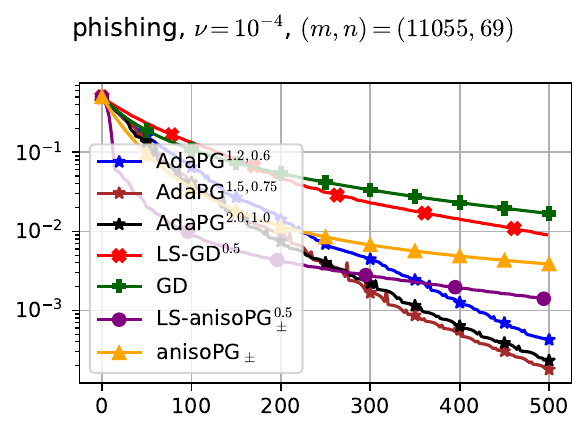}
} 
\subfloat{
        \centering
	\includegraphics[width=0.32\textwidth]{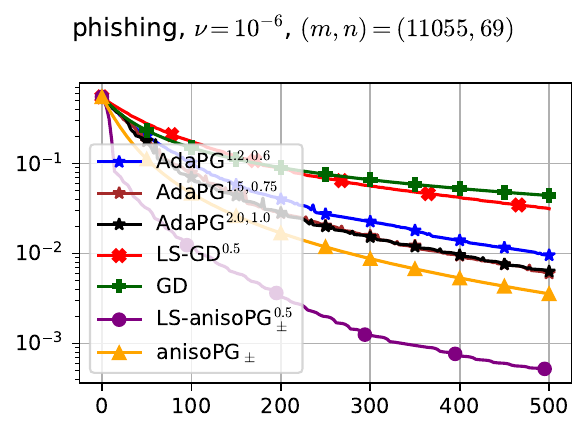}
} 
\subfloat{
        \centering
	\includegraphics[width=0.32\textwidth]{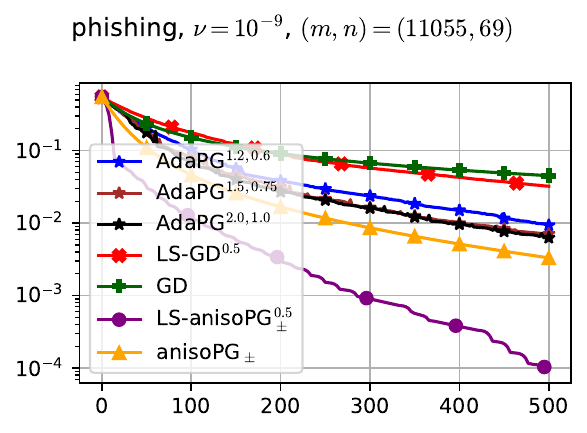}
} \\ 
\subfloat{
        \centering
	\includegraphics[width=0.32\textwidth]{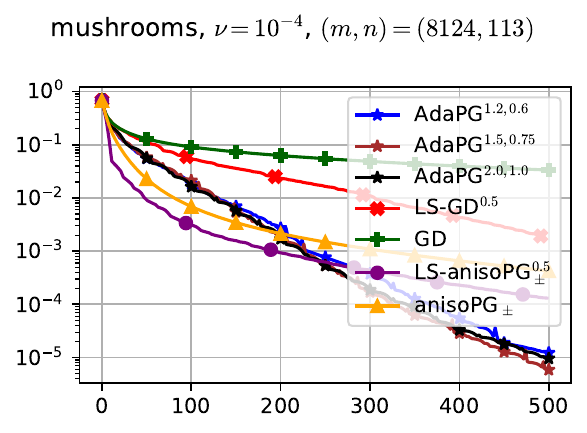}
} 
\subfloat{
        \centering
	\includegraphics[width=0.32\textwidth]{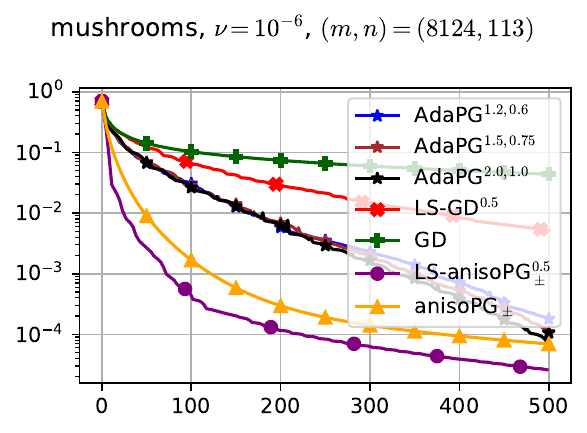}
} 
\subfloat{
        \centering
	\includegraphics[width=0.32\textwidth]{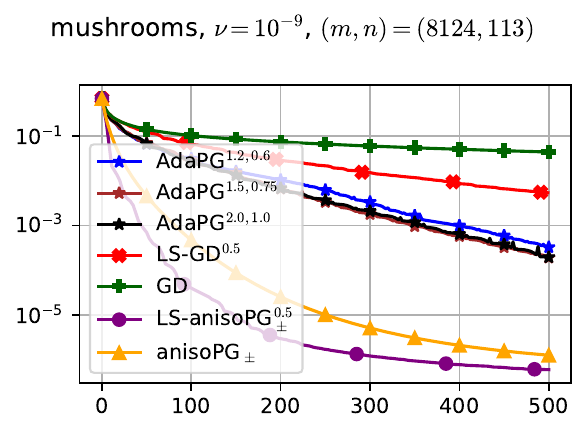}
} \\ 
\subfloat{
        \centering
	\includegraphics[width=0.32\textwidth]{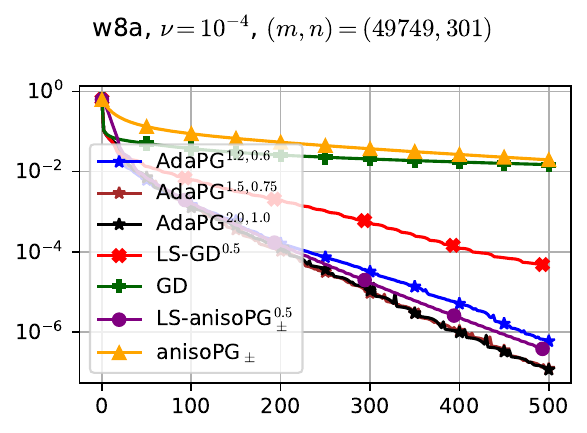}
} 
\subfloat{
        \centering
	\includegraphics[width=0.32\textwidth]{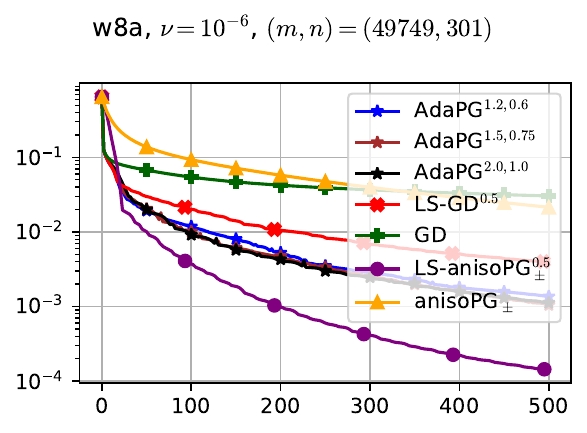}
} 
\subfloat{
        \centering
	\includegraphics[width=0.32\textwidth]{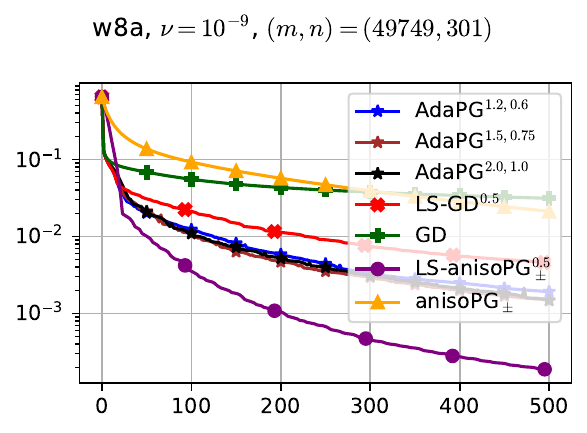}
} \\ 
\subfloat{
        \centering
	\includegraphics[width=0.32\textwidth]{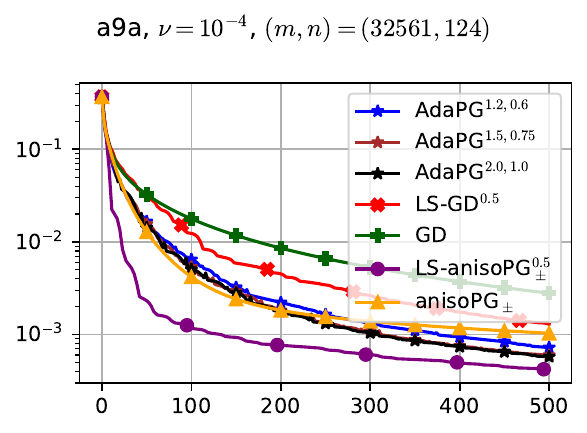}
} 
\subfloat{
        \centering
	\includegraphics[width=0.32\textwidth]{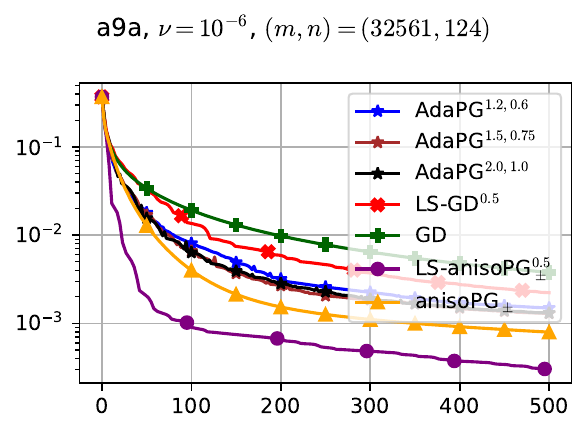}
} 
\subfloat{
        \centering
	\includegraphics[width=0.32\textwidth]{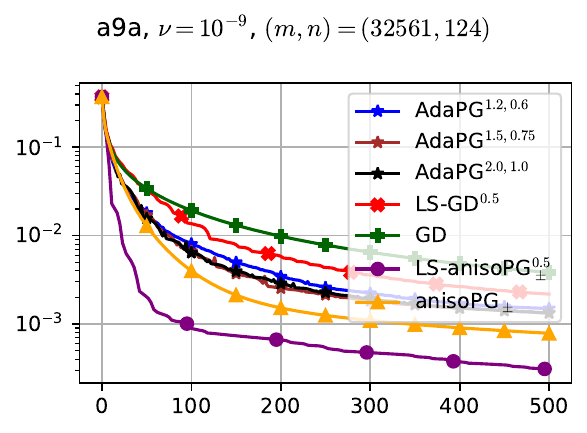}
} \\ 
\subfloat{
        \centering
	\includegraphics[width=0.32\textwidth]{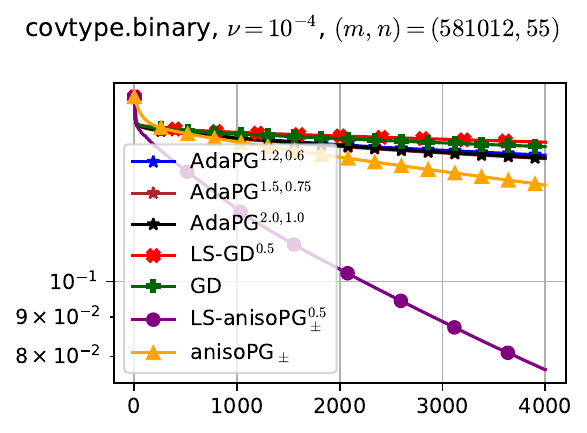}
} 
\subfloat{
        \centering
	\includegraphics[width=0.32\textwidth]{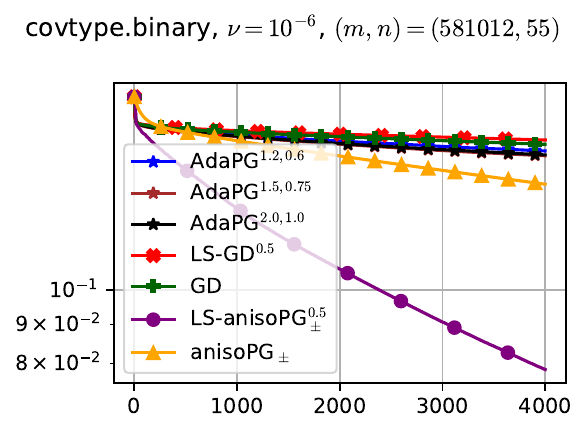}
} 
\subfloat{
        \centering
	\includegraphics[width=0.32\textwidth]{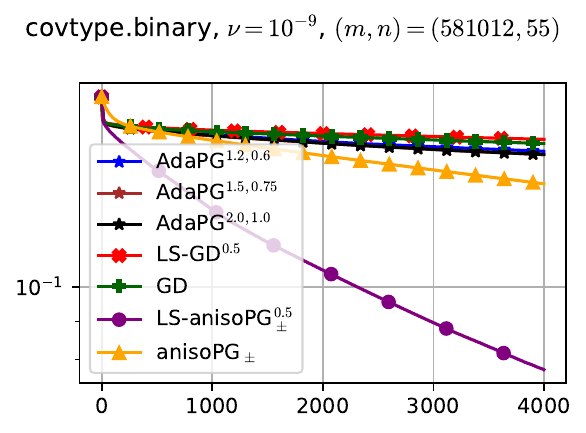}
} 
\caption{Quadratically regularized logistic regression. On the $y$-axis we plot the function gap $F(x) - F(x^\star)$ and on the $x$-axis the number of calls to $A,A^\top$.}
\label{fig:logistic}
\end{figure}
\begin{figure}[!t]
\centering
\subfloat[\label{fig:logistic_comparison_a}]{
        \centering
	\includegraphics[width=0.49\textwidth]{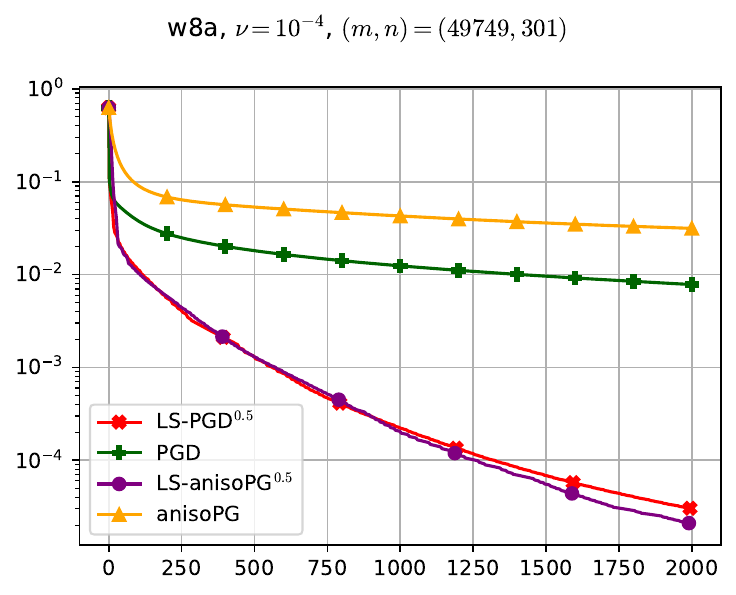}
} 
\subfloat[\label{fig:logistic_comparison_b}]{
        \centering
	\includegraphics[width=0.49\textwidth]{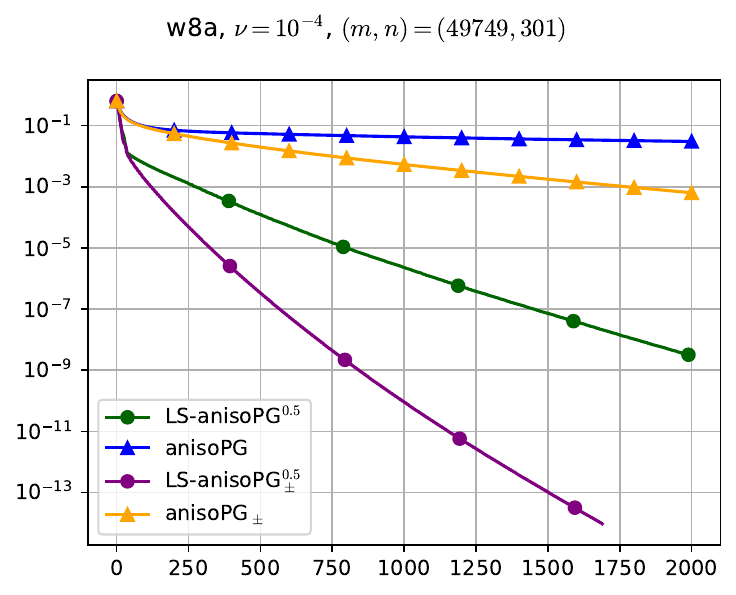}
} 
\caption{(a) Comparison of Euclidean methods with anisoPG using the symmetrized logistic reference function on the task of $1$-norm regularized logistic regression. (b) Comparison of two variants of anisoPG (with linesearch): exponential smoothness with plus-minus decomposition anisoPG$_\pm$ vs symmetrized logistic reference function anisoPG. On the $y$-axis we plot the function gap $F(x) - F(x^\star)$ and on the $x$-axis the number of calls to $A,A^\top$.}
\label{fig:logistic_comparison}
\end{figure}
\subsection{Regularized logistic regression}
In this subsection we consider regularized logistic regression. Here, one is interested in the minimization of the following cost function:
\begin{align} \label{eq:logistic_regression}
\min_{x \in \bR^n} \frac{1}{m} \sum_{i=1}^m \ln(1+\exp( -b_i \langle a_i, x \rangle)) + R(x),
\end{align}
for $m$ input output training pairs $(a_i, b_i) \in [-1, 1]^n \times \{-1, 1\}$ and $R$ is a possibly nonsmooth regularizer. First we choose $R(x)=\frac{\nu}{2}\|x\|^2$.
It is easy to see that $f_i(x):=\ln(1+\exp( -b_i \langle a_i, x \rangle)) = \logsumexp(0,  \langle -b_i a_i, x \rangle)$. Using \cref{thm:exponential_smoothness_penalty,ex:logsumexp,thm:closed_addition_exp} and the plus-minus trick described above we can write the loss function $f(x)=\frac{1}{m}\sum_{i=1}^m f_i(x)$ as $f_{\pm}(x,x_-)$ which is exponentially smooth with constant $L=\|A\|_{\infty}$. Invoking the polylogarithmic plus-minus decomposition of $\frac{1}{2}\|\cdot\|^2$, \cref{ex:polylog}, the quadratic regularizer can be written as $\nu \Theta(x) + \nu \Theta(x_-)$ which is exponentially smooth in the variables $(x,x_-)$ with constant $1$. Hence the sum $f_{\pm}(x,x_-) + \nu \Theta(x) + \nu \Theta(x_-)$ is a plus-minus decomposition of the regularized loss. In particular it is exponentially smooth with constant $L=\max\{1, \|A\|_{\infty}\}$.
In \cref{fig:logistic} we compare anisotropic proximal gradient with constant step-size (anisoPG$_\pm$) and anisotropic proximal gradient with linesearch for $\alpha=0.5$ (LS-anisoPG$_\pm^{0.5}$) to standard gradient descent (GD), gradient descent with linesearch (LS-GD$^{0.5}$) and a recent family of linesearch-free adaptive proximal gradient methods \cite{latafat2023convergence,malitsky2020adaptive} AdaPG$^{q,r}$ with parameters $q$ and $r$ using the publicly available LIBSVM datasets\footnote{\url{https://www.csie.ntu.edu.tw/~cjlin/libsvmtools/datasets/}} for classification.
We implement LS-GD$^\alpha$ as an instance of LS-anisoPG$^\alpha$ with $\phi=\frac{1}{2}\|\cdot\|^2$ and Lipschitz constant $\mathrm{lip}=\|A\|_2^2/4 + \nu$. We choose $\lambda_{\mathrm{-1}}=1/L$ for the exponential case and $\lambda_{\mathrm{-1}}=1.99/{\mathrm{lip}}$ for the Euclidean case and choose $\alpha=0.5$ in both cases.
For a fair comparison between the linesearch-based algorithms and the methods without linesearch we count the number of calls to $A,A^\top$ which in our setup dominate the computational burden. On the $y$-axis we plot the suboptimality gap $F(x^k)-F(x^\star)$ where $x^\star$ is a high-accuracy solution precomputed with L-BFGS.
It can be seen that the anisotropic variants almost consistently perform better than their Euclidean counterparts wrt. the number of calls to $A,A^\top$. In particular for small regularization weights LS-anisoPG$_\pm^{0.5}$ achieves substantially better performance than the state-of-the-art linesearch-free adaptive proximal gradient methods \cite{latafat2023convergence,malitsky2020adaptive} AdaPG$^{q,r}$. Furthermore, LS-anisoPG$_\pm^{0.5}$ performs significantly better than the version without linesearch (anisoPG$_\pm$) which in some cases also performs better than AdaPG$^{q,r}$. LS-anisoPG$_\pm^{0.5}$ can be seen as a linesearch extension of the parallel update algorithm \cite{collins2002logistic}.

Thanks to \cref{ex:logistic} $f(x)=\tfrac{1}{m} \sum_{i=1}^m \ln(1+\exp( -b_i \langle a_i, x\rangle))$ is also anisotropically smooth relative to the symmetrized logistic loss $\phi(x)=\sum_{j=1}^n h(x_j)$ for $h(t)=2\ln(1+ \exp(t)) - t$ with constant $\|A\|_{\infty, 2}$.
A potential advantage of this alternative reference function is that the nonsmooth part $g$ does not involve the sharing constraint as in the plus-minus decomposition. As such it allows for a broader choice of regularizers: We discuss the following choices of regularizers $g:=R=\nu\|\cdot\|_1$ and $g:=R=\tfrac{\nu}{2}\|\cdot\|^2$. For both choices it holds that $\dom g^*\cap \intr \dom \phi_-^* \neq \emptyset$ and thus the algorithm can be applied.
For $g=\nu\|\cdot\|_1$ a closed form solution to the anisotropic proximal mapping is obtained via the Bregman--Moreau decomposition \cref{thm:moreau_decomposition:decomp}. Then the anisotropic proximal mapping can be computed in terms of the Bregman proximal mapping of $g^*=\delta_{[-\nu,\nu]^n}$ whose solution involves a simple clipping operation. The anisotropic proximal mapping of $g$ thus amounts to the following soft-thresholding operation:
$
[\aprox[\lambda]{\phi_-}{g}(y)]_i 
= \sign(y_i)\max\{|y_i|- \rho, 0\},
$
for $\rho = \lambda (h^*)'(\nu)$.
For $g=\tfrac{\nu}{2}\|\cdot\|^2$ by a change of variable the anisotropic proximal mapping of $g$ can be written as the Euclidean proximal mapping of the logistic loss $x \mapsto \ln(1+ \exp(x))$. Thanks to \cite[Proposition 2]{briceno2019random} its solution can be obtained in closed form via the generalized Lambert $W$ function \cite{mezHo2017generalization,maignan2016fleshing}.
\begin{figure}[!t]
\centering
\subfloat[\label{fig:logsumpexp_sumexp_a}]{
        \centering
	\includegraphics[width=0.49\textwidth]{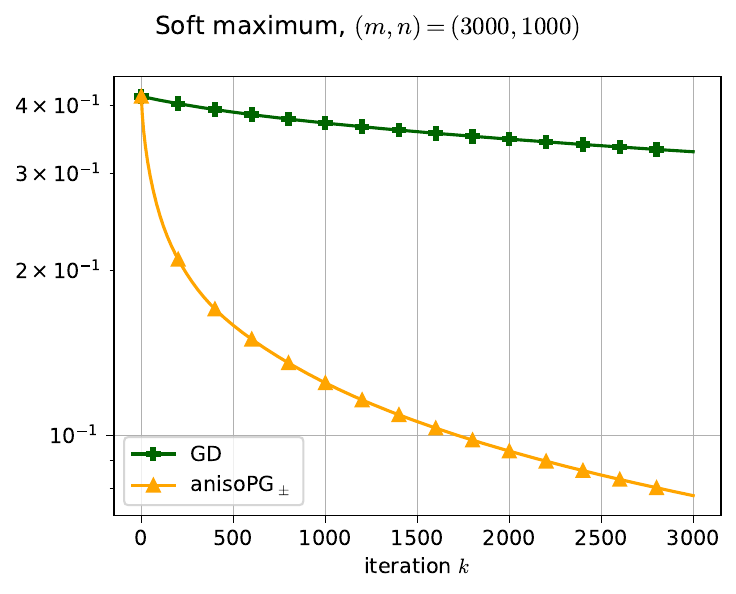}
} 
\subfloat[\label{fig:logsumpexp_sumexp_b}]{
        \centering
	\includegraphics[width=0.49\textwidth]{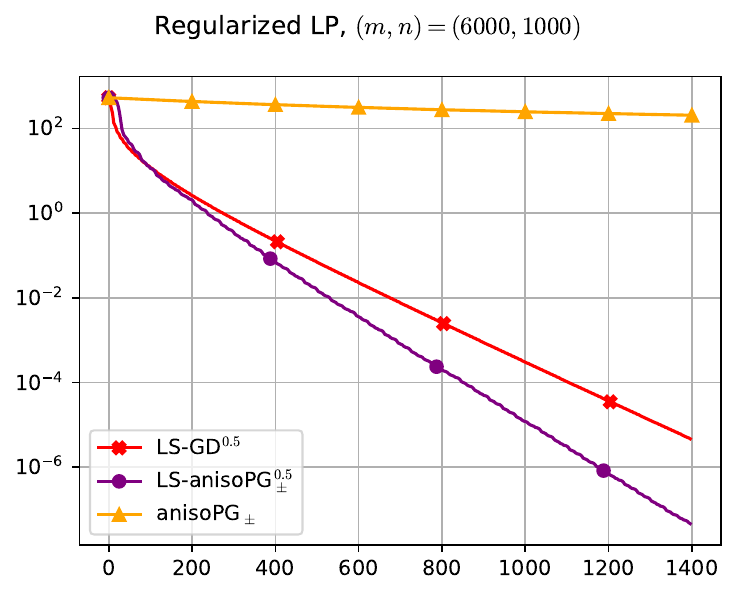}
} 
\caption{(a) Soft maximum with $\logsumexp$. (b) exponentially regularized LP. On the $y$-axis we plot the function gap $F(x) - F(x^\star)$.}
\label{fig:logsumpexp_sumexp}
\end{figure}

In \cref{fig:logistic_comparison_a} we compare the anisotropic proximal gradient method as described above with the Euclidean counterparts, i.e., \cref{alg:linesearch_aproxgrad} with $\phi=\frac{1}{2}\|\cdot\|^2$ on the task of $\ell_1$-regularized logistic regression, i.e., $R=\nu\|\cdot\|_1$. It can be seen that LS-anisoPG$^{0.5}$ with the symmetrized logistic reference function performs on par with the Euclidean proximal gradient method with linesearch LS-PGD$^{0.5}$. In \cref{fig:logistic_comparison_b} we compare the performance of anisoPG with the exponential reference function and the plus-minus decomposition (LS-anisoPG$_\pm^{0.5}$, anisoPG$_\pm$) against the versions with the symmetrized logistic reference function for $R=\frac\nu2\|\cdot\|_2^2$. It can be seen that anisoPG with the exponential reference function achieves superior performance.

\subsection{Soft maximum}
In this subsection we consider soft maximum problems that take the form
\begin{align}
\min_{x \in \bR^n} \sigma \star \logsumexp(Ax -b),
\end{align}
for a linear operator $A \in \bR^{m \times n}$ and vector $b\in \bR^m$. We generate a random matrix that has condition number $150$ and norm $\|A\|_2 = 100$. By the calculus of exponential smoothness the plus-minus decomposition of the cost is exponentially smooth with constant $L=\|A\|_{\infty}/\sigma$. In \cref{fig:logsumpexp_sumexp_a} we compare vanilla anisotropic proximal gradient without linesearch with stepsize $1/L$ against its Euclidean counterpart with stepsize $1.99/\mathrm{lip}$ with $\mathrm{lip}=\|A\|_2^2/\sigma$ showing significant improvements. Althought it is hard to compare the update directions of both algorithms note the significant difference between the smoothness constants $L=19,449.51$ and $\mathrm{lip}=200,000.00$.

\subsection{Exponential regularization for linear programs}
In this subsection we consider exponential smoothing for linear programming. Given a \emph{linear program} (LP) in standard dual form with inequality constraints $Ax - b \leq 0$ we consider an unconstrained approximation to the original LP which takes the form
\begin{align} \label{eq:LP_exp}
\min_{x \in \bR^n}~ \langle c, x \rangle + \sigma \star \SumExp(Ax -b),
\end{align}
for $A \in \bR^{m \times n}$, $c \in \bR^n$ and $b \in \bR^m$ and $\sigma >0$ is a regularization parameter. The function $\SumExp(Ax-b)$ can be interpreted as a penalty function for the constraints. Exponential smoothing is dual to entropic regularization which is a common technique for solving large-scale optimal transport problems \cite{cuturi2013sinkhorn}. Problems of this form also arise as subproblems in the exponential multiplier method \cite{tseng1993convergence} aka the Bregman augmented Lagrangian method \cite{eckstein2003practical,solodov2000inexact} when applied to linear programs. Furthermore, the exponential loss function appears in boosting for machine learning \cite{freund1997decision,schapire1998improved}.

In \cref{fig:logsumpexp_sumexp_b} we compare anisotropic proximal gradient (with linesearch) with the exponential reference function to Euclidean gradient descent with linesearch on minimizing the regularized LP cost. We generate a random LP in dual form such that $A\in \bR^{6000 \times 1000}$ has condition number $10$ and $\|A\|_2=1$. We choose $\sigma=1$. Although $\SumExp$ is not globally Lipschitz smooth, assuming that the iterates stay in a compact set we can run linesearch gradient descent as an instance of LS-anisoPG$^\alpha$ with $\phi=\frac{1}{2}\|\cdot\|^2$. In the exponential case we choose $\lambda_{-1}=1/\|A\|_{\infty}$ as the inverse sharp smoothness constant. In the Euclidean case we choose $\lambda_{-1}=100$. 
Besides the existence of a global smoothness constant $L$ in the exponential case which allows us to terminate the linesearch loop safely at $\lambda_k \leq 1/L$, we highlight that the anisotropic proximal gradient algorithm also overcomes a computational disadvantage of its Euclidean counterpart: Note that for small values of $\sigma$ the gradient of the exponential penalties $\sigma \SumExp((Ax -b)/\sigma)$ can suffer from numerical overflows. This is avoided by the logarithmic preconditioner that wraps the update direction within a component-wise logarithm: Then the components of the update direction take the form of a weighted $\logsumexp$ which can be evaluated robustly even for large values by noting the identity $\logsumexp_{\alpha}(x) = \ln(\sum_{i=1}^n \alpha_i \exp(x_i - \vecmax(x)) + \vecmax(x)$.

\section{Conclusion}
In this paper we have considered dual space nonlinear preconditioning for forward-backward splitting. The algorithm is an extension of dual space preconditioning for gradient descent \cite{maddison2021dual} to the nonconvex and composite setting.
In our case, the method is derived via an anisotropic descent inequality relative to a reference function 
whose inverse gradient takes the role of a nonlinear preconditioner in the proximal gradient scheme.
We develop a calculus for anisotropic smoothness and show invariances which allow one to apply the algorithm to a class problems that are not amenable to the Bregman proximal gradient method.
We prove subsequential convergence of the method using a regularized gap function which vanishes at rate $\mathcal{O}(1/K)$ and we analyze the algorithm's linear convergence under an anisotropic generalization of the proximal PL-inequality \cite{karimi2016linear}.
The method generalizes existing classical algorithms for optimal transport, logistic regression and boosting. This provides a dual view onto the framework of entropic subspace projections that is typically used to derive these algorithms.
We also discuss examples which go beyond these existing methods. In numerical simulations we show substantial improvements of the anisotropic proximal gradient algorithm over its Euclidean counterparts. Our experiments on logistic regression demonstrate that the performance of our methods is almost unaffected by the regularization weight when compared to a recent family of linesearch-free adaptive proximal gradient methods \cite{latafat2023convergence,malitsky2020adaptive} that perform favorably only for larger regularization weight.
An interesting open problem is the complexity of the method in convex case.
More recently, the algorithm has been generalized to Banach spaces \cite{kim2023mirror} and adapted to measure spaces in continuous time \cite{bonet2024mirror}.

\subsection*{Acknowledgements} We would like to thank three anonymous referees for their helpful comments.
%


\appendix

\section{Missing proofs}
\subsection{Proof of \cref{thm:phi_convex_asmooth}} \label{sec:thm:phi_convex_asmooth}
The proof combines and extends the proofs of \cite{laude2021conjugate} for $\phi$ which are possibly not super-coercive. 

\begin{proof}[Proof of \cref{thm:phi_convex_asmooth}]
Without loss of generality we assume that $L=1=\lambda$ by replacing $\phi$ with $\lambda \star \phi$. Choose \begin{align}
\Phi(x,y):=-\lambda \star \phi(x-y),
\end{align}
and recall \cref{thm:inf_conv_rem}

``\labelcref{thm:phi_convex_asmooth:asmooth} $\Rightarrow$ \labelcref{thm:phi_convex_asmooth:infconv}'': By definition $f \in \mathcal{C}^1(\bR^n)$. Let $\bar x \in \bR^n$. Then we have that for any $x$
$$
-f(x) \geq -\phi(x - (\bar x -\nabla \phi^*(\nabla f(\bar x)))) + \phi(\bar x - ( \bar x - \nabla \phi^*(\nabla f(\bar x)))) -f(\bar x),
$$
which means by definition of the $\Phi$-subdifferential that $\bar x - \nabla \phi^*(\nabla f(\bar x)) \in \partial_\Phi (-f)(\bar x)$. Invoking \cref{thm:phi_subgradients} this means that $(-f)(\bar x) = (-f)^{\Phi\Phi}(\bar x)$. Since $\bar x \in \bR^n$ was arbitrary in light of \cref{thm:inf_conv_rem} we have that $f=-(-f)^{\Phi\Phi} = \xi \infconv \phi$ for $\xi:=(-f)^\Phi$.

``\labelcref{thm:phi_convex_asmooth:infconv} $\Rightarrow$ \labelcref{thm:phi_convex_asmooth:asmooth}'': Suppose that $f \in \mathcal{C}^1(\bR^n)$ and $f=\xi \infconv \phi$ for some $\xi:\bR^n \to \exR$. Let $\bar x \in \bR^n$. Since $f \in \mathcal{C}^1(\bR^n)$ it is in particular finite-valued and we have $f(\bar x)=\inf_{y \in \bR^n} \xi(y) + \phi(\bar x - y) \in \bR$.
This means that for any $\varepsilon > 0$ there exists $\bar y_\varepsilon$ such that
\begin{align} \label{eq:eps_inf}
f(\bar x) + \varepsilon \geq \xi(\bar y_\varepsilon) + \phi(\bar x - \bar y_\varepsilon),
\end{align}
while for any $x \in \bR^n$ we have that $f(x) \leq \xi(\bar y_\varepsilon) + \phi(x - \bar y_\varepsilon)$. Combining the identities and eliminating $\xi(\bar y_\varepsilon)$ we obtain that for any $x \in \bR^n$:
\begin{align} \label{eq:ineq_ekeland}
\phi(x - \bar y_\varepsilon)-f(x) + \varepsilon \geq \phi(\bar x - \bar y_\varepsilon) -f(\bar x),
\end{align}
showing that $\bar x \in \epsargmin \{\phi(\cdot - \bar y_\varepsilon)-f \}$. Ekeland’s variational principle with $\delta=\sqrt{\varepsilon}$, see \cite[Proposition 1.43]{RoWe98}, yields the existence of a point $\bar x_\varepsilon \in \overline{\mathrm{B}}(\bar x; \sqrt{\varepsilon})$ with $\phi(\bar x_\varepsilon - \bar y_\varepsilon)-f(\bar x_\varepsilon) \leq \phi(\bar x - \bar y_\varepsilon) -f(\bar x)$ and $\bar x_\varepsilon \in \argmin \{\phi(\cdot - \bar y_\varepsilon)-f + \sqrt{\varepsilon}\|\cdot - \bar x_\varepsilon\|\}$. By Fermat's rule \cite[Theorem 10.1]{RoWe98} and the fact that $\partial \|\cdot\|(0)=\overline{\mathrm{B}}(0; 1)$ we have $0 \in \nabla \phi(\bar x_\varepsilon - \bar y_\varepsilon) -\nabla f(\bar x_\varepsilon) + \overline{\mathrm{B}}(0; \sqrt{\varepsilon})$ and thus
\begin{align} \label{eq:u_eps}
\nabla f(\bar x_\varepsilon) - \nabla \phi(\bar x_\varepsilon - \bar y_\varepsilon) =: u_\varepsilon \in \overline{\mathrm{B}}(0; \sqrt{\varepsilon}).
\end{align}
Since $\ran \nabla \phi = \intr(\dom \phi^*)$ we have that $\nabla f(\bar x_\varepsilon) -u_\varepsilon = \nabla \phi(\bar x_\varepsilon - \bar y_\varepsilon) \in \intr(\dom \phi^*)$.
Passing to $\varepsilon \to 0$, by continuity, $\intr(\dom \phi^*)\ni \nabla f(\bar x_\varepsilon) -u_\varepsilon \to \nabla f(\bar x) \in \intr(\dom \phi^*)$, where the inclusion follows by the constraint qualification $\ran \nabla f \subseteq \ran \nabla \phi = \intr(\dom \phi^*)$.
Since $\nabla\phi^*$ is continuous relative to $\intr(\dom \phi^*)$ we have by rewriting~\cref{eq:u_eps}
$$
\bar y_\varepsilon =\bar x_\varepsilon - \nabla \phi^*(\nabla f(\bar x_\varepsilon) -u_\varepsilon) \to \bar x -  \nabla \phi^*(\nabla h(\bar x)) =: \bar y,
$$ 
for $\varepsilon \to 0$.
Thus, passing to $\varepsilon \to 0$ in \cref{eq:ineq_ekeland} we obtain again by continuity that
$$
\phi(x - \bar x +\nabla \phi^*(\nabla f(\bar x)))-f(x) \geq \phi(\nabla \phi^*(\nabla f(\bar x))) -f(\bar x),
$$
which is the anisotropic descent inequality.

Thanks to the previous part of the proof the function $\xi$ in the expression $f = \xi \infconv \phi$ can be taken as $\xi=(-f)^\Phi$ which is lsc as it is a pointwise supremum over continuous functions. $\xi$ is also proper by properness of $f$.
Recall that $\bar y_{\varepsilon} \to \bar x -  \nabla \phi^*(\nabla f(\bar x)) = \bar y$ for $\varepsilon \to 0$. Then we have:
$$
\inf_{y \in \bR^n} \xi(y) + \phi(\bar x - y) \leq \xi(\bar y) + \phi(\bar x - \bar y) \leq \lim_{\varepsilon \searrow 0} \xi(\bar y_{\varepsilon}) + \phi(\bar x - \bar y_{\varepsilon}) =f(\bar x) = \inf_{y \in \bR^n} \xi(y) + \phi(\bar x - y),
$$
where the second inequality holds since $\xi$ is proper lsc and the second last equality holds thanks to \cref{eq:eps_inf}.

For the remainder of the proof assume that $f \in \Gamma_0(\bR^n)$.

``\labelcref{thm:phi_convex_asmooth:infconv} $\Rightarrow$ \labelcref{thm:phi_convex_asmooth:str_cvx}'': Assume that $f = \xi \infconv \phi$ for some $\xi :\bR^n \to \exR$. Then by definition of the convex conjugate we have that $f^*=\xi^* + \phi^*$ and $\xi^* \in \Gamma_0(\bR^n)$. By assumption we have $\dom \partial f^* =\ran \partial f \subseteq \ran \nabla \phi = \intr \dom \phi^*$ implying that $\emptyset \neq \dom f^* \cap \intr \dom \phi^*$. Since $\dom f^* = \dom \xi^* \cap \dom \phi^*$ we have $\emptyset \neq (\dom \xi^* \cap \dom \phi^*) \cap \intr \dom \phi^* = \dom \xi^* \cap \intr \dom \phi^*$.

``\labelcref{thm:phi_convex_asmooth:str_cvx} $\Rightarrow$ \labelcref{thm:phi_convex_asmooth:infconv}'': Let $f^* = \psi + \phi^*$ for some $\psi \in \Gamma_0(\bR^n)$ such that $\dom \psi \cap \intr \dom\phi^* \neq \emptyset$. Then we can invoke \cite[Proposition 6.19(vii)]{BaCo110} and \cite[Theorem 15.3]{BaCo110} to obtain that $f=f^{**} =(\psi + \phi^*)^*= \psi^* \infconv \phi \in \Gamma_0(\bR^n)$. 
By \cref{thm:moreau_decomposition:smooth} $\psi^* \infconv \phi \in \mathcal{C}^1(\bR^n)$ with $\nabla (\psi^* \infconv \phi) = \nabla \phi \circ (\id - \aprox{\phi}{\psi^*})$. This implies that $\ran \partial (\psi^* \infconv \phi) \subseteq \ran \nabla \phi$.

``\labelcref{thm:phi_convex_asmooth:str_cvx} $\Rightarrow$ \labelcref{thm:phi_convex_asmooth:str_cvx_relint}'': Let $f^* = \psi + \phi^*$ for some $\psi \in \Gamma_0(\bR^n)$ such that $\dom \psi \cap \intr \dom\phi^* \neq \emptyset$. We have that $\dom f^* = \dom \psi \cap \dom \phi^* \subseteq \dom \phi^*$. Choose $x_0 \in \relint \dom f^*$ and $x \in \dom \psi \cap \intr \dom\phi^* \subseteq \cl \dom f^*$. Then we have by the line segment principle \cite[Theorem 6.1]{Roc70} that $x_\tau :=(1-\tau) x_0 + \tau x \in \relint \dom f^*$ for every $\tau \in [0,1)$. Since $x_\tau \to x$ for $\tau \to 1$ and $\intr \dom \phi^*$ is open we have that $x_\tau \in \relint \dom f^* \cap \intr \dom \phi^*$ for $\tau$ close to $1$.
In particular $\psi \equiv f^* \mathbin{\dot{-}} \phi^*$ is finite-valued and convex on $\relint \dom f^* \subseteq \dom f^* \subseteq \dom \phi^*$.

``\labelcref{thm:phi_convex_asmooth:str_cvx_relint} $\Rightarrow$ \labelcref{thm:phi_convex_asmooth:str_cvx}'': Assume that $\dom f^* \subseteq \dom \phi^*$ with $\relint \dom f^* \cap \intr \dom \phi^* \neq \emptyset$ and $f^* \mathbin{\dot{-}} \phi^*$ is convex on $\relint \dom f^*$.
Define $\zeta(x) := f^*(x) - \phi^*(x) \in\bR$ if $x \in \relint \dom f^*$ and $+\infty$ otherwise. Then $\dom \zeta = \relint \dom f^*$. Since $\zeta$ is proper, convex, by \cite[Theorem 7.4]{Roc70} $\cl \zeta \in \Gamma_0(\bR^n)$ with $\zeta \equiv \cl \zeta$ on $\relint \dom f^*$. Next we prove that $f^* \equiv \cl \zeta + \phi^*$ on $\cl \dom f^*$:

For every $x \in \relint \dom f^* \subseteq \dom \phi^*$ we have $(\cl \zeta)(x) + \phi^*(x) = \zeta(x) +\phi^*(x)=f^*(x)$. Now take $x_0 \in \relint \dom f^*\cap \intr \dom \phi^*$ which exists by assumption and any $x \in \cl\dom f^*$.
Then we have by the line segment principle \cite[Theorem 6.1]{Roc70} that $(1-\tau) x_0 + \tau x \in \relint \dom f^*$ for every $\tau \in [0,1)$. In view of \cite[Corollary 7.4.1]{Roc70} $\relint(\dom \cl \zeta) = \relint \dom \zeta = \relint \dom f^*$. Therefore $x_0 \in \relint(\dom \cl \zeta) \cap \intr \dom \phi^*=\relint (\dom \cl \zeta \cap \dom \phi^*) = \relint(\dom (\cl \zeta + \phi^*))$ by \cite[Theorem 6.5]{Roc70} and $\cl \zeta + \phi^* \in \Gamma_0(\bR^n)$ by \cite[Theorem 9.3]{Roc70}. Then we can invoke \cite[Theorem 7.5]{Roc70} and obtain
\begin{align*}
(\cl \zeta)(x)+\phi^*(x)&=\lim_{\tau \nearrow 1} (\cl \zeta)((1-\tau) x_0 + \tau x) + \phi^*((1-\tau) x_0 + \tau x) \\
&= \lim_{\tau \nearrow 1} f^*((1-\tau) x_0 + \tau x) = f^*(x),
\end{align*}
where the second equality holds since $(1-\tau) x_0 + \tau x \in \relint \dom f^*$ and $\cl \zeta + \phi^* \equiv f^*$ on $\relint \dom f^*$, and the last equality again by \cite[Theorem 7.5]{Roc70} since $f^* \in \Gamma_0(\bR^n)$ and $x_0 \in \relint \dom f^*$.
This implies that $\cl \zeta + \phi^* \equiv f^*$ on $\cl \dom f^*$. By \cite[Theorem 7.4]{Roc70} $\cl \zeta$ differs from $\zeta$ at most at relative boundary points of $\dom \zeta = \relint \dom f^*$, i.e., at points $x \in \cl \dom \zeta \setminus \relint \dom \zeta = \cl \dom f^* \setminus \relint \dom f^*$. This implies that $\cl \zeta \equiv \zeta \equiv +\infty$ on $\bR^n \setminus \cl \dom f^*$ and thus $\cl \zeta + \phi^* \equiv f^* \equiv +\infty$ on $\bR^n \setminus \cl \dom f^*$.
Altogether this implies that $\psi + \phi^* = f^*$ for $\psi := \cl \zeta \in \Gamma_0(\bR^n)$.
Finally, since $f^*=\psi + \phi^*$ we have by assumption that $\emptyset \neq \dom f^* \cap \intr \dom \phi^*= (\dom \psi \cap \dom \phi^*) \cap \intr \dom \phi^* = \dom \psi \cap \intr \dom \phi^*$.



For the last equivalence assume that $f,\phi\in\mathcal{C}^2(\bR^n)$ are Legendre with invertible Hessian.

``\labelcref{thm:phi_convex_asmooth:str_cvx_relint} $\Leftrightarrow$ \labelcref{thm:phi_convex_asmooth:cvx_c2}'': Since $f$ is Legendre $\emptyset \neq \intr \dom f^* = \dom \nabla f^*$. In particular, $\dom f^* \subseteq \dom \phi^*$ implies $\ran \nabla f=\dom \nabla f^*\subseteq \dom \nabla \phi^* = \ran \nabla \phi$ and by the line segment principle also the opposite implication holds true and hence the characterization in \labelcref{thm:phi_convex_asmooth:str_cvx_relint} is equivalent to convexity of
\begin{equation*}
f^*-\phi^*
\end{equation*}
on $\intr \dom f^*$. 
Equivalently this means that
\begin{equation} \label{eq:proof_matrix_ineq}
\nabla^2 f^*(x^*) \succeq \nabla^2 \phi^*(x^*)
\end{equation}
for all $x^* \in \intr \dom f^*$. By the inverse function theorem we have that $\nabla^2 \phi(\nabla \phi^*(x^*))^{-1} = \nabla^2 \phi^*(x^*)$ and $\nabla^2 f(\nabla f^*(x^*))^{-1} = \nabla^2 f^*(x^*)$.
Hence, by inversion the matrix inequality \cref{eq:proof_matrix_ineq} is equivalent to 
\begin{equation*}
\nabla^2 f(\nabla f^*(x^*)) \preceq \nabla^2 \phi(\nabla \phi^*(x^*)),
\end{equation*}
for all $x^* \in \intr \dom f^*$.
Since $\ran \nabla f=\intr \dom f^*$ we can introduce a change of variable $\nabla f(x) = x^*$ and thus, since $\nabla f^* \circ \nabla f = \id$ the matrix inequality is equivalent to
\begin{equation*}
\nabla^2 f(x) \preceq \nabla^2 \phi(\nabla \phi^*(\nabla f(x))),
\end{equation*}
for all $x \in \bR^n$, as claimed.

This verifies all equivalences and concludes the proof.
\ifx\ifsvjour\true
\qed
\fi
\end{proof}
\subsection{Proof of \cref{thm:episcaling_asmoothness}} \label{sec:thm_episcaling_asmoothness}
The proof requires the following lemma:
\begin{lemma}
\ifx\ifsvjour\true
 \label[lemma]{thm:scaling_helper_lemma}
\else
 \label{thm:scaling_helper_lemma}
\fi
Let $(y_0, \beta_0)\in \bR^n \times \bR$. Then for any $0 < \lambda_2 < \lambda_1$ and any $\bar x \in \bR^n$ and
$$
y:=\bar x + \lambda_2\tfrac{\bar x - y_0}{\lambda_1}, \qquad \beta:=(\lambda_2-\lambda_1 )\phi(\tfrac{\bar x-y_0}{\lambda_1} ) + \beta_0,
$$
the inequality
\begin{align} \label{eq:scaling_inequality}
\lambda_1 \star \phi(x-y_0) -\beta_0 < \lambda_2 \star \phi(x-y) - \beta,
\end{align}
holds for all $x \neq \bar x$ while it holds with equality at $x=\bar x$.
\end{lemma}
\begin{proof}
Let $\bar x \in \bR^n$. Define $\psi(z):=\phi(z + \tfrac{\bar x - y_0}{\lambda_1} ) - \phi(\frac{\bar x - y_0}{\lambda_1} )$. Then clearly, $\psi(0)=0$ and $\psi$ is strictly convex. Thanks to \cite[Lemma 4.4]{burke2013epi} adapted to strict convexity we have that $\lambda_1 \star \psi(z) < \lambda_2 \star \psi(z)$ for any $z \neq 0$ which reads by expanding the epi-scaling:
\begin{equation*}
\lambda_1 \big(\phi(\tfrac{z + \bar x - y_0}{\lambda_1} ) - \phi(\tfrac{\bar x- y_0}{\lambda_1} )\big) < \lambda_2 \big(\phi(\tfrac{z}{\lambda_2} + \tfrac{\bar x - y_0}{\lambda_1} ) - \phi(\tfrac{\bar x - y_0}{\lambda_1} )\big).
\end{equation*}
Via the change of variable $x :=z+\bar x$ we have for any $x \neq \bar x$:
\begin{equation*}
\lambda_1 \big(\phi(\tfrac{x-y_0}{\lambda_1} ) - \phi(\tfrac{\bar x-y_0}{\lambda_1} )\big) < \lambda_2 \big(\phi(\tfrac{x - \bar x}{\lambda_2} + \tfrac{\bar x - y_0}{\lambda_1} ) - \phi(\tfrac{\bar x-y_0}{\lambda_1} )\big).
\end{equation*}
By definition of the epi-scaling this reads:
\begin{equation*}
\lambda_1 \star \phi(x-y_0) - \lambda_1 \phi(\tfrac{\bar x-y_0}{\lambda_1} ) < \lambda_2 \star \phi(x - \bar x + \lambda_2\tfrac{\bar x - y_0}{\lambda_1} ) - \lambda_2 \phi(\tfrac{\bar x-y_0}{\lambda_1} ).
\end{equation*}
Rearranging and adding $\beta_0$ to both sides of the inequality yields:
\[
\lambda_1 \star \phi(x-y_0) - \beta_0< \lambda_2 \star \phi(x - \bar x + \lambda_2\tfrac{\bar x - y_0}{\lambda_1} ) - (\lambda_2-\lambda_1 )\phi(\tfrac{\bar x-y_0}{\lambda_1} ) - \beta_0. \ifx\ifsvjour\true
\qed
\else
\qedhere
\fi
\]
\end{proof}

\begin{proof}[Proof of \cref{thm:episcaling_asmoothness}]
Let $L_2>L_1 >0$ and assume that $f$ is anisotropically smooth relative to $\phi$ with constant $L_1$. Let $\bar x, x \in \bR^n$ with $\bar x \neq x$. Then we have that
\begin{equation*}
f(x) \leq f(\bar x) + \tfrac{1}{L_1} \star \phi(x - \bar x + \tfrac{1}{L_1} \nabla \phi^*(\nabla f(\bar x))) -  \tfrac{1}{L_1} \phi(\nabla \phi^*(\nabla f(\bar x))).
\end{equation*}
Let $\lambda_1 :=\frac{1}{L_1}$ and $\lambda_2 :=\frac{1}{L_2}$. Choose $y_0:=\bar x - \lambda_1 \nabla \phi^*(\nabla f(\bar x))$ and $\beta_0:= \lambda_1 \phi(\nabla \phi^*(\nabla f(\bar x)))$. Then the claimed result follows from \cref{thm:scaling_helper_lemma} for $y = \bar x + \lambda_2 \nabla \phi^*(\nabla f(\bar x))$ and $\beta:=\lambda_2\phi(\nabla \phi^*(\nabla f(\bar x)))$.

In particular this proves the claimed implication in the second part of the statement which follows alternatively via \cref{thm:transitivity}.
\ifx\ifsvjour\true
\qed
\else
\qedhere
\fi
\end{proof}

\subsection{Proof of \cref{thm:closed_addition}} \label{sec:thm_closed_addition}
We first shall prove the following auxiliary result:
\begin{lemma}
\ifx\ifsvjour\true
 \label[lemma]{thm:helper_closed_addition}
\else
 \label{thm:helper_closed_addition}
\fi
Let $y \in (\bR^n)^m$ for $m \geq 1$ and $\alpha_i \geq 0$ with such that $\sum_{i=1}^m \alpha_i = 1$. Assume that $D_{\phi^*}$ is jointly convex. Then $f=\sum_{i=1}^m \alpha_i \phi(\cdot - y_i)$ is anisotropically smooth.
\end{lemma}
\begin{proof}
We first show by induction over $m$ that $\nabla (\sum_{i=1}^{m} \alpha_i \phi(\cdot - y_i))$ is $\nabla \phi^*$-firmly nonexpansive on $\bR^n$ in the sense of \cite[Definition 4.1]{wang2021bregman} and $\dom  (\sum_{i=1}^{m} \alpha_i \phi(\cdot - y_i))^* = \dom \phi^*$. Then the claim follows by \cite[Lemma 4.2]{wang2021bregman} and \cref{thm:phi_convex_asmooth}. Let $m=2$. We have that $\phi(\cdot - y_i) = (\langle \cdot, y_i \rangle + \phi^*)^*$ for $i \in \{1, 2\}$. Thus we can invoke \cite[Lemma 4.2]{wang2021bregman} and obtain that $\nabla \phi(\cdot - y_i)$ is $\nabla \phi^*$-firmly nonexpansive on $\bR^n$ for $i\in \{1,2\}$. Since $D_{\phi^*}$ is jointly convex, in view of \cite[Remark 4.6]{wang2021bregman} we can invoke \cite[Lemma 4.4]{wang2021bregman} and obtain that $\nabla (\alpha_1 \phi(\cdot - y_1) + \alpha_2 \phi(\cdot - y_2)) = \alpha_1 \nabla \phi(\cdot - y_1)+ (1-\alpha_1) \nabla \phi(\cdot - y_2)$ is $\nabla \phi^*$-firmly nonexpansive on $\bR^n$ as well.
By convexity of $\dom \phi^*$ we have that 
\begin{align*}
\dom (\alpha_1 \phi(\cdot - y) + \alpha_2 \phi(\cdot - y_2))^* &= \dom \alpha_1 \star (\langle \cdot, y_1 \rangle + \phi^*) + \dom (1-\alpha_1) \star (\langle \cdot, y_2 \rangle + \phi^*)  \\
&= \alpha_1 \dom \phi^* + (1-\alpha_1) \dom \phi^* = \dom \phi^*.
\end{align*}
Now let $m>2$. We write
$$
\sum_{i=1}^{m} \alpha_i \phi(\cdot - y_i) = (1 - \alpha_m) \sum_{i=1}^{m-1} \tfrac{\alpha_i}{1 - \alpha_m}  \phi(\cdot - y_i) + \alpha_m \phi(\cdot - y_m) = (1-\alpha_m)\psi + \alpha_m\phi(\cdot - y_m),
$$
for $\psi := \sum_{i=1}^{m-1} \beta_i \phi(\cdot - y_i)$ with $\beta_i :=\tfrac{\alpha_i}{1 - \alpha_m}$.
Note that $\alpha_m = 1 - \sum_{i=1}^{m-1} \alpha_i$ and thus $\sum_{i=1}^{m-1} \beta_i = 1$.
By the induction hypothesis we have that $\nabla \psi = \nabla (\sum_{i=1}^{m-1} \beta_i \phi(\cdot - y_i))$ is $\nabla \phi^*$-firmly nonexpansive on $\bR^n$ and $\dom \psi^* = \dom \phi^*$. 
Using the same arguments as above we obtain that $\nabla (\sum_{i=1}^{m} \alpha_i \phi(\cdot - y_i)) = (1 - \alpha_m) \nabla \psi + \alpha_m \nabla \phi(\cdot - y_m)$ is $\nabla \phi^*$-firmly nonexpansive on $\bR^n$ and
$
\dom (\textstyle\sum_{i=1}^{m} \alpha_i \phi(\cdot - y_i))^* 
= \dom \phi^*.
$
Thanks to \cite[Lemma 4.2]{wang2021bregman} and \cref{thm:phi_convex_asmooth} $\sum_{i=1}^{m} \alpha_i \phi(\cdot - y_i)$ is anisotropically smooth relative to $\phi$ as claimed.
\ifx\ifsvjour\true
\qed
\fi
\end{proof}
\begin{proof}[Proof of \cref{thm:closed_addition}]
Let $L:=\max_{1 \leq i \leq m} L_i$. Thanks to \cref{thm:episcaling_asmoothness} for any $1 \leq i \leq m$ the function $f_i$ is anisotropically smooth relative to $\phi$ with constant $L$. Invoking \cref{thm:phi_convex_asmooth} we have $f_i(x) = \inf_{y \in \bR^n} L^{-1} \star \phi(x - y) + \xi_i(y)$ for some $\xi_i :\bR^n \to \exR$.

Without loss of generality assume that $L=1$.
Let $\alpha_i \geq 0$, $\sum_{i=1}^m \alpha_i = 1$ and $x\in \bR^n$. Then we have
\begin{align}
\sum_{i=1}^m \alpha_i f_i(x) &= \sum_{i=1}^m \inf_{y \in \bR^n} \alpha_i \phi(x - y) + \alpha_i \xi_i(y) \notag \\
&=\inf_{y \in (\bR^n)^m} \sum_{i=1}^m \alpha_i \phi(x - y_i) + \sum_{i=1}^m \alpha_i \xi_i(y_i) \label{eq:inf_conv_expr_average_0}.
\end{align}
Let $y \in (\bR^n)^m$. In light of \cref{thm:helper_closed_addition} $\sum_{i=1}^m\alpha_i \phi(\cdot - y_i)$ is anisotropically smooth relative to $\phi$. Hence we have by \cref{thm:phi_convex_asmooth} that
\begin{align}
\sum_{i=1}^m\alpha_i \phi(x - y_i) = \inf_{v \in \bR^n} \phi(x - v) + \rho(v;y), \label{eq:inf_conv_expr_average}
\end{align}
for some $\rho(\cdot; y) :\bR^n \to \exR$.
Substituting \cref{eq:inf_conv_expr_average} in \cref{eq:inf_conv_expr_average_0} an interchange of the order of minimization yields:
\begin{align*}
\sum_{i=1}^m \alpha_i f_i(x) &= \inf_{y \in (\bR^n)^m} \sum_{i=1}^m \alpha_i \phi(x - y_i) + \sum_{i=1}^m \alpha_i \xi_i(y_i) \\
&= \inf_{y \in (\bR^n)^m} \inf_{v \in \bR^n} \phi(x - v) + \rho(v;y) + \sum_{i=1}^m\alpha_i \xi_i(y_i) \\
&=\inf_{v \in \bR^n} \phi(x - v) + \inf_{y \in (\bR^n)^m} \rho(v;y) + \sum_{i=1}^m\alpha_i \xi_i(y_i) \\
&= \inf_{v \in \bR^n} \phi(x - v) + \zeta(v),
\end{align*}
where
$$
\zeta(v) := \inf_{y \in (\bR^n)^m} \rho(v;y) +\sum_{i=1}^m\alpha_i \xi_i(y_i).
$$
By convexity of $\ran \nabla \phi= \intr\dom \phi^*$ we have that 
\[
\ran \nabla (\textstyle\sum_{i=1}^m \alpha_i f_i) \subseteq \alpha_1\ran \nabla f_1 + \cdots + \alpha_m\ran \nabla f_m \subseteq \ran \nabla \phi.
\]
Then, in view of \cref{thm:phi_convex_asmooth}, $\sum_{i=1}^m \alpha_i f_i$ is anisotropically smooth relative to $\phi$.
\ifx\ifsvjour\true
\qed
\fi
\end{proof}

\subsection{Proof of \cref{ex:logistic}} \label{sec:ex_logistic}
\begin{proof}
To show the claimed result we first show that $f_i:=\ell(\langle a_i,\cdot \rangle)$ is anisotropically smooth relative to $\phi$ with constant $L$. The claimed result then follows by verifying joint convexity of $D_{\phi^*}$ and invoking \cref{thm:closed_addition}.
Anisotropic smoothness of $f_i$ is proved via \cref{thm:phi_convex_asmooth} by verifying that $\dom f_i^* \subseteq \dom \phi^*$ such that $\emptyset \neq \intr \dom \phi^* \cap \relint \dom f_i^*$ and $f_i^* \mathbin{\dot{-}} \tfrac{1}{L} \phi^*$ is convex on the relative interior of $\dom f_i^*$.

We have $h^*(t)=(t + 1)\ln((t + 1)/2) + (1-t)\ln((1-t)/2)$ with $\dom h^*=[-1, 1]$ where $h^*(t)=0$ for $t\in\{-1,1\}$.
Owing to \cite[Theorem 11.23(b)]{RoWe98} the conjugate $f_i^*=(\ell \circ a_i^\top)^*$ amounts to the infimal postcomposition $a_i \ell^*$ of $\ell^*(t) = t\ln t + (1-t)\ln(1-t)$ by $a_i$ with $\dom \ell^*=[0,1]$ and $\ell^*(t)=0$ for $t\in\{0,1\}$ and is given as follows:
\begin{align*}
(a_i \ell^*)(x)= \inf \{\ell^*(t) \mid t \in \bR : a_i t = x\} =  \begin{cases} \ell^*(t) & \text{if $a_i t = x$ for some $t\in [0, 1]$,} \\
+ \infty & \text{otherwise}.
\end{cases}
\end{align*}
Then we have that $\dom a_i \ell^* = \{a_i t \mid t \in [0, 1]\} \subseteq \dom \phi^*$ and $\tfrac{1}{2} a_i \in  \intr \dom \phi^* \cap \relint (\dom a_i \ell^*)$.
It remains to verify convexity of $a_i \ell^* \mathbin{\dot{-}} \tfrac{1}{L} \phi^*$ on the relative interior of $\dom a_i \ell^*$:
Equivalently this means that the one-dimensional function $\ell^* \mathbin{\dot{-}} \tfrac{1}{L} \phi^* \circ a_i$ is convex on $(0, 1)$. Since $(h^*)''(t)=2/(1-t^2)$ and $(\ell^*)''(t)=\tfrac{1}{t-t^2}>0$ on $(0,1)$ this is implied by
$$
L\geq \tfrac{a_i^\top \nabla^2 \phi^*(a_i t) a_i}{(\ell^*)''(t)} = a_i^\top \diag (2(t-t^2)/(1-a_i^2 t^2) )a_i =\sum_{j=1}^m \tfrac{2(t-t^2) a_{ij}^2}{1-a_{ij}^2 t^2},
$$
for all $t \in (0,1)$. Clearly, $\tfrac{2(t-t^2) a_{ij}^2}{1-a_{ij}^2 t^2} \leq \tfrac{2(t-t^2) a_{ij}^2}{1-t^2}$ and $\frac{2(t-t^2) a_{ij}^2}{1-t^2}$ is monotonically increasing on $[0,1)$. For $t \to 1$ the expression is indefinite. Thus we apply L'Hospital's rule and obtain:
$$
\lim_{t\to 1} \frac{2(t-t^2) a_{ij}^2}{1-t^2} = \lim_{t\to 1} \frac{2 a_{ij}^2 (1-2t)}{-2t}= \lim_{t\to 1} \frac{(2t-1) a_{ij}^2}{t} = a_{ij}^2,
$$
and thus we have for $t \in (0,1)$
$$
\frac{(\phi^* \circ a_i)''(t) }{(\ell^*)''(t)} =\sum_{j=1}^m \frac{2(t-t^2) a_{ij}^2}{1-a_{ij}^2 t^2} \leq \sum_{j=1}^m \frac{2(t-t^2) a_{ij}^2}{1-t^2} \leq  \sum_{j=1}^m a_{ij}^2= \|a_i\|^2 =:L_i.
$$
In light of \cref{thm:phi_convex_asmooth} $f_i$ is anisotropically smooth relative to $\frac{1}{L_i} \star \phi$. Since $1/(h^*)''(t)=(1-t^2)/2$ is concave by \cite[Theorem 3.3(i)]{bauschke2001joint} $D_{\phi^*}$ is jointly convex. Invoking \cref{thm:closed_addition} we obtain that $f=\frac{1}{m}\sum_{i=1}^m f_i$ is anisotropically smooth relative to $\frac{1}{L} \star \phi$ with $L = \max_{1\leq i \leq m} \|a_i\|^2$.
\ifx\ifsvjour\true
\qed
\fi
\end{proof}

\subsection{Proof of \cref{thm:closed_addition_exp}} \label{sec:thm_closed_addition_exp}
\begin{proof}
First note that $f \equiv 0$ is not exponentially smooth since $\dom f^* =\{0\}\not\subseteq  \intr(\dom \SumExp^*)$.
Let $L:=\max_{1 \leq i \leq m} L_i$. Thanks to \cref{thm:episcaling_asmoothness} $f_i$ is exponentially smooth with constant $L$ for any $1 \leq i \leq m$. 
Thanks to \cref{thm:phi_convex_asmooth} we have $f_i(x) = \inf_{y \in \bR^n} L^{-1}\star \SumExp(x - y) + \xi_i(y)$ for some $\xi_i :\bR^n \to \exR$.
Without loss of generality assume that $L=1$.
Let $x\in \bR^n$. Then we have
\begin{align}
\sum_{i=1}^m \alpha_i f_i(x) &= \sum_{i=1}^m \inf_{y \in \bR^n} \alpha_i \SumExp(x - y) + \alpha_i \xi_i(y) \notag \\
&=\inf_{y \in (\bR^n)^m} \sum_{i=1}^m \alpha_i \SumExp(x - y^i) + \sum_{i=1}^m \alpha_i \xi_i(y^i) \label{eq:exp_smoothness_eq}.
\end{align}
Let $y \in (\bR^n)^m$. It holds that
\begin{align} \label{eq:inf_conv_indicator}
\sum_{i=1}^m \alpha_i \SumExp(x - y^i) &= \sum_{i=1}^m \sum_{j=1}^n \alpha_i \exp(x_j - (y^i)_j) \notag \\
&= \sum_{j=1}^n\exp(x_j) \sum_{i=1}^m \alpha_i \exp(-(y^i)_j) \notag \\
&=\sum_{j=1}^n\exp(x_j + \ln(\textstyle \sum_{i=1}^m \alpha_i \exp(-(y^i)_j))),
\end{align}
where the last equality holds since $\sum_{i=1}^m \alpha_i >0$ and thus $\sum_{i=1}^m \alpha_i \exp(-(y^i)_j) > 0$. We define $s: (\bR^n)^m \to \bR^n$ as $s_j(y):= -\ln(\sum_{i=1}^m \alpha_i \exp(-(y^i)_j))$ for all $1 \leq j \leq n$.
Using \cref{eq:inf_conv_indicator} we can further rewrite $\sum_{i=1}^m \alpha_i \SumExp(x - y^i)$ as
\begin{align*}
\sum_{i=1}^m \alpha_i \SumExp(x - y^i) &=\sum_{j=1}^n\exp(x_j + \ln \textstyle \sum_{i=1}^m \alpha_i \exp(-(y^i)_j)) \\
&= \inf_{v \in \bR^n} \SumExp(x-v) + \delta_{\{0\}}(v-s(y)).
\end{align*}
An interchange of the order of minimization leads to:
\begin{align*}
\sum_{i=1}^m \alpha_i f_i(x) &=\inf_{y \in (\bR^n)^m} \sum_{i=1}^m \alpha_i \SumExp(x - y^i) + \sum_{i=1}^m \alpha_i \xi_i(y^i) \\
&=   \inf_{y \in (\bR^n)^m} \inf_{v \in \bR^n} \SumExp(x-v) + \delta_{\{0\}}(v-s(y)) +\sum_{i=1}^m \alpha_i \xi_i(y^i) \\
&= \inf_{v \in \bR^n}\SumExp(x-v) + \inf_{y \in (\bR^n)^m} \delta_{\{0\}}(v-s(y)) + \sum_{i=1}^m \alpha_i \xi_i(y^i) \\
&= \inf_{v \in \bR^n} \SumExp(x - v) + \zeta(v),
\end{align*}
where $\zeta(v) := \inf_{y \in (\bR^n)^m} \delta_{\{0\}}(v-s(y)) + \sum_{i=1}^m\alpha_i \xi_i(y^i)$. 
By assumption we have that $\ran \nabla f_i \subseteq \bR_{++}^n$and since $\sum_{i=1}^m \alpha_i >0$ we have $\ran (\sum_{i=1}^m \alpha_i \nabla f_i) \subseteq \bR_{++}^n = \ran \Exp$. Again thanks to \cref{thm:phi_convex_asmooth} $\sum_{i=1}^m \alpha_i f_i$ is exponentially smooth as claimed.
\ifx\ifsvjour\true
\qed
\fi
\end{proof}

\subsection{Proof of \cref{thm:exponential_smoothness_penalty}} \label{sec:thm_exponential_smoothness_penalty}
We first require a lemma:
\begin{lemma}
\ifx\ifsvjour\true
 \label[lemma]{thm:exponential_smoothness}
\else
  \label{thm:exponential_smoothness}
\fi
Let $A \in \bR_+^{m \times n}$ such that each column contains at least one nonzero element. Then $f(x):= \SumExp(Ax)$ is exponentially smooth with constant $L = \|A\|_{\infty}$.
\end{lemma}
\begin{proof}
In the following we identify the rows of the matrix $A\in \bR_{+}^{m \times n}$ as vectors $a_i$ and for now assume that $a_i \in \bR_{++}^n$.
Then $f(x)=\SumExp(Ax)=\sum_{i=1}^m f_i(x)$ for $f_i(x)=\exp(\langle a_i, x \rangle)$.
We show that $f_i$ is anisotropically smooth relative to $\phi$ with constant $L$ and then invoke \cref{thm:closed_addition_exp}. 
Anisotropic smoothness of $f_i$ is proved via \cref{thm:phi_convex_asmooth} by verifying that $\dom f_i^* \subseteq \dom \phi^*$ such that $\emptyset \neq \intr \dom \phi^* \cap \relint \dom f_i^*$ and $f_i^* \mathbin{\dot{-}} \tfrac{1}{L} \phi^*$ is convex on the relative interior of $\dom f_i^*$.

Owing to \cite[Theorem 11.23(b)]{RoWe98} the conjugate $(\exp \circ a_i^\top)^*$ amounts to the infimal postcomposition $a_i h$ of the von Neumann entropy $h(t):=t \ln(t) - t$ by $a_i$ and is given as
\begin{align*}
(a_i h)(x)= \inf \{h(t) \mid t \in \bR : a_i t = x\} =  \begin{cases} h(t) & \text{if $a_i t = x$ for some $t\in \bR_{+}$,} \\
+ \infty & \text{otherwise}.
\end{cases}
\end{align*}
Denote by $H(x):= \sum_{j=1}^n h(x_j)$ the von Neumann entropy on $\bR^n$.
We have $\dom a_i h = \{a_i t \mid t \in \bR_{+}\} \subseteq \dom H$ and since $a_i \in \bR_{++}^n$ we have $\relint \dom (a_i h) \cap \intr \dom \phi^* \neq \emptyset$.
It remains to verify convexity of $a_i h \mathbin{\dot{-}} \frac{1}{L} H$ on $\relint \dom (a_i h)$. Equivalently this means that the one-dimensional function $h \mathbin{\dot{-}} \frac{1}{L} H \circ a_i$ is convex on $\bR_{++}$. This is implied by
$$
h''(t) - \tfrac{1}{L} a_i^\top \nabla^2 H(a_i t) a_i= h''(t) - \tfrac{1}{L} a_i^\top \nabla^2 H(a_i t) a_i = \tfrac{1}{t} - \tfrac{1}{L} \|a_i\|_1\tfrac{1}{t} = \tfrac{1}{t}(1 -\|a_i\|_1/L) \geq 0,
$$
for all $t > 0$. This is guaranteed by the choice of $L= \max_{1 \leq i \leq m} \|a_i\|_1$.
In light of \cref{thm:phi_convex_asmooth} $f_i$ is anisotropically smooth relative to $\frac{1}{L} \star \phi$. Applying \cref{thm:closed_addition_exp} we obtain the claimed result.

Next we are going to relax the assumption $A \in \bR_{++}^{m\times n}$ by $A \in \bR_{+}^{m \times n}$ as long as each column contains at least one nonzero element.
For that purpose we add $\varepsilon$ to the entries in $A$. We denote the perturbed matrix as $A_\varepsilon \in \bR_{++}^{m\times n}$ and define $f_\varepsilon(x):=\SumExp(A_\varepsilon x -b)$. Then $\nabla f_\varepsilon(x) = A_\varepsilon^\top \Exp(A_\varepsilon x -b) \in \bR_{++}^n$ and by the structure of $A$, $A^\top \Exp(A x -b) \in \bR_{++}^n$ as well.
Thanks to the previous part of the proof the descent inequality is valid for $f_\varepsilon$ and any $x,\bar x\in \bR^n$ fixed. Passing $\varepsilon \searrow 0$, by continuity, the descent inequality is also valid for $\varepsilon=0$.
\ifx\ifsvjour\true
\qed
\fi
\end{proof}
\begin{proof}[Proof of \cref{thm:exponential_smoothness_penalty}]
In light of \cref{thm:phi_convex_asmooth,thm:translation_invariance}, $h(\cdot - b) = \xi \infconv \frac{1}{L}\star\SumExp$ for some function $\xi$.
We have by definition of the infimal convolution
    \begin{align} \label{eq:infconv_f}
        h(Ax-b) =(\xi \infconv \SumExp)(Ax) = \inf_{y \in \bR^m} \tfrac{1}{L} \star \SumExp(Ax - y) + \xi(y).
    \end{align}
    Let $y \in \bR^m$. In light of \cref{thm:exponential_smoothness} and the invariance under positive scaling \cref{thm:closed_addition_exp}, $x \mapsto \frac{1}{L} \star \SumExp(Ax - y)=\frac{1}{L} \SumExp(LAx - Ly)$ is exponentially smooth with constant $M=L\|A\|_{\infty}$. Again invoking \cref{thm:phi_convex_asmooth} there is a function $\omega(\cdot; y)$ such that
    $$
    \tfrac{1}{L}\star \SumExp(Ax  - y) = \inf_{z \in \bR^m} \tfrac{1}{M}\star\SumExp(x - z) + \omega(z; y).
    $$
    Substituting this identity in \cref{eq:infconv_f} we obtain via an interchange of the order of minimization
        \begin{align*}
        h(Ax-b) &= \inf_{y \in \bR^m} \tfrac{1}{L} \star \SumExp(Ax - y) + \xi(y) \\
        &=\inf_{y,z \in \bR^m} \tfrac{1}{M}\star\SumExp(x - z) + \omega(z; y) + \xi(y) \\
        &= \inf_{z \in \bR^m} \tfrac{1}{M}\star\SumExp(x - z) + \inf_{y \in \bR^m} \omega(z; y) + \xi(y) \\
        &= \inf_{z \in \bR^m} \tfrac{1}{M}\star\SumExp(x - z) + \theta(z),
    \end{align*}
    for $\theta(z):= \inf_{y \in \bR^m} \omega(z; y) + \xi(y)$. 
    Since $h(\cdot-b)$ is exponentially smooth we have that $\ran \nabla h(\cdot-b) \subseteq \bR_{++}^n$.  Since $A\in\bR_+^{m \times n}$ such that each column contains at least one nonzero element, we have that $\nabla (h(\cdot -b) \circ A)= A^\top \nabla h(Ax-b) = A^\top \nabla h(Ax-b) \in \bR^n_{++}$, and hence $f=h(\cdot -b) \circ A$ is exponentially smooth with constant $M=L\|A\|_{\infty}$.
\ifx\ifsvjour\true
\qed
\fi
\end{proof}

\subsection{Proof of \cref{ex:polylog}} \label{sec:ex_polylog}
\begin{proof}
Thanks to the fundamental theorem of calculus the derivative amounts to $\vartheta'(t) = \ln(1+\exp(t))$ and hence $\ran \vartheta' = (0,\infty)$ and $\vartheta' \in \cC^1$ implying that $\vartheta \in \cC^2$. Since $\vartheta'$ is monotonically increasing $\vartheta$ is convex.
We compute $(\vartheta^*)'(t^*)=(\vartheta')^{-1}(t^*)=\ln(\exp(t^*)-1)$ and $(\vartheta^*)''(t^*)=\exp(t^*)/(\exp(t^*)-1)$. Denote by $h(t^*)= t^*\ln(t^*)-t^*$ the von-Neumann entropy.
Since $\exp(t^*) > 1$ for $t^* \in (0,+\infty)$ and $1 + \exp(t^*)(t^*- 1)>0$ for all $t^* \in (0,\infty)$ we have that $((\vartheta^*)''-h'')(t^*)=(1 + \exp(t^*)(t^*- 1)) / ((\exp(t^*) -1) t^*) > 0$ for any $t^* \in (0, \infty)$. Hence $\vartheta^* - h$ is convex on $(0,\infty)$. Invoking \cref{thm:phi_convex_asmooth} we obtain that $\vartheta$ is exponentially smooth with constant $1$. Due to separability, by \cref{thm:separable}, $\Theta$ is exponentially smooth with constant $1$. 
To prove the final claim note that 
$
\ln(1+\exp(\tau)) - \ln(1+\exp(-\tau)) 
=\tau,
$
and hence
$
\vartheta(t)+\vartheta(-t) 
=\tfrac{1}{2}t^2. 
$
\ifx\ifsvjour\true
\qed
\fi
\end{proof}

\subsection{Proof of \cref{thm:strong_implies_pg_dominance}} \label{sec:thm_strong_implies_pg_dominance}
First we shall prove the following monotonicity property of the gap function $\gap{\phi}{F}(\cdot, \lambda)$ under epi-scaling adapting the proof of \cite[Lemma 1]{karimi2016linear} to the anisotropic case:
\begin{lemma}[monotonicity of the gap function under epi-scaling]
\ifx\ifsvjour\true
 \label[lemma]{thm:scaling_property}
\else
 \label{thm:scaling_property}
\fi
Let $g \in \Gamma_0(\bR^n)$. For any $0<\lambda_2 \leq \lambda_1$ we have that
$$
\gap{\phi}{F}(\bar x, \lambda_2) \geq  \gap{\phi}{F}(\bar x, \lambda_1).
$$
\end{lemma}
Now we are ready to prove that anisotropic strong convexity of $f$ implies anisotropic proximal gradient dominance of $F=f+g$:
\begin{proof}
Without loss of generality let $\bar x \in \dom g$. We rewrite 
\begin{align}
\gap{\phi}{F}(\bar x, \lambda) &= \tfrac{1}{\lambda}\big(F(\bar x) - F_{\lambda}(\bar x) \big) \notag \\
&= \phi(\nabla \phi^*(\nabla f(\bar x))) -\min_{x \in \bR^n} \left\{ \phi(\lambda^{-1}(x - \bar x) + \nabla \phi^*(\nabla f(\bar x))) + \lambda^{-1}(g(x) - g(\bar x)) \right\} \notag \\
&= \phi(\nabla \phi^*(\nabla f(\bar x))) -\min_{u \in \bR^n} \left\{ \phi(u + \nabla \phi^*(\nabla f(\bar x)))  + \lambda^{-1}(g(\lambda u +\bar x) - g(\bar x)) \right\} \label{eq:subst_scaling_1}\\
&= \phi(\nabla \phi^*(\nabla f(\bar x)))-\min_{u \in \bR^n} \left\{ \phi(u + \nabla \phi^*(\nabla f(\bar x))) + \lambda^{-1}\star \zeta(u) \right\}\label{eq:subst_scaling_2},
\end{align}
where \cref{eq:subst_scaling_1} follows by the change of variable $u := \lambda^{-1}(x - \bar x)$ and \cref{eq:subst_scaling_2} follows by defining $\zeta:=g(\cdot + \bar x) - g(\bar x)$. Thus $\gap{\phi}{F}(\bar x, \lambda_2) \geq \gap{\phi}{F}(\bar x, \lambda_1)$ is implied by
$\tfrac{1}{\lambda_2}\star \zeta(u) \leq \tfrac{1}{\lambda_1}\star \zeta(u)$ for all $u \in \bR^n$,
which holds by convexity of $\zeta$ and $\zeta(0) = 0$ thanks to \cite[Lemma 4.4]{burke2013epi}; also see the proof of \cite[Lemma 1]{karimi2016linear}.
%
\ifx\ifsvjour\true
\qed
\fi
\end{proof}
\begin{proof}[Proof of \cref{thm:strong_implies_pg_dominance}]
Since $f$ is anisotropically strongly convex we have $\ran \nabla f \supseteq \ran \nabla \phi$. By anisotropic smoothness of $f$ we also have $\ran \nabla f \subseteq \ran \nabla \phi$. Combining the inclusions we have $\ran \nabla f = \ran \nabla \phi=\intr\dom \phi^*$. Let $\bar x \in \bR^n$. 
The anisotropic strong convexity inequality yields
\begin{align*}
f(x) &\geq f(\bar x) + \tfrac1 \mu \star \phi(x-\bar x + \mu^{-1}  \nabla\phi^*(\nabla f(\bar x))) - \tfrac1 \mu \star \phi(\mu^{-1} \nabla\phi^*(\nabla f(\bar x))) \\
&> f(\bar x) + \langle \nabla f(\bar x), x - \bar x\rangle,
\end{align*}
for all $x \in \bR^n$ with $x\neq \bar x$, where the second inequality follows due to strict convexity of $\phi$. Thus $f$ is strictly convex and therefore $F=f+g$ is strictly convex relative to $\dom F = \dom g$.
Thanks to \cref{thm:moreau_decomposition:decomp}
$$
\xi_{\bar x}(x) := f(\bar x) + \tfrac1 \mu \star \phi(x-\bar x + \mu^{-1} \nabla\phi^*(\nabla f(\bar x))) - \tfrac1 \mu \star \phi(\mu^{-1}\nabla\phi^*(\nabla f(\bar x))) + g(x),
$$
has the unique minimizer $\argmin_{x \in \bR^n} \xi_{\bar x}(x) = \aprox[\mu]{\phi_-}{g}(\bar x - \mu^{-1} \nabla\phi^*(\nabla f(\bar x)))$. Invoking \cite[Theorem 11.8(b)]{RoWe98} we know that $0 \in \dom \partial (\xi_{\bar x})^*$. Since $\xi_{\bar x}$ is strictly convex relative to $\dom g$ we have that $(\xi_{\bar x})^*$ is essentially smooth implying via \cite[Theorem 26.1]{Roc70} that $0 \in \dom \partial (\xi_{\bar x})^*=\intr(\dom (\xi_{\bar x})^*)$. In view of \cite[Theorem 11.8(c)]{RoWe98} this means that $\xi_{\bar x}$ is coercive. Adding $g(x)$ to both sides of the anisotropic strong convexity inequality yields $\xi_{\bar x} \leq F$ and hence the cost $F$ is coercive too. This implies via strict convexity of $F$ relative to its domain that $F$ has a unique minimizer $x^\star$. 
By minimizing both sides in $F(x) \geq \xi_{\bar x}(x)$ wrt $x$ we obtain
\begin{align*}
F(x^\star)&\geq \inf_{x \in \bR^n} \tfrac1 \mu \star \phi(x-\bar x + \mu^{-1}  \nabla\phi^*(\nabla f(\bar x))) - \tfrac1 \mu \star \phi(\mu^{-1} \nabla\phi^*(\nabla f(\bar x))) + g(x)\\
& = F_{\mu^{-1}}(\bar x).
\end{align*}
This implies that $\mu^{-1}\gap{\phi}{F}(\bar x, \mu^{-1}) = F(\bar x) - F_{\mu^{-1}}(\bar x) \geq F(\bar x) - F(x^\star)$.
Invoking \cref{thm:scaling_property} we have $\mu(F(\bar x) -F(x^\star)) \leq \gap{\phi}{F}(\bar x, \mu^{-1}) \leq \gap{\phi}{F}(\bar x, \lambda)$,
for any $\lambda \leq \mu^{-1}$.
\ifx\ifsvjour\true
\qed
\fi
\end{proof}

\subsection{Proof of \cref{thm:conjugate_duality_astrong}} \label{sec:thm:conjugate_duality_astrong}
The proof is an adaption of the proof of \cite[Theorem 4.3]{laude2021conjugate} for $\phi$ which are possibly not super-coercive. 
\begin{proof}[Proof of \cref{thm:conjugate_duality_astrong}]
Without loss of generality assume that $\mu = 1$ by replacing $\phi$ with $\mu^{-1} \star \phi$. Choose $\Phi(x,y) = + \mu \star \phi(x-y)$. 

``\labelcref{thm:conjugate_duality_astrong:bsmooth} $\Rightarrow$ \labelcref{thm:conjugate_duality_astrong:astrong}'': $\dom f^* \supseteq \dom \phi^*$ implies that $\intr \dom f^* \supseteq \intr \dom \phi^*$ and thus $\ran \partial f=\dom \partial f^* \supseteq \intr \dom f^* \supseteq \intr \dom \phi^*=\ran \nabla \phi$. In particular $\phi^* + (- f^*)$ is finite-valued and convex on $\intr \dom \phi^*$. This implies that $-f^*$ is finite-valued on $\intr \dom \phi^*$. Since $\phi^*$ is smooth on $\intr \dom \phi^*$ we can invoke \cite[Exercise 8.20(b)]{RoWe98} to show that $-f^*$ is regular on $\intr \dom \phi^*$. Invoking \cite[Exercise 8.8(c)]{RoWe98} we obtain that $\partial(\phi^* +(- f^*)) =\widehat \partial(\phi^* +(- f^*)) = \nabla \phi^* + \widehat \partial (-f^*)$ on $\intr \dom \phi$. Since, in addition, $f^* \in \Gamma_0(\bR^n)$, both, $\pm f^*$ are regular with $\widehat\partial (\pm f^*)$ nonempty on $\intr \dom \phi^*$.
Invoking \cite[Theorem 9.18]{RoWe98} this means that $f^*$ is smooth on $\intr \dom \phi^*$ and as such so is $g:= \phi^* \mathbin{\dot{-}} f^*$.                   
Fix $(\bar x, \bar v) \in \gph \partial f \cap (\bR^n \times \intr \dom \phi^*)$ and let $\bar y:= \bar x - \nabla \phi^*(\bar v)$. Then $\nabla \phi(\bar x - \bar y) = \bar v \in \partial f(\bar x)$ and since $f \in \Gamma_0(\bR^n)$ using convex conjugacy we have $\bar x = \nabla f^*(\nabla \phi(\bar x - \bar y))$. Since $f^*\equiv \phi^* \mathbin{\dot{-}} g$ on $\dom \phi^*$, $\dom \phi^* \supseteq \dom (\phi^* \mathbin{\dot{-}} g^{**})$ and $g^{**} \leq g$
we have
\begin{align}
f(x) &= f^{**}(x) = \sup_{v \in \bR^n} \langle v, x \rangle - f^*(v) \notag \\
&\geq \sup_{v \in \dom \phi^*} \langle v, x \rangle - f^*(v) = \sup_{v \in \dom \phi^*} \langle v, x \rangle+g(v)-\phi^*(v)\geq \sup_{v \in \dom \phi^*} \langle v, x\rangle -\phi^*(v) + g^{**}(v) \notag\\
&\geq \sup_{v \in \dom (\phi^* \mathbin{\dot{-}} g^{**})} \langle v, x\rangle -(\phi^* \mathbin{\dot{-}} g^{**})(v) = \sup_{v \in \bR^n} \langle v, x \rangle - (\phi^* \mathbin{\dot{-}} g^{**})(v) =(\phi^* \mathbin{\dot{-}} g^{**})^*(x) \notag \\
&= \sup_{y \in \bR^n} \phi(x-y) \mathbin{\text{\d{\ensuremath{-}}}} g^*_-(y)\label{eq:hirriart_urruty}  \\
&\geq \phi(x-\bar y) \mathbin{\text{\d{\ensuremath{-}}}} g^*_-(\bar y) \label{eq:inequality_f},
\end{align}
where \cref{eq:hirriart_urruty} follows by \cite[Theorem 7.1]{cabot2017envelopes}.
Smoothness of $f^*$ and $g$ on $\intr \dom \phi^*$ yields:
\begin{align*}
\bar x = \nabla f^*(\nabla \phi(\bar x - \bar y)) &= \nabla (\phi^* - g)(\nabla \phi(\bar x - \bar y)) =\bar x - \bar y - \nabla g(\nabla \phi(\bar x - \bar y)),
\end{align*}
and therefore $\nabla g(\nabla \phi(\bar x - \bar y)) = - \bar y$. Recall that $\nabla \phi(\bar x - \bar y) = \bar v \in \partial f(\bar x)$. Then we have thanks to convex conjugacy
\begin{align*}
f(\bar x) &= \langle \nabla \phi(\bar x - \bar y), \bar x \rangle - f^*(\nabla \phi(\bar x - \bar y)) = \langle \nabla \phi(\bar x - \bar y), \bar x \rangle - \phi^*(\nabla \phi(\bar x - \bar y)) + g(\nabla \phi(\bar x - \bar y)) \\
&= \langle \nabla \phi(\bar x - \bar y), \bar x - \bar y \rangle - \phi^*(\nabla \phi(\bar x - \bar y)) - \big( \langle \nabla \phi(\bar x - \bar y), - \bar y \rangle -g(\nabla \phi(\bar x - \bar y)) \big) \\
&= \phi(\bar x - \bar y) - g^*_-(\bar y).
\end{align*}
By combining this result with \cref{eq:inequality_f} we obtain
\[
f(x) \geq f(\bar x) + \phi(x-\bar y) -\phi(\bar x - \bar y) = f(\bar x) + \phi(x-\bar x + \nabla \phi^*(\bar v)) -\phi(\nabla \phi^*(\bar v)). 
\]

``\labelcref{thm:conjugate_duality_astrong:astrong} $\Rightarrow$ \labelcref{thm:conjugate_duality_astrong:bsmooth}'': Let $\bar v \in \intr \dom \phi^* = \ran \nabla \phi$. 
Thanks to the constraint qualification we have $\bar v \in \ran \partial f$. Thus there exists $\bar x \in \dom \partial f$ such that $\bar v \in \partial f(\bar x)$ and the subgradient inequality \cref{eq:a_strongly_mu1} holds true for $(\bar x, \bar v)$. Then the anisotropic subgradient inequality means by definition that $\bar y := \bar x - \nabla \phi^*(\bar v) \in \partial_\Phi f(\bar x)$. Invoking \cref{thm:phi_subgradients} we have $f(\bar x) + f^\Phi(\bar y) = \phi(\bar x - \bar y)$ and $\bar x \in \partial_\Phi f^\Phi(\bar y)$ where the latter means by definition that $\bar y \in \argmax \{\phi(\bar x - \cdot) - f^\Phi\}$. Combined these yield
\begin{align} \label{eq:supremum_y}
\sup_{y \in \bR^n} \phi(\bar x - y) - f^\Phi(y) = \phi(\bar x - \bar y) - f^\Phi(\bar y) = f(\bar x).
\end{align}
Fenchel duality yields $\phi(\bar x - y) = \phi^{**}(\bar x - y) = \sup_{v \in \bR^n} \langle \bar x - y, v \rangle - \phi^*(v)$. Define $q(y) := \sup_{v \in \bR^n} \xi(y, v)$ for $\xi(y, v) := \langle \bar x - y, v \rangle - \phi^*(v) - f^\Phi(y)$. Then we can rewrite the supremum in \cref{eq:supremum_y} in terms of the joint supremum $f(\bar x) = \sup_{y \in \bR^n} q(y)= q(\bar y)$ and in particular $\bar y \in \argmax \,q$. We have $\bar y = \bar x - \nabla \phi^*(\bar v)$ or equivalently $\bar v = \nabla \phi(\bar x - \bar y)$ which via convex conjugacy implies that $\bar v \in \argmax \{\langle \bar x - \bar y, \cdot \rangle - \phi^*\}$. This shows that $\bar v \in \argmax \xi(\bar y, \cdot)$. Define $p(v) = \sup_{y \in \bR^n} \xi(y, v)$. Then \cite[Proposition 1.35]{RoWe98} yields that $(\bar y, \bar v) \in \argmax \xi$ as well as $\bar v \in \argmax \,p$. Overall this yields
\begin{align*}
f(\bar x) &= \sup_{y \in \bR^n} q(y) = \sup_{v\in \bR^n} p(v) = p(\bar v) \\
&= \langle \bar x, \bar v \rangle - \phi^*(\bar v) + \sup_{y \in \bR^n} \langle y, -\bar v \rangle - f^\Phi(y) = \langle \bar x, \bar v \rangle - \phi^*(\bar v) + (h^\Phi)^*(-\bar v),
\end{align*}
which combined with the fact that $\bar v \in \partial f(\bar x)$ results in
$$
f^*(\bar v) = \langle \bar x, \bar v \rangle - f(\bar x) = \phi^*(\bar v) - (f^\Phi)^*_-(\bar v) \in \bR.
$$
Since $\bar v \in \intr \dom \phi^*$ was arbitrary we have $\phi^* \mathbin{\dot{-}} f^* \equiv (f^\Phi)^*_-$ on $\intr \dom \phi^*$ and thus $\phi^* \mathbin{\dot{-}} f^*$ is convex on $\intr \dom \phi^*$.

Let $x \in \dom \phi^*$ and $x_0 \in \intr \dom \phi^*$. Then we have 
\begin{align*}
f^*(x) &\leq \liminf_{\tau \nearrow 1} f^*((1-\tau) x_0 + \tau x) \\
&\leq \lim_{\tau \nearrow 1} f^*(x_0) + \langle \nabla f^*(x_0), (1-\tau) x_0 + \tau x - x_0 \rangle + D_{\phi^*}((1-\tau) x_0 + \tau x, x_0) \\
&= f^*(x_0) + \langle \nabla f^*(x_0), x - x_0 \rangle + D_{\phi^*}(x, x_0),
\end{align*}
where the first inequality holds since $f^*$ is lsc, the second since in view of the previous part of the proof, $\phi^*-f^*$ is convex and smooth on $\intr \dom \phi^*$, and $(1-\tau) x_0 + \tau x \in \intr \dom \phi^*$ for $\tau \in [0, 1)$ by the line segment principle \cite[Theorem 2.33]{RoWe98} and the last equality holds due to \cite[Theorem 2.35]{RoWe98} and since $\phi^* \in \Gamma_0(\bR^n)$.
Since $x \in \dom \phi^*$ and $x_0 \in \intr \dom \phi^*$ the right-hand side of the inequality is finite and thus $x \in \dom f^*$.
\ifx\ifsvjour\true
\qed
\fi
\end{proof}

\ifx\ifsvjour\true
\else
\subsection{Proof of \cref{thm:phi_subgradient_gradient_correspondence}} \label{sec:thm_phi_subgradient_gradient_correspondence}
\begin{proof}
Without loss of generality assume that $L=1=\lambda$ by redefining $\phi$ as $\lambda \star \phi$ and choose $\Phi(x,y):=-\phi(x-y)$.

``\labelcref{thm:phi_subgradient_gradient_correspondence:asmooth} $\Rightarrow$ \labelcref{thm:phi_subgradient_gradient_correspondence:phi_subgrad}'': Let $\bar x \in \bR^n$. Then we have that for any $x$
$$
-h(x) \geq -\phi(x - \bar x +\nabla \phi^*(\nabla h(\bar x))) + \phi(\nabla \phi^*(\nabla h(\bar x))) -h(\bar x),
$$
which means by definition of the $\Phi$-subdifferential that $\bar x - \nabla \phi^*(\nabla h(\bar x)) \in \partial_\Phi (-h)(\bar x)$.
Now take $\bar y \in \partial_\Phi (-h)(\bar x)$. This means in view of \cref{thm:phi_subgradients} that $\bar x \in \argmin_{x \in \bR^n} \{\phi(x - \bar y) - h(x)\}$ and thus by Fermat's rule
$0 = \nabla \phi(\bar x - \bar y) - \nabla h(\bar x)$. This means that $\bar y = \bar x - \nabla \phi^*(\nabla h(\bar x))$.

``\labelcref{thm:phi_subgradient_gradient_correspondence:phi_subgrad} $\Rightarrow$ \labelcref{thm:phi_subgradient_gradient_correspondence:asmooth}'': Assume that $\partial_\Phi (-h)= \id - \nabla \phi^* \circ \nabla h$. Let $\bar x \in \bR^n$. Then $\bar x - \nabla \phi^*(\nabla h(\bar x)) \in \partial_\Phi (-h)(\bar x)$. By definition of the $\Phi$-subdifferential this means the anisotropic descent inequality holds at $\bar x$.
\end{proof}

\subsection{Proof of \cref{thm:invariance_double_min}} \label{sec:thm_invariance_double_min}
\begin{proof}
In view of \cref{thm:moreau_decomposition:smooth} the following gradient formula for the anisotropic Moreau envelope holds true:
\begin{align}
\nabla (g \infconv \lambda \star \phi_-) = \nabla \phi_- \circ \lambda^{-1}(\id - \aprox[\lambda]{\phi_-}{g}),
\end{align}
and in particular $\ran \nabla(g \infconv \lambda \star \phi_-) = \ran \nabla \phi_- \circ \lambda^{-1}(\id - \aprox[\lambda]{\phi_-}{g}) \subseteq \ran \nabla \phi_-$. Thus we can invoke \cref{thm:phi_convex_asmooth} and obtain that $g \infconv \lambda \star \phi_-$ has the anisotropic smoothness property relative $\phi_-$ with constant $\lambda^{-1}$ and in particular we have for the forward-step applied to $g \infconv \lambda \star \phi_-$ at any $y \in \bR^n$:
\begin{align*}
y - \lambda \nabla \phi_-^*(\nabla (g \infconv \lambda \star \phi_-)(y))&=y - \lambda \nabla \phi_-^*(\nabla \phi_-(\lambda^{-1}(y - \aprox[\lambda]{\phi_-}{g}(y))))\\
&= y - (y - \aprox[\lambda]{\phi_-}{g}(y)) =\aprox{\lambda \star \phi_-}{g}(y).
\end{align*}
Define $\Phi(x,y):= -\lambda \star \phi(x-y)$. Let $y \in \aprox[\lambda]{\phi}{f \infdeconv \lambda \star \phi}(x) =\argmax_{y \in \bR^n} \;\{\Phi(x,y) -(-f)^\Phi(y)\}$ be the backward-step applied to $f \infdeconv \lambda \star \phi=(-f)^\Phi$ at $x$.
By definition of the $\Phi$-subdifferential this is equivalent to $x \in \partial_\Phi (-f)^\Phi(y)$. Since $f$ is anisotropically smooth relative to $\phi$ with constant $L$, by \cref{thm:episcaling_asmoothness}, $f$ is anisotropically smooth relative to $\phi$ with constant $\lambda^{-1}\geq L$ as well. By \cref{thm:phi_convex_asmooth} this means that $-f$ is $\Phi$-convex and thus by \cref{thm:phi_subgradients} $x \in \partial_\Phi (-f)^\Phi(y)$ is equivalent to $y \in \partial_\Phi (-f)(x)$. Invoking \cref{thm:phi_subgradient_gradient_correspondence} this implies that $\aprox[\lambda]{\phi}{f \infdeconv \lambda \star \phi}(x) =\partial_\Phi (-f)(x)= (\id -\lambda \nabla \phi^* \circ \nabla f)(x) = x - \lambda \nabla \phi^*(\nabla f(x))$.

If, in addition, $f$ is convex, thanks to \cite[Theorem 2.2]{hiriart1986general} the deconvolution $f \infdeconv \lambda \star \phi = (f^* \mathbin{\dot{-}} \lambda \phi^*)^* \in \Gamma_0(\bR^n)$. Since $\ran \nabla f \subseteq \ran \nabla \phi$ we have that $\relint \dom f^* \subseteq \dom \partial f^* \subseteq \intr\dom \phi^*$. Since $\dom( f^* \mathbin{\dot{-}} \lambda \phi^*) = \dom f^*$ we have $\dom( f^* \mathbin{\dot{-}} \lambda \phi^*) \cap \intr \dom \phi^* \neq \emptyset$. Since $(f \infdeconv \lambda \star \phi)^* = (f^* \mathbin{\dot{-}} \lambda \phi^*)^{**} \leq f^* \mathbin{\dot{-}}\lambda \phi^*$ we have $\dom (f \infdeconv\lambda \star \phi)^* \supseteq \dom( f^* \mathbin{\dot{-}}\lambda \phi^*)$. This implies that $\dom (f \infdeconv \lambda \star \phi)^*\cap \intr \dom\phi^*\neq \emptyset$.
\end{proof}

\subsection{Proof of \cref{thm:strongly_convex}} \label{sec:thm_strongly_convex}
\begin{proof}
Thanks to \cref{thm:conjugate_duality_astrong} anisotropic strong convexity of $g \infconv L^{-1} \star \phi_-$ with constant $\sigma$ is implied by Bregman smoothness of $(g \infconv L^{-1} \star \phi_-)^* = g^* + L^{-1}\phi_-^*$ relative to $\sigma^{-1} \phi_-^*$, i.e., convexity of
$
\sigma^{-1} \phi_-^* \mathbin{\dot{-}} (g \infconv L^{-1} \star \phi_-)^* = (\sigma^{-1}-L) \phi_-^*  \mathbin{\dot{-}}  g^*,
$
on $\intr \dom \phi_-^*$ and $\dom \partial(g \infconv L^{-1} \star \phi_-)^* \supseteq \intr \dom \phi_-^*$.
Since $g$ is anisotropically strongly convex relative to $\phi_-$ we have $\dom \partial g^* \supseteq \intr \dom\phi_-^*$. In light of \cref{thm:conjugate_duality_astrong}, $\mu^{-1}\phi^*_- \mathbin{\dot{-}} g^*$ is convex on $\intr \dom \phi_-^*$. Thus $(\sigma^{-1}-L^{-1}) \phi_-^* \mathbin{\dot{-}} g^*$ is convex on $\intr \dom \phi_-^*$ if $\sigma^{-1}\geq L^{-1}+\mu^{-1}$, i.e., $\sigma \leq 1/(L^{-1}+\mu^{-1})$.
We also have thanks to the constraint qualification and the sum-rule and the inclusion $\dom \partial g^* \supseteq \intr \dom\phi_-^*$ that $\dom \partial(g \infconv L^{-1} \star \phi_-)^* =\dom (\partial g^* + \nabla \phi_-^*) = \dom \partial g^*\cap \dom \nabla \phi_-^* = \dom \partial g^* \supseteq \intr \dom\phi_-^*$. This proves the claimed result.
\end{proof}

\subsection{Proof of \cref{thm:linear_convergence_dual}} \label{sec:thm_linear_convergence_dual}
\begin{proof}
By \cref{thm:strongly_convex} $g \infconv \lambda \star \phi_-$ is anisotropically smooth with constant $L$ and anisotropically strongly convex with constant $1/(L^{-1}+\mu^{-1})$. By \cref{thm:invariance_double_min} $f \infdeconv \lambda \star \phi \in \Gamma_0(\bR^n)$ with $\dom(f \infdeconv \lambda \star \phi) \cap \intr \dom \phi^* \neq \emptyset$ and therefore we can invoke \cref{thm:strong_implies_pg_dominance} to show that $G$ has a unique minimizer $y^\star$ and $G=(g \infconv \lambda \star \phi_-) + (f \infdeconv \lambda \star \phi)$ satisfies the anisotropic proximal gradient dominance condition relative to $\phi_-$ with constant $1/(L^{-1}+\mu^{-1})$ and parameter $L$. 
Thanks to \cref{thm:invariance_double_min} we have for every $k \in \bN_0$ that $y^{k+1}=x^{k+1} - \lambda \nabla \phi^*(\nabla f(x^{k+1})) = \aprox[\lambda]{\phi}{f \infdeconv \lambda \star \phi}(x^{k+1})$ and $x^{k+1}=\aprox[\lambda]{\phi_-}{g}(y^k) = y^k - \nabla \phi_-^*(\nabla (g \infconv \phi_-))(y^k)$. 
In combination we have $y^{k+1}=\aprox[\lambda]{\phi}{f \infdeconv \lambda \star \phi}(y^k - \nabla \phi_-^*(\nabla (g \infconv \phi_-))(y^k))$. This is identical to \cref{alg:aproxgrad} with step-size $\lambda$, reference function $\phi_-$ and initial iterate $y^0$ applied to $G=(g\infconv \lambda \star \phi_-) + (f \infdeconv \lambda \star \phi)$.
Thus we can apply \cref{thm:linear_convergence} to the sequence $\{y^k\}_{k =0}^\infty$ to show that $\{G(y^k) \}_{k =0}^\infty$ converges $R$-linearly to $G(y^\star)$ with the claimed constant.
\end{proof}

\subsection{Proof of \cref{thm:fbe_dc}} \label{sec:thm_fbe_dc}
\begin{proof}
Let $k\in \bN_0$.
In view of \cref{thm:invariance_double_min} we have $y^k=x^k - \lambda\nabla\phi^*(\nabla f(x^k)) = \argmin_{y \in \bR^n} \{\lambda \star \phi(x^k - y) + (f \infdeconv \lambda \star \phi)(y)\}$. Since $f$ is anisotropically smooth relative to $\phi$ with constant $L$, by \cref{thm:episcaling_asmoothness}, $f$ is anisotropically smooth relative to $\phi$ with constant $\lambda^{-1}\geq L$ as well. 
Owing to \cref{thm:phi_convex_asmooth} $-f$ is $\Phi$-convex relative to $\Phi(x,y)=-\lambda \star \phi(x-y)$ and thus by \cref{thm:phi_envelope_equivalence} $f=-(-f)^{\Phi\Phi}=\inf_{y \in \bR^n} \lambda \star \phi(x^k - y) + (f \infdeconv \lambda \star \phi)(y)$ implying that
\begin{align} \label{eq:fbe_dc_1}
(f \infdeconv \lambda \star \phi)(y^k) + \lambda \star \phi(x^k - y^k)=f(x^k).
\end{align}
By definition of the forward-backward envelope since $x^k - y^k=\lambda \nabla \phi^*(\nabla f(x^k))$ we have that $F_\lambda(x^k)=(g\infconv \lambda \star \phi_-)(y^k) + f(x^k) - \lambda \star \phi(x^k - y^k)$. In combination with \cref{eq:fbe_dc_1} we obtain that
\[
F_\lambda(x^k)= (g\infconv \lambda \star \phi_-)(y^k) + (f \infdeconv \lambda \star \phi)(y^k) = G(y^k).\qedhere
\]
\end{proof}

\subsection{Proof of \cref{thm:linear_convergence_primal}} \label{sec:thm_linear_convergence_primal}
\begin{proof}
Let $k\in \bN_0$.
Since $f$ is anisotropically smooth the anisotropic descent inequality holds true which after adding $g(x^{k+1})$ to both sides of the inequality amounts to:
\begin{align} \label{eq:ineq_fbe_1}
F(x^{k+1}) &\leq f(x^k) + \lambda \star \phi(x^{k+1} - y^k) - \lambda \star \phi(\lambda\nabla \phi^*(\nabla f(x^k))) + g(x^{k+1})\notag \\
&= F_\lambda(x^k),
\end{align}
where the last equality follows by definition of $F_\lambda$ and $x^{k+1}$.
Define $\xi(x, x^k):=f(x^k) + \lambda \star \phi(x - x^k + \lambda \nabla \phi^*(\nabla f(x^k))) - \lambda \star \phi(\lambda\nabla \phi^*(\nabla f(x^k))) + g(x^{k+1})$. Then by definition $F_\lambda(x^k) = \inf_{x \in \bR^n} \xi(x, x^k)$ and $\xi(x^k, x^k) = F(x^k)$. Therefore we also have
\begin{align} \label{eq:ineq_fbe_2}
F_\lambda(x^k) \leq F(x^k).
\end{align}
Invoking \cref{thm:linear_convergence_dual} $G$ has a unique minimizer $y^\star$ and we have after $K$ iterations:
$$
G(y^{k}) - G(y^\star) \leq (1 - \tfrac\mu{\mu + L} )^{k} (G(y^0) - G(y^\star) ).
$$
By double-min duality \cref{eq:double_min} we have $G(y^\star) = \inf F$. Invoking \cref{thm:fbe_dc} we have that $F_\lambda(x^k)=G(y^k)$ for all $k \in \bN_0$.
Thanks to the identities \cref{eq:ineq_fbe_1} and \cref{eq:ineq_fbe_2} we have $F(x^{k+1}) \leq F_\lambda(x^{k})=G(y^{k})$ and $F(x^0) \geq F_\lambda(x^0)=G(y^{0})$ and thus we obtain the claimed result.
\qedhere
\end{proof}
\fi

\bibliography{references}
\bibliographystyle{abbrv.bst}

\end{document}